\theoremstyle{plain}
\newtheorem{theorem}{Theorem}[section]
\newtheorem{proposition}[theorem]{Proposition}
\newtheorem{lemma}[theorem]{Lemma}
\newtheorem{corollary}[theorem]{Corollary}
\theoremstyle{definition}
\newtheorem{definition}[theorem]{Definition}
\theoremstyle{remark}
\newtheorem{remark}[theorem]{Remark}
\numberwithin{equation}{section}
\newtheorem{conj}[theorem]{Conjecture}
\newcommand{\be}{\begin{equation}}
\newcommand{\ee}{\end{equation}}
\newcommand{\bea}{\begin{eqnarray}}
\newcommand{\eea}{\end{eqnarray}}
\newcommand{\ben}{\begin{eqnarray*}}
	\newcommand{\een}{\end{eqnarray*}}
\newcommand{\bt}{\begin{split}}
	\newcommand{\et}{\end{split}}
\newcommand{\bet}{\begin{equation}}
\newcommand{\BB}{\mathbb{B}}
\newcommand{\CC}{\mathbb{C}}
\newcommand{\DD}{\mathbb{D}}
\newcommand{\ZZ}{\mathbb{Z}}
\newcommand{\beq}{\begin{equation*}}
\newcommand{\eeq}{\end{equation*}}
\newcommand{\bal}{\begin{aligned}}
\newcommand{\eal}{\end{aligned}}
\newcommand{\ddbar}{\partial \bar{\partial}}
\newcommand{\dbar}{\bar{\partial}}
\newcommand{\calH}{{\mathcal{H}}}%
\newcommand{\calI}{{\mathcal{I}}}%
\newcommand{\calE}{{\mathcal{E}}}%
\newcommand{\calO}{{\mathcal{O}}}%
\newcommand{\supp}{{\textup{Supp}\,}}%
\newcommand{\loc}{{\textup{loc}}}%
\newcommand{\Ker}{{\textup{Ker}}}
\newcommand{\eps}{\varepsilon}%
\renewcommand{\leq}{\leqslant}%
\newcommand{\inner}[1]{\langle#1\rangle}
\begin{document}

\title[$L^2$ Extension]{An Ohsawa-Takegoshi-type $L^2$ extension for upper semi-continuous $L^2$-optimal functions}

\author{Zhuo Liu}
\address{Mathematical Science Research Center,  Chongqing University of Technology,  No. 69, Hongguang
Avenue, Banan District, Chongqing 400054, China.}
\email{liuzhuo@cqut.edu.cn; liuzhuo@amss.ac.cn}

\date{}

\begin{abstract}
In this article, we  obtain an Ohsawa-Takegoshi-type $L^2$-extension for upper semi-continuous $L^2$-optimal functions via a Lebesgue-type differentiation theorem. As applications, we give a characterization of plurisubharmonic functions via the multiple coarse $L^2$-estimate property for (strongly) upper semi-continuous functions and show that (strongly) upper semi-continuous $L^2$-optimal functions satisfy  Skoda's integrability theorem and the strong openness property.

\end{abstract}

\keywords{$L^2$-optimal, Multiplier ideal sheaf, Lebesgue differentiation, $L^2$ extension}

\subjclass[2020]{32U05, 32D15, 42B25, 14F18}

\maketitle

\section{Introduction}

Positivity concepts such as plurisubharmonicity and Griffiths/Nakano positivity are fundamental in several complex variables and complex geometry. These notions have yielded significant results including H\"ormander's $L^2$ existence theorem for the $\overline{\partial}$-equation \cite{Hor65} and the Ohsawa-Takegoshi $L^2$ extension theorem \cite{OT87}.

Recently, Deng, Ning, Wang, Zhang, and Zhou \cite{DNW21, DNWZ22, DWZZ24} developed a converse $L^2$ theory, establishing new characterizations of plurisubharmonicity and Griffiths/Nakano positivity through $L^2$ conditions related to the $\overline{\partial}$-operator.

\begin{definition}[\cite{DNW21}]
Let   $\varphi$ be  an upper semi-continuous function on  a domain $D\subset \CC^n$.  We say that $\varphi$ is \emph{$L^2$-optimal}
   if for any Stein coordinates $U\subset D$, for any smooth strictly plurisubharmonic function $\phi$ and
   any K\"{a}hler metric $\omega$ on $U$,
   the equation $\dbar u=f$ can be solved on $U$ for any $\dbar$-closed  $(n,1)$-form
   $f \in L^2_{(n,1)}(U; \loc)$
   with the estimate:
   \begin{equation}\label{eq:a1}
   \int_U|u|^2_{\omega} e^{-\varphi-\phi} dV_\omega
   \leq
   \int_U \inner{B_{\omega,\phi}^{-1}f,f}_{\omega} e^{-\varphi-\phi} dV_{\omega},
  \end{equation}
   provided that the right-hand side is finite, where $B_{\omega,\phi}:=[i\ddbar \phi, \Lambda_{\omega}]$.
\end{definition}

\begin{remark}
It is worthwhile mentioning that the integrals on both sides of \eqref{eq:a1} are independent of the choice of the K\"ahler metric $\omega$ since $u$ is an $(n,0)$-form and $f$ is an $(n,1)$-form.
\end{remark}

 It follows from H\"ormander's $L^2$ existence theorem
 that any plurisubharmonic function $\varphi$ is $L^2$-optimal. The converse was established by Deng-Ning-Wang \cite[Theorem 1.2]{DNW21}, who showed that $L^2$-optimality implies plurisubharmonicity for $C^2$-smooth functions.

 A natural question arises whether this regularity condition can be relaxed. It is well-known that plurisubharmonic functions are strongly upper semi-continuous: for any $x\in D$ and subset $S\subset D$ of measure zero,
\[
\limsup_{\substack{z\in D\setminus S \\ z\to x}} \varphi(z) = \varphi(x).
\]
This suggests the following conjecture:

\begin{conj}[{\cite[Remark 1.9]{DNW21}}]\label{conj DNW}
   Let $\varphi$ be a (strongly) upper semi-continuous function. If $\varphi$ is $L^2$-optimal, then $\varphi$ is plurisubharmonic.
\end{conj}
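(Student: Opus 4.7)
The plan is to deduce plurisubharmonicity of $\varphi$ from a pointwise sub-mean value property, by applying the Ohsawa--Takegoshi-type extension announced in the abstract to the constant function and then combining with Jensen's inequality. The Lebesgue-type differentiation theorem will be invoked to reconcile the pointwise value $\varphi(z_0)$ appearing in the extension estimate with the $L^2$-averages naturally produced by the twisted Bochner--Kodaira--Nakano argument from the $L^2$-optimality condition.

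The first step is to fix $z_0 \in D$ and a small polydisk $\Delta$ centered at $z_0$, and to apply the OT-type extension with datum $c = 1$ at $z_0$. Since the constant function $F \equiv 1$ is itself a valid holomorphic extension of $1$ from the point $\{z_0\}$, the extension estimate specializes to
\begin{equation*}
\int_\Delta e^{-\varphi}\, dV \leq C_\Delta\, e^{-\varphi(z_0)},
\end{equation*}
where the constant $C_\Delta$ satisfies $C_\Delta / |\Delta| \to 1$ as $\Delta$ shrinks to $\{z_0\}$. This is where (strong) upper semi-continuity enters: for merely usc $\varphi$, the twisted Bochner--Kodaira--Nakano computation produces an average rather than a pointwise value on the right-hand side, and the Lebesgue-type differentiation theorem is needed to identify this average with $\varphi(z_0)$ itself.

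In the second step, take logarithms and apply Jensen's inequality (using concavity of $\log$):
\begin{equation*}
-\frac{1}{|\Delta|} \int_\Delta \varphi\, dV \leq \log\frac{1}{|\Delta|}\int_\Delta e^{-\varphi}\, dV \leq \log\frac{C_\Delta}{|\Delta|} - \varphi(z_0).
\end{equation*}
Letting $\Delta$ shrink to $\{z_0\}$ yields $\varphi(z_0) \leq \liminf_{\Delta \to \{z_0\}} |\Delta|^{-1} \int_\Delta \varphi\, dV$. To upgrade this Euclidean sub-mean to the one-dimensional disk sub-mean on every complex line through $z_0$ (which characterizes psh for usc functions), I would rerun the same argument for a thin tubular neighborhood of a $1$-dimensional disk lying on a complex line $L$, using the $L^2$-optimality with an auxiliary plurisubharmonic weight $\phi$ that concentrates mass transversally to $L$. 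This yields the $1$-dimensional sub-mean for $\varphi|_L$, and hence plurisubharmonicity of $\varphi$.

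The hardest part is the first step: producing the OT-type extension with the correct pointwise constant $e^{-\varphi(z_0)}$ for merely upper semi-continuous $\varphi$. The classical twisted method gives the estimate only under continuity of the weight, and a Lebesgue-type differentiation theorem appears unavoidable in order to extract a good pointwise value from the $L^2$-framework. A secondary difficulty is the reduction to complex lines, as $L^2$-optimality of $\varphi$ on $D$ does not obviously restrict to $L^2$-optimality of $\varphi|_L$; this obstruction is what forces the design of the auxiliary weight $\phi$ described above. Once Step $1$ and its slice-wise variant are in place, the rest of the argument reduces to routine applications of Jensen's inequality and the standard characterizations of plurisubharmonicity.
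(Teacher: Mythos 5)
This statement is a \emph{conjecture} that the paper explicitly leaves open, and your proposal does not close the gap that the paper itself identifies as the obstruction. The entire weight of your argument rests on the claim in Step 1 that the Ohsawa--Takegoshi-type extension holds with a constant $C_\Delta$ satisfying $C_\Delta/|\Delta|\to 1$ as $\Delta$ shrinks to $\{z_0\}$ --- that is, the \emph{optimal} $L^2$-extension constant. But the extension theorem actually proved in the paper (Theorem \ref{Thm:OT ext}) only yields a uniform constant $C$ depending on $k$ and $\sup_{w''\in D}|w''|^2$, obtained through a chain of non-sharp estimates ($C_1,\dots,C_8$ in the proof, coming from the cut-off $\rho$, the choice of $\chi(t)=-\log(-t+\log(-t))$, and crude bounds like $e^{\chi}/\chi'\le 1/2$). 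The paper states explicitly that this constant is ``not sharp enough to settle Conjecture \ref{conj DNW}.'' You assert the optimal constant without any argument for how the twisted Bochner--Kodaira computation would produce it from mere $L^2$-optimality; for plurisubharmonic weights the optimal constant is a deep theorem (B\l{}ocki, Guan--Zhou) whose proofs use plurisubharmonicity in an essential way, which is precisely what one cannot assume here. So Step 1 is not a ``hardest part'' to be filled in later --- it is the open problem itself.

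Two further points. First, even granting $C_\Delta/|\Delta|\to 1$, your Jensen argument only yields $\varphi(z_0)\le\liminf_{\Delta\to\{z_0\}}|\Delta|^{-1}\int_\Delta\varphi$, a limiting sub-mean inequality, not the sub-mean inequality on each fixed cylinder required by Lemma \ref{Lemma:CharPsh}; the paper's successful workaround in Theorem \ref{thm DNW mod} avoids needing a sharp constant on a fixed cylinder by assuming estimates for $m\varphi$ with $\log C_m/m\to 0$ and letting $m\to\infty$, a normalization that is unavailable here because $L^2$-optimality of $\varphi$ does not imply $L^2$-optimality of $m\varphi$. Second, your concern about restricting to complex lines is the more tractable issue: the paper handles slicing via Lemma \ref{pro nak res nak} and via the single-point ($k=n$) case of the extension theorem applied on holomorphic cylinders, so no auxiliary concentrating weight is needed. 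The genuine gap is, and remains, the optimal constant.
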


Notice that  any plurisubharmonic function satisfies ``the optimal $L^2$-extension property" \cite{Blocki13, GZext} and conversely,  any upper semi-continuous function with the optimal $L^2$-extension property is plurisubharmonic \cite{DNW21}. Consequently,   we can establish Conjecture  \ref{conj DNW} by verifying this property for $L^2$-optimal functions.


 In this paper, we adapt the approach of   \cite[Theorem 1.3]{LXYZ21} with a key modification -- replacing the generalized Siu's lemma \cite{ZZsiulem} by a Lebesgue-type differentiation theorem -- to establish an Ohsawa-Takegoshi-type $L^2$ extension theorem for upper semi-continuous $L^2$-optimal functions.

\begin{theorem}\label{Thm:OT ext}
   Let $\varphi$ be an upper semi-continuous function on   a  Stein domain $U\times D\subset \CC^{n-k}_{w'}\times\CC^k_{w''}$.
	 Assume that $D$ is bounded, $\varphi$ is $L^2$ optimal and $\supp\calO/\calI(\varphi)\subset H\times D$ for some hypersurface $H\subset U$.  Then for almost all $z''\in D$, for any $f\in \calO(U\times\{z''\})$ with
$\int_{U\times\{z''\}}|f|^2e^{-\varphi}d\lambda_{n-k}(w')<+\infty,$
 there is an $F\in \calO(U\times D)$ such that $F|_{U\times\{z''\}}=f$ and
\begin{equation*}
\int_{U\times D}  \frac{|F|^2e^{-\varphi}}{ |w''-z''|^{2k}(\log |w''-z''|^2)^2} d\lambda_{n}(w)\leq C \int_{U\times\{z''\}}|f|^2e^{-\varphi} d\lambda_{n-k}(w'),
\end{equation*}
where $C$ is a uniform constant only depending on $k$ and $\sup_{w''\in D}|w''|^2$.
\end{theorem}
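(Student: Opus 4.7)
The plan is to adapt the proof of \cite[Theorem 1.3]{LXYZ21}, which builds an Ohsawa--Takegoshi extension from a weighted $\bar\partial$-estimate applied to a cut-off extension of $f$, by substituting two ingredients suited to the present setting: the $L^2$-optimality hypothesis on $\varphi$ replaces H\"ormander's estimate, and a Lebesgue-type differentiation theorem replaces the generalized Siu lemma of \cite{ZZsiulem}. The ``almost all $z''$'' in the conclusion is the price paid for relaxing plurisubharmonicity to upper semi-continuity.

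\emph{Setup and $L^2$-optimal estimate.} Fix $z''\in D$ and let $\tilde f(w',w'')=f(w')$ be the trivial extension to $U\times D$. Choose a smooth cutoff $\chi$ in $w''$ equal to $1$ near $z''$ and compactly supported, so that $\tilde F:=\chi\tilde f$ restricts to $f$ on $U\times\{z''\}$. Introduce a smooth strictly plurisubharmonic regularization $\phi_{\delta,z''}$ ($\delta\to 0$) of the Ohsawa--Takegoshi weight $k\log|w''-z''|^2+2\log(-\log|w''-z''|^2)$, normalized so that $e^{-\phi_{\delta,z''}}\nearrow|w''-z''|^{-2k}(\log|w''-z''|^2)^{-2}$, together with any K\"ahler metric $\omega$ on $U\times D$. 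Applying \eqref{eq:a1} on the Stein domain $U\times D$ to the $\bar\partial$-closed form $\bar\partial\tilde F$ produces $u_\delta$ with $\bar\partial u_\delta=\bar\partial\tilde F$ and
\[
\int_{U\times D}|u_\delta|^2_\omega e^{-\varphi-\phi_{\delta,z''}}dV_\omega\leq\int_{U\times D}\inner{B_{\omega,\phi_{\delta,z''}}^{-1}\bar\partial\tilde F,\bar\partial\tilde F}_\omega e^{-\varphi-\phi_{\delta,z''}}dV_\omega.
\]
Setting $F_\delta:=\tilde F-u_\delta\in\calO(U\times D)$, Cauchy's estimate upgrades weak $L^2$-convergence $F_\delta\rightharpoonup F$ to locally uniform convergence with $F\in\calO(U\times D)$; the singularity of the limiting weight $e^{-\phi_{z''}}$ on $\{w''=z''\}$ forces the smooth limit $u$ to vanish on $U\times\{z''\}$, hence $F|_{U\times\{z''\}}=f$.

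\emph{Estimate of the right-hand side.} By a standard computation, identical to the classical OT argument and exploiting the explicit positivity of $B_{\omega,\phi_{\delta,z''}}$ on a family of shrinking annuli $A_\delta$ about $z''$ determined by the regularization, the right-hand side of the $L^2$-optimal estimate is bounded above by
\[
\frac{C}{\mathrm{vol}(A_\delta)}\int_{A_\delta}g(w'')\,d\lambda_k(w''),\qquad g(w''):=\int_U|f(w')|^2 e^{-\varphi(w',w'')}\,d\lambda_{n-k}(w'),
\]
with $C$ depending only on $k$ and $\sup_{w''\in D}|w''|^2$. Passing $\delta\to 0$ and invoking Fatou gives
\[
\int_{U\times D}\frac{|F|^2 e^{-\varphi}}{|w''-z''|^{2k}(\log|w''-z''|^2)^2}d\lambda_n\leq C'\liminf_{\delta\to 0}\frac{1}{\mathrm{vol}(A_\delta)}\int_{A_\delta}g(w'')\,d\lambda_k(w'').
\]

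\emph{Lebesgue differentiation (main obstacle).} The crux, and the key novelty relative to \cite{LXYZ21}, is to replace the annular average on the right by $g(z'')$. Upper semi-continuity of $\varphi$ makes $e^{-\varphi(w',\cdot)}$ lower semi-continuous in $w''$, so by Fatou $g$ is lower semi-continuous on $D$. The hypothesis $\supp\calO/\calI(\varphi)\subset H\times D$ provides local integrability of $e^{-\varphi}$ off the real-codimension-two set $H\times D$, and combined with local boundedness of $|f|^2$ and boundedness of $D$, Fubini gives $g\in L^1_\loc(D)$. The classical Lebesgue differentiation theorem on $\CC^k$ then yields
\[
\lim_{\delta\to 0}\frac{1}{\mathrm{vol}(A_\delta)}\int_{A_\delta}g\,d\lambda_k=g(z'')
\]
at every Lebesgue point $z''\in D$ of $g$, hence for almost every $z''\in D$. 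The main obstacle concentrates precisely here: for merely upper semi-continuous $\varphi$ no pointwise Siu-type control is available, and Lebesgue differentiation only gives almost-everywhere convergence, which forces the ``almost all $z''$'' restriction in the conclusion; extracting $L^1_\loc$-integrability of $g$ from the multiplier-ideal support hypothesis is the necessary subsidiary step.
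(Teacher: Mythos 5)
There is a genuine gap at the step you yourself identify as the crux. You apply the classical Lebesgue differentiation theorem to the scalar function $g(w'')=\int_U|f(w')|^2e^{-\varphi(w',w'')}\,d\lambda_{n-k}(w')$, which depends on $f$; the resulting full-measure set of Lebesgue points of $g$ therefore also depends on $f$. What you obtain is ``for every $f$, for almost every $z''$ (depending on $f$) there is an extension,'' whereas the theorem asserts the reversed quantifiers: a single full-measure set of $z''$ that works simultaneously for \emph{all} $f$ with finite weighted norm on the slice. Since there are uncountably many admissible $f$, you cannot repair this by intersecting null sets. The paper's fix is exactly the refinement you skipped: it proves an $L^1(w')$-valued Lebesgue-type differentiation theorem for $e^{-\varphi}$ alone (Proposition \ref{prop Lebesgue hyperplane}), namely
\[
\lim_{\eps\to 0}\ \int_{\BB^k_{z''}(\eps)}\hspace{-3.05em}-\quad\quad\int_{V}\bigl|e^{-\varphi(w',w'')}-e^{-\varphi(w',z'')}\bigr|\,d\lambda_{n-k}(w')\,d\lambda_k(w'')=0
\]
for a.e.\ $z''$, with the absolute value \emph{inside} the $w'$-integral. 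Because this holds for a.e.\ $z''$ uniformly over all nonnegative continuous densities $P$ on $\overline V\times D$ (Lemma \ref{lem weak siu lemma}, via a compact exhaustion and a countable intersection), one may afterwards insert $P=|\hat f|^2$ for whatever $f$ is given, keeping the exceptional set independent of $f$. The weaker statement you invoke is essentially Proposition \ref{prop weak Lebesgue hyperplane}, which the paper explicitly notes is insufficient for the main theorem.

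Two secondary points. First, your claim that $g\in L^1_{\loc}(D)$ ``by Fubini'' is not available as stated: $g$ integrates over all of $U$, including a neighborhood of $H$ where $e^{-\varphi}$ need not be integrable; you need the paper's Step One, i.e.\ work on the Stein domain $(U\setminus H)\times D$ where $\calI(\varphi)=\calO$ and extend $F$ across $H\times D$ at the end using the $L^2$-negligibility of analytic sets (Lemma \ref{lem L2negligible}) together with the local upper-boundedness of $\varphi$. Second, the ``standard computation'' bounding the right-hand side is not a bare application of the $B^{-1}$-estimate with a regularized Ohsawa--Takegoshi weight: the paper runs Chen's minimal-solution device (twisting by $\chi(\phi_\eps)$ and exploiting that the minimal solution stays orthogonal to $\Ker\bar\partial$ after multiplication by $e^{\chi(\phi_\eps)}$) to absorb the solution term back into the left-hand side; without some such mechanism the annulus estimate does not close. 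These are completable along the paper's lines, but the quantifier issue in the differentiation step is the real missing idea.
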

\begin{remark}
\begin{enumerate}
  \item  Here $\calI(\varphi)$ is the multiplier ideal sheaf of $\calO$ defined by
\begin{equation*}
\calI(\varphi)_x := \{u\in \calO_x; |u|^2e^{-\varphi} \textup{ is integrable in some neighborhood of } x\}.
\end{equation*}
\item  If $k=n$,  $U$ and the condition that $\supp\calO/\calI(\varphi)\subset H\times D$  will disappear,  and $ \int_{U\times\{z''\}}|f|^2e^{-\varphi} d\lambda_{n-k}(w')$ will be replaced by $|f(z'')|^2e^{-\varphi(z'')}$.
  \item  Although the uniform constant $C$ obtained here are not sharp enough to   settle Conjecture \ref{conj DNW}, this approach provides a potential pathway towards the complete solution via refined coefficient estimates.
\end{enumerate}
\end{remark}

As our primary application, we provide a characterization of plurisubharmonic functions via the ``multiple coarse
 $L^2$-estimate property" (Definition \ref{def DNW}), which relaxes the continuity assumption in \cite[Theorem 1.5]{DNW21} to strong upper semi-continuity.

\begin{theorem}\label{thm DNW mod}
   A measurable function $\varphi:D\subset\CC^n\to [-\infty,+\infty)$ is plurisubharmonic if and only if
   $\varphi$ is strongly upper semi-continuous and satisfies the multiple coarse $L^2$-estimate property.
\end{theorem}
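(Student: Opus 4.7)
The necessity is classical: plurisubharmonic functions are strongly upper semi-continuous, and by H\"ormander's $L^2$-existence theorem any plurisubharmonic weight is $L^2$-optimal, hence a fortiori satisfies the (weaker) multiple coarse $L^2$-estimate property.

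For the converse, the plan is to rerun the argument behind Theorem \ref{Thm:OT ext} in the coarse setting. The only use of the $L^2$-optimal property there is the solvability of $\bar\partial u=f$ with the sharp bound \eqref{eq:a1}, so replacing this by the coarse version with a uniform constant $C$ makes the Lebesgue-type differentiation step go through verbatim and produces a coarse Ohsawa-Takegoshi-type extension. Specializing to $k=n$ yields, for almost every $z\in D$ and every sufficiently small ball $B\ni z$, a function $F_z\in \mathcal{O}(B)$ with $F_z(z)=1$ and
\[
\int_{B}\frac{|F_z|^2 e^{-\varphi}}{|w-z|^{2n}(\log|w-z|^2)^2}\,d\lambda_n(w)\leq C\,e^{-\varphi(z)}.
\]

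The next task is to promote this integral bound to a pointwise sub-mean value inequality for $\varphi$. Applying Jensen's inequality to the convex function $t\mapsto e^{-t}$ against the probability measure proportional to $|F_z|^2/(|w-z|^{2n}(\log|w-z|^2)^2)\,d\lambda_n$, and using that $\log|F_z|^2$ is plurisubharmonic with value $0$ at $z$, one obtains, for almost every $z$, an inequality of the form
\[
\varphi(z)\leq (\text{weighted average of }\varphi\text{ on small balls about }z)+o(1).
\]
The ``multiple'' aspect of the hypothesis -- which permits iterating the extension with higher-power weights $e^{-m\varphi}$ and taking $m\to\infty$ -- is what allows the coarse constant to be absorbed into a lower-order error and the weighted averages to be replaced by standard mean values along complex discs.

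Finally, strong upper semi-continuity transports the inequality from almost every $z$ to every $z$: if the sub-mean value inequality holds off a null set $N$, then the identity $\varphi(z)=\limsup_{z'\to z,\, z'\notin N}\varphi(z')$ upgrades it to every $z\in D$, so $\varphi$ satisfies the sub-mean value property on every complex disc and is plurisubharmonic. The main obstacle I expect is step three: controlling the compounded constants from the Lebesgue differentiation and Jensen steps tightly enough that the ``multiple'' feature of the hypothesis actually contracts the coarseness to an honest mean value inequality, rather than one off by a fixed factor -- in other words, verifying that iteration in $m$ genuinely sharpens the estimate in the limit.
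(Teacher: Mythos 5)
Your plan follows the paper's proof of Theorem \ref{thm DNW mod} almost exactly: apply the single-point ($k=n$) case of Theorem \ref{Thm:OT ext} with the weights $e^{-m\varphi}$, use Jensen's inequality together with the plurisubharmonicity of $\log|F_z|^2$ (which vanishes at $z$) to turn the $L^2$ bound into a sub-mean value inequality up to an error of order $\frac{\log C_m}{m}$, let $m\to\infty$, and finally use strong upper semi-continuity to upgrade the almost-everywhere inequality to every point (this is exactly Lemma \ref{lem susc charpsh}).

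The one point to correct is the geometry of the domains over which you average. You run the extension on balls $B\ni z$, which after Jensen yields $\varphi(z)\le\frac{1}{\lambda_n(B)}\int_B\varphi\,d\lambda_n+o(1)$; mean-value inequalities over Euclidean balls only characterize subharmonicity, not plurisubharmonicity. You acknowledge that the averages must be ``replaced by standard mean values along complex discs,'' but you attribute this replacement to the ``multiple'' feature of the hypothesis; that feature only absorbs the coarse constants $C_m$ (via $\log C_m/m\to 0$) and does not change the domain of integration. The fix, which is what the paper does, is to perform the extension directly on holomorphic cylinders $z+P$ with $P=A(\DD_r\times\BB^{n-1}_s)$ --- these are bounded Stein domains, so the $k=n$ case of Theorem \ref{Thm:OT ext} applies verbatim --- and then invoke the characterization of plurisubharmonicity by sub-mean value inequalities over all holomorphic cylinders (Lemma \ref{Lemma:CharPsh}). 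A further small detail worth recording: the full-measure set of points $z$ for which the extension exists a priori depends on $m$, so one must intersect these sets over $m\in\mathbb{N}$ before passing to the limit.
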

\begin{remark}
  If   $\varphi$ is  upper semi-continuous and satisfies the multiple coarse $L^2$-estimate property, then $\varphi$  agrees almost everywhere with a plurisubharmonic function.
\end{remark}

An important analytic invariant measuring the singularity of plurisubharmonic functions is the Lelong number $\nu(\varphi,x)$ at $x$, defined as
\begin{equation*}
\nu(\varphi,x):=\sup\{c\ge0; \varphi(z)\le c\log|z-x|+O(1) \textup{ near } x\},
\end{equation*}
which remains valid for (strongly) upper semi-continuous functions. For plurisubharmonic functions, the  celebrated Skoda  integrability theorem \cite{Skoda72} states that $\calI(\varphi)_x=\calO_x$ when $\nu(\varphi,x)<2$.

Multiplier ideal sheaves, serving as a bridge between complex analysis and algebraic geometry, possess fundamental properties including coherence, torsion-freeness, and integral closure. A significant  breakthrough in their study was Guan-Zhou's resolution \cite{GZsoc} of Demailly's strong openness conjecture \cite{Demailly-note2000, DK2001} through their innovative combination of the Ohsawa-Takegoshi extension theorem with inductive techniques.

In addition, as applications of Theorem \ref{Thm:OT ext}, we show that (strongly) upper semi-continuous $L^2$-optimal functions satisfy both  Skoda's integrability theorem and the strong openness property. We expect these results to provide useful tools for the eventual resolution of Conjecture \ref{conj DNW}.
\begin{theorem}[Skoda's integrability theorem]\label{thm Skoda}
  Let $\varphi$ be a strongly upper semi-continuous  $L^2$-optimal function on a domain $D\subset\CC^n$. Assume that $\calI(\varphi)\not\equiv 0$. If $\nu(\varphi,x)<2$ for some $x\in D$, then $\calI(\varphi)_x=\calO_x$.
\end{theorem}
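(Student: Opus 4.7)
The plan is to apply Theorem \ref{Thm:OT ext} with $k = n$ on a small Stein coordinate ball $B = B_R(x) \subset D$ with $R < 1$. In this specialization the hypothesis on $\supp \calO/\calI(\varphi)$ disappears, and for almost every $z'' \in B$ with $\varphi(z'') > -\infty$ the theorem supplies $F_{z''} \in \calO(B)$ with $F_{z''}(z'') = 1$ and
\[
\int_B \frac{|F_{z''}|^2 e^{-\varphi}}{|w - z''|^{2n}(\log|w-z''|^2)^2}\, d\lambda_n(w) \leq C\, e^{-\varphi(z'')}.
\]
Since the weight $|w-z''|^{-2n}(\log|w-z''|^2)^{-2}$ has a positive global minimum $n^2 e^2$ on $\{0 < |w-z''| < 1\}$ (attained at $|w-z''| = e^{-1/n}$), this refines to the un-weighted estimate $\int_B |F_{z''}|^2 e^{-\varphi}\, d\lambda_n \leq C'\, e^{-\varphi(z'')}$ with $C' = C/(n^2 e^2)$.

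A first consequence is that each $F_{z''}$ lies in $\calI(\varphi)(B)$ and is a unit in $\calO_{z''}$, so $\calI(\varphi)_{z''} = \calO_{z''}$ for almost every $z'' \in B$, i.e.\ $\supp\calO/\calI(\varphi) \cap B$ has Lebesgue measure zero. To promote this to $x$ itself I would select a sequence $z_j \to x$ from this full-measure set along which $e^{-\varphi(z_j)}$ is controlled, and take a normal-families limit of $\{F_{z_j}\}$. When $\varphi(x) > -\infty$, strong upper semi-continuity produces such a sequence with $\varphi(z_j) \to \varphi(x)$; the $L^2$-bound $\int_B |F_{z_j}|^2 e^{-\varphi} \leq C' e^{-\varphi(z_j)}$ is then uniform, the usc upper bound on $\varphi$ plus subharmonicity of $|F_{z_j}|^2$ yields uniform pointwise bounds on compacta, Montel extracts $F_\infty \in \calO(B)$ with $F_\infty(x) = \lim_j F_{z_j}(z_j) = 1$, and Fatou gives $\int_B |F_\infty|^2 e^{-\varphi} < \infty$; thus $F_\infty$ is a unit in $\calI(\varphi)_x$, forcing $\calI(\varphi)_x = \calO_x$.

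The main obstacle is the case $\varphi(x) = -\infty$: strong upper semi-continuity then forces $\varphi(z_j) \to -\infty$ along every full-measure sequence $z_j \to x$, and the right-hand side $C' e^{-\varphi(z_j)}$ blows up. Here the hypothesis $\nu(\varphi, x) < 2$ must be used quantitatively, via a Lebesgue-type differentiation argument on the measure $e^{-\varphi} d\lambda_n$ echoing the device underlying the proof of Theorem \ref{Thm:OT ext}, to select $z_j \to x$ with $e^{-\varphi(z_j)} \leq M|z_j - x|^{-\alpha}$ for some $\alpha < 2$. The polynomial blowup on the right is then balanced against the polynomial decay of the factor $|w - z_j|^{2n}$ in the weight denominator near $z_j$: the threshold $\alpha < 2$ supplied by $\nu(\varphi,x) < 2$ is exactly what is needed to keep the limiting integral $\int_B |F_\infty|^2 e^{-\varphi}$ finite and hence conclude $F_\infty \in \calI(\varphi)_x$ with $F_\infty(x) = 1$.
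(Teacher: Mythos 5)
Your first half is fine: specializing Theorem \ref{Thm:OT ext} to $k=n$, bounding the weight below on a small ball, and running Montel/Fatou does settle the case $\varphi(x)>-\infty$ (and correctly identifies strong upper semi-continuity as the tool for moving from the a.e.\ statement to $x$). But the theorem's entire content lies in the case $\varphi(x)=-\infty$ (e.g.\ $\varphi=\log|z|$), and there your argument has a genuine gap. With $e^{-\varphi(z_j)}\le M|z_j-x|^{-\alpha}$, $\alpha<2$, the only way to get a uniformly bounded family is to renormalize, say $G_j=e^{\varphi(z_j)/2}F_{z_j}$, so that $\int_B|G_j|^2e^{-\varphi}\le C$; but then $G_j(z_j)=e^{\varphi(z_j)/2}\to0$, so any Montel limit $G_\infty$ satisfies $G_\infty(x)=0$ and is possibly $\equiv0$. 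You obtain at best a nonzero element of $\calI(\varphi)_x$, not a unit, and no amount of ``balancing'' the polynomial rates rules out that every such limit vanishes to order $\ge 1$ at $x$ --- which is exactly what must be excluded. The quantitative information you do have, namely $|G_j(z_j)|\ge c|z_j-x|^{\alpha/2}$ with $\|G_j\|\le C$, says only that the Bergman kernel $K$ of $\calH^2(U,\varphi)$ satisfies $\nu(\log K,x)\le\alpha<2$; by itself this does not force a unit to exist.

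The missing idea, which is how the paper closes the argument (following Boucksom's presentation of Skoda's theorem), is the \emph{integrality} of the relevant Lelong number: writing $K=\sum_j|\sigma_j|^2$ for an orthonormal basis of the Bergman space, local finite generation gives $\nu(\log K,x)=2\min_j\mathrm{ord}_x\sigma_j\in 2\ZZ_{\ge0}$. The paper uses Theorem \ref{Thm:OT ext} only to get the lower bound $\log K\ge\varphi-\log C$ almost everywhere (strong upper semi-continuity upgrades the resulting Lelong-number comparison to $\nu(\log K,x)\le\nu(\varphi,x)<2$), and then the even-integer constraint jumps from ``$<2$'' to ``$=0$'', i.e.\ some $\sigma_j(x)\ne0$. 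Your approach is genuinely different in that it tries to build the unit by a direct limit of the individual extensions rather than through the reproducing kernel, but without the integrality step it cannot convert $\nu(\varphi,x)<2$ into non-vanishing at $x$; I would recommend rerouting your estimate $|G_j(z_j)|\ge c|z_j-x|^{\alpha/2}$ into the Bergman-kernel framework, where it becomes exactly the input the integrality argument needs.
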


\begin{theorem}\label{thm SOC}
  Let $\varphi$ be an upper semi-continuous  function on a domain $D\subset\CC^n$. Let $\{\varphi_j\}_{j\ge1}$ be upper semi-continuous  and $L^2$-optimal functions  increasingly converging to $\varphi$ on $D$. Assume that $\calI(\varphi_1)\not\equiv 0$. Then $\calI(\varphi)=\bigcup_j\calI(\varphi_j)$.
\end{theorem}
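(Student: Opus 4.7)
The easy inclusion $\bigcup_j\calI(\varphi_j)\subseteq\calI(\varphi)$ is immediate from $\varphi_j\leq\varphi$. For the reverse inclusion, localize at $x_0\in D$, translate so $x_0=0$, and pick $f\in\calI(\varphi)_0$, so $\int_V|f|^2e^{-\varphi}\,d\lambda_n<\infty$ on some polydisc $V\ni 0$. The aim is to produce $j$ and $V'\subset V$ with $\int_{V'}|f|^2e^{-\varphi_j}\,d\lambda_n<\infty$.

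The plan is to adapt Guan--Zhou's proof of the strong openness conjecture \cite{GZsoc}, substituting Theorem \ref{Thm:OT ext} for the classical Ohsawa--Takegoshi theorem. Applying Theorem \ref{Thm:OT ext} to the weight $\varphi_j$ extends $f|_S$ from a codimension-$k$ linear slice $S\ni 0$ back to $V$ as $F_j\in\calO(V)$ with $F_j|_S=f|_S$ and a weighted $L^2$ bound in terms of $\int_S|f|^2e^{-\varphi_j|_S}\,d\lambda_{n-k}$. By Fubini, $f|_S$ lies in the multiplier ideal of $\varphi|_S$ for generic $S$, and an inductive hypothesis in dimension $n-k$ would supply $j_0$ with $\int_S|f|^2e^{-\varphi_{j_0}|_S}\,d\lambda_{n-k}<\infty$, yielding the required control on $F_{j_0}$. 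The remainder $f-F_{j_0}$ vanishes along $S$ and factors as $s\cdot h$ with $s$ a defining function of $S$; the weight $|w''-z''|^{-2k}(\log|w''-z''|^2)^{-2}$ appearing in Theorem \ref{Thm:OT ext} provides precisely the room needed to absorb $|s|^{-2}$ and iterate on $h$, each step strictly increasing the vanishing order of the remainder along $S$. Noetherianness of $\calO_{\CC^n,0}$ terminates the iteration, with $\calI(\varphi_1)\not\equiv 0$ providing a nonzero starting germ and Theorem \ref{thm Skoda} handling the terminal step once the Lelong number drops below $2$.

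\textbf{Main obstacle:} The induction requires the restriction $\varphi_j|_S$ of an $L^2$-optimal weight to a complex slice to remain $L^2$-optimal, which is not known in this generality; this is the central difficulty and rules out a naive dimension induction. The structural hypothesis $\supp\calO/\calI(\varphi_j)\subset H\times D$ of Theorem \ref{Thm:OT ext} is likewise not automatic -- no general coherence theorem for multiplier ideals of upper semi-continuous $L^2$-optimal weights is at hand -- and must be arranged uniformly in $j$ by a careful coordinate choice so that $H$ captures the bad locus of $\varphi_j$. Reconciling these two obstructions (or, alternatively, running the entire argument at $k=n$, where both conditions disappear but one is reduced to prescribing values $f(0)$ alone and must build up higher-jet matching by a successive OT construction) is the crux of the proof.
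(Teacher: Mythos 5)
Your outline points in the right direction (dimension induction via slicing plus the Ohsawa--Takegoshi-type extension for $L^2$-optimal weights), but as written it is not a proof: the two issues you flag as unresolved obstacles are exactly the steps that have to be supplied, and the paper does supply them. First, the restriction property you call ``not known in this generality'' is Lemma \ref{pro nak res nak} (adapted from \cite[Proposition 3.3]{LXYZ21} and proved in Section \ref{Sec:Lemmas} via the Lebesgue-type differentiation theorem): under the hypothesis $\supp\calO/\calI(\varphi)\subset H\times D$, the restriction $\varphi|_{U\times\{z''\}}$ is $L^2$-optimal for almost every $z''$. Second, coherence of $\calI(\varphi_1)$ for an upper semi-continuous $L^2$-optimal weight \emph{is} at hand (Lemma \ref{lem coherent}, after Nadel and \cite{LZ24}); since $\varphi_j\geq\varphi_1$ gives $\supp\calO/\calI(\varphi_j)\subseteq\supp\calO/\calI(\varphi_1)$, the structural hypothesis is arranged \emph{uniformly in $j$} by pulling back under a modification that puts $\supp\calO/\calI(\varphi_1)$ into the normal-crossing locus $\{z_1\cdots z_n=0\}$; Lemma \ref{lem modification} shows $L^2$-optimality is preserved under such modifications and Lemma \ref{lem soc mod} shows the strong openness statement descends. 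Declaring these points to be ``the crux'' without resolving them leaves the argument incomplete.

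Beyond that, the engine of your induction differs from the paper's and is not carried out. The paper does not iterate on remainders $f-F_{j_0}$ vanishing along $S$ (your description of absorbing $|s|^{-2}$ into the weight and terminating via Noetherianity and Theorem \ref{thm Skoda} is a loose paraphrase of Guan--Zhou that you would still have to make precise, and Theorem \ref{thm Skoda} plays no role in the paper's argument). Instead it invokes Lempert's reformulation (Lemma \ref{lem lempert}): $f_0\in\bigcup_j\calI(\varphi_j)_0$ iff $\liminf_{S\ni\xi\to 0}|\xi|\cdot\|f|_{V\cap H_{k,\xi}}\|=0$ for the coordinate hyperplanes, where $\|\cdot\|$ is the infimum over $j$ of the weighted $L^2$ norms. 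With the slicing lemma in hand, Fubini applied to $\int|f|^2e^{-\varphi}<\infty$ gives $\liminf_{\xi\to0}|\xi|^2\int_{H_{k,\xi}}|f|^2e^{-\varphi}=0$ on a full-measure set, the induction hypothesis in dimension $n-1$ plus monotone convergence identifies $\|f|_{H_{k,\xi}}\|^2$ with $\int_{H_{k,\xi}}|f|^2e^{-\varphi}$, and Lempert's criterion concludes. The codimension-one case $k=1$ of Theorem \ref{Thm:OT ext} enters only through the proof of Lemma \ref{lem lempert}, not through an explicit extension-and-correction loop.
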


\begin{remark}
  Indeed, following the argument in \cite[Proposition 2.18]{LXYZ21}, we observe that  $\varphi$ itself must be $L^2$-optimal.
\end{remark}

Let $\varphi$ be an upper semi-continuous function on a domain $D\subset\mathbb{C}^n$ with $P(\varphi)\not\equiv-\infty$, where $P(\varphi)$ denotes its plurisubharmonic envelope:
\begin{equation*}
P(\varphi) := \sup\{\psi\in \text{Psh}(D) : \psi\leq\varphi\}.
\end{equation*}
When $\varphi$ is $L^2$-optimal, Lemma \ref{lem h^} implies that $\varphi+\varepsilon P(\varphi)$ remains $L^2$-optimal for any $\varepsilon>0$. Consequently, Theorem \ref{thm SOC} establishes that

\begin{corollary}[Strong openness property]
   Let $\varphi$ be an upper semi-continuous  and $L^2$-optimal function  on  a domain $D\subset\CC^n$. Assume that  $P(\varphi)\not\equiv-\infty$, then $$\calI(\varphi) =\bigcup_{\eps>0}\calI((1+\eps)\varphi).$$
\end{corollary}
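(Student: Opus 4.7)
The plan is to apply Theorem \ref{thm SOC} to the sequence $\varphi_j := \varphi + \eps_j P(\varphi)$ for a sequence $\eps_j \downarrow 0^+$, and sandwich the resulting union between the natural inclusions involving $(1+\eps)\varphi$. Because the identity $\calI(\varphi) = \bigcup_{\eps > 0}\calI((1+\eps)\varphi)$ is stalkwise, I fix $x\in D$, take a relatively compact Stein neighborhood $V$ of $x$, and subtract a constant so that $\varphi \le 0$ on $V$; this leaves $\calI(\varphi)$ and $\calI((1+\eps)\varphi)$ unchanged (both ideals are invariant under additive constants), and since $P(\varphi-c)=P(\varphi)-c$ it is compatible with the construction of $\varphi_j$. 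The normalization also yields $P(\varphi) \le \varphi \le 0$ on $V$.

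Each $\varphi_j$ is upper semi-continuous as the sum of an upper semi-continuous function and a plurisubharmonic function, and is $L^2$-optimal by Lemma \ref{lem h^}. With $P(\varphi) \le 0$ on $V$, the sequence $\eps_j P(\varphi)$ increases to $0$, so $\varphi_j \uparrow \varphi$ pointwise.

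The step I expect to be most delicate is verifying the nontriviality hypothesis $\calI(\varphi_1) \not\equiv 0$ of Theorem \ref{thm SOC}. To bypass the absence of a direct comparison between $\varphi_1$ and $\varphi$, I would use the lower bound $\varphi_1 \ge (1+\eps_1)P(\varphi)$, which follows from $\varphi \ge P(\varphi)$ and gives $\calI((1+\eps_1)P(\varphi)) \subset \calI(\varphi_1)$. Since $P(\varphi)$ is a nontrivial plurisubharmonic function, Siu's semi-continuity theorem shows that $\{\nu(P(\varphi),\cdot) \ge c\}$ is a proper analytic subset of $V$ for every $c > 0$; at any point $y$ outside the countable union of these, $\nu(P(\varphi),y) = 0$, and the classical Skoda integrability theorem yields $\calO_y = \calI((1+\eps_1)P(\varphi))_y \subset \calI(\varphi_1)_y$, establishing $\calI(\varphi_1) \not\equiv 0$ on $V$.

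Applying Theorem \ref{thm SOC} to $\{\varphi_j\}$ gives $\calI(\varphi) = \bigcup_j \calI(\varphi_j)$. The inequality $\varphi_j \le (1+\eps_j)\varphi$, which follows from $P(\varphi) \le \varphi$, yields $\calI(\varphi_j) \subset \calI((1+\eps_j)\varphi)$; conversely, the local normalization $\varphi \le 0$ on $V$ gives $(1+\eps)\varphi \le \varphi$ there, hence $\calI((1+\eps)\varphi)_x \subset \calI(\varphi)_x$. Chaining,
\[
\calI(\varphi)_x = \bigcup_j \calI(\varphi_j)_x \subset \bigcup_{\eps > 0}\calI((1+\eps)\varphi)_x \subset \calI(\varphi)_x,
\]
so all inclusions are equalities at every $x \in D$, which gives the desired equality of sheaves.
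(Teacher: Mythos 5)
Your argument is correct and is precisely the route the paper intends: the corollary is obtained by applying Theorem \ref{thm SOC} to $\varphi_j=\varphi+\eps_j P(\varphi)$ (upper semi-continuous, and $L^2$-optimal by Lemma \ref{lem h^}), and your verification of $\calI(\varphi_1)\not\equiv 0$ via $\varphi_1\ge(1+\eps_1)P(\varphi)$ together with the sandwich $\varphi_j\le(1+\eps_j)\varphi\le\varphi$ after the local normalization $\varphi\le 0$ supplies exactly the details the paper leaves implicit. The one pinprick --- shared with the paper's own sketch --- is that $\varphi_j\uparrow\varphi$ \emph{pointwise} fails on the pluripolar set $\{P(\varphi)=-\infty\}$, where $\varphi_j\equiv-\infty$ while $\varphi$ may be finite; since that set has Lebesgue measure zero and the proof of Theorem \ref{thm SOC} only uses the $\varphi_j$ through integrals (monotone convergence), almost-everywhere increasing convergence suffices, but this should be said.
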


The remaining parts of this article are organized as follows. Section \ref{Sec:Lebesgue} presents a Lebesgue-type differentiation theorem; Section \ref{Sec:Lemmas} presents some properties of $L^2$-optimal functions; Section \ref{Sec:Ext} presents a proof of  Theorem \ref{Thm:OT ext};  Section \ref{Sec:Appl} presents a characterization of plurisubharmonic functions, Skoda's integrability theorem and the strong openness property for (strongly) upper semi-continuous $L^2$-optimal functions.

\textbf{Acknowledgement:}
The authors would like to thank Dr. Wang Xu for his helpful discussions.

\section{Preliminaries} \label{Sec:Pre}

In this paper, $d\lambda_n$ denotes the Lebesgue measure of $\mathbb{C}^n$. For any $z\in\CC^n$ and $r>0$, we denote $\mathbb{B}^n_z(r):=\{w\in\mathbb{C}^n;|w-z|<r\}$ and $\DD^n_z(r):=\{w\in\CC^n;|w_k-z_k|<r, \text{ for }1\le k\le n\}$.

\subsection{Lebesgue-type differentiation theorem}\label{Sec:Lebesgue}

\begin{lemma}[Vitali covering lemma, {\cite[Theorem 1.2]{Hein01}}]\label{lem covering}
  Every family $\mathcal{F}$ of balls of uniformly bounded diameter in $\CC^n$ contains a disjointed subfamily $\mathcal{G}$, which means that no two balls in $\mathcal{G}$ meet, such that
  $$\bigcup_{B\in\mathcal{F}}B\subset\bigcup_{B\in\mathcal{G}} 5B.$$
\end{lemma}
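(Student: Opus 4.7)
The plan is a greedy construction on balls grouped by size. First I would set $R := \sup_{B\in\mathcal{F}}\mathrm{diam}(B)$, which is finite by hypothesis, and partition $\mathcal{F}$ into dyadic layers
$$\mathcal{F}_j := \bigl\{B\in\mathcal{F} : R/2^{j+1} < \mathrm{diam}(B) \le R/2^j\bigr\}, \qquad j\ge 0,$$
so that $\mathcal{F} = \bigsqcup_{j\ge 0}\mathcal{F}_j$.

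Next I would build $\mathcal{G}$ inductively over $j$. Let $\mathcal{G}_0$ be a maximal (by inclusion) disjointed subfamily of $\mathcal{F}_0$; such a family exists by Zorn's lemma. Having chosen $\mathcal{G}_0,\dots,\mathcal{G}_{j-1}$, let $\mathcal{G}_j$ be a maximal disjointed subfamily of the collection of balls in $\mathcal{F}_j$ that avoid every ball in $\mathcal{G}_0\cup\cdots\cup\mathcal{G}_{j-1}$. The candidate family is $\mathcal{G}:=\bigcup_{j\ge 0}\mathcal{G}_j$, and by construction no two balls of $\mathcal{G}$ meet.

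To check the covering property, I would fix $B\in\mathcal{F}$ with radius $r$ and locate the unique $j$ with $B\in\mathcal{F}_j$. The maximality of $\mathcal{G}_j$ forces $B$ to meet some $B'\in\mathcal{G}_0\cup\cdots\cup\mathcal{G}_j$: either $B$ is itself selected (take $B'=B$), or else $B$ could be appended to $\mathcal{G}_j$ without destroying disjointedness, contradicting maximality. Let $r'$ denote the radius of $B'$ and $i\le j$ the index with $B'\in\mathcal{F}_i$. From the layer definitions, $r\le R/2^{j+1}$ while $r' > R/2^{i+2}\ge R/2^{j+2}\ge r/2$. Since $B\cap B'\ne\emptyset$, for any $x\in B$ the triangle inequality gives $|x-\mathrm{center}(B')|\le 2r+r'<5r'$, so $B\subset 5B'$.

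The crux will be pinning down the constant $5$, which is precisely what the dyadic partitioning is designed for: the inequality $r'>r/2$ is the hinge that converts $2r+r'$ into $5r'$, and any coarser grouping of balls would either yield a worse constant or fail to guarantee the size comparison. The appeal to Zorn's lemma at each inductive stage is harmless for uncountable $\mathcal{F}$ and can be replaced by transfinite recursion if one prefers.
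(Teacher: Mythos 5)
Your argument is correct and complete: the dyadic partition by diameter, the greedy choice of maximal disjointed subfamilies via Zorn's lemma, and the size comparison $r'>r/2$ yielding $2r+r'<5r'$ together give exactly the $5B$ covering. The paper does not prove this lemma but merely cites it from Heinonen, and your proof is the standard argument given there, so there is nothing to reconcile.
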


\begin{theorem}[Lebesgue differentiation theorem]
  Let $f\in L^1(\CC^n;\loc)$, then for almost all $z\in\CC^n$, $$\lim_{\eps\to 0}\frac{1}{\lambda_n(\BB^n_z(\eps))}\int_{\BB^n_z(\eps)}
  |f(w)-f(z)|d\lambda_n(w)=0.$$
\end{theorem}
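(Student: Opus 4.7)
The plan is to follow the classical route via the Hardy–Littlewood maximal function, with Lemma \ref{lem covering} supplying the covering step. Since the conclusion is local, it suffices to prove it on each ball $\BB^n_0(R)$ assuming $f \in L^1(\CC^n)$ with compact support; indeed, for $z\in \BB^n_0(R)$ and small $\eps$ the averages depend only on the values of $f$ on a fixed larger ball, so replacing $f$ by $f\cdot \one_{\BB^n_0(R+1)}$ changes nothing.

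First I would define the centered maximal function
\begin{equation*}
Mf(z) := \sup_{\eps > 0}\frac{1}{\lambda_n(\BB^n_z(\eps))}\int_{\BB^n_z(\eps)} |f(w)|\,d\lambda_n(w)
\end{equation*}
and establish the weak-type $(1,1)$ inequality $\lambda_n(\{Mf>\alpha\}) \leq C_n\alpha^{-1}\|f\|_{L^1}$. This is where the Vitali covering lemma enters: given a compact $K\subset\{Mf>\alpha\}$, each $z\in K$ sits at the center of a ball $B_z$ with $\int_{B_z}|f|\,d\lambda_n > \alpha\,\lambda_n(B_z)$; since $f$ is integrable these balls have uniformly bounded radii, so Lemma \ref{lem covering} extracts a disjoint subfamily $\{B_{z_j}\}$ whose $5$-dilates cover $K$, yielding
\begin{equation*}
\lambda_n(K) \leq 5^{2n}\sum_j \lambda_n(B_{z_j}) \leq \frac{5^{2n}}{\alpha}\sum_j\int_{B_{z_j}}|f|\,d\lambda_n \leq \frac{5^{2n}}{\alpha}\|f\|_{L^1},
\end{equation*}
and inner regularity of $\lambda_n$ delivers the bound on the full superlevel set.

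Next I would exploit density of $C_c(\CC^n)$ in $L^1(\CC^n)$: for $\delta>0$ choose continuous $g$ with $\|f-g\|_{L^1}<\delta$. For continuous $g$ the averages $\frac{1}{\lambda_n(\BB^n_z(\eps))}\int_{\BB^n_z(\eps)}|g(w)-g(z)|\,d\lambda_n(w)$ tend to $0$ at every $z$ by uniform continuity on any neighborhood of $z$. Setting $h:=f-g$ and splitting $|f(w)-f(z)|\leq |g(w)-g(z)|+|h(w)|+|h(z)|$, one obtains
\begin{equation*}
\limsup_{\eps\to 0}\frac{1}{\lambda_n(\BB^n_z(\eps))}\int_{\BB^n_z(\eps)}|f(w)-f(z)|\,d\lambda_n(w) \leq Mh(z)+|h(z)|
\end{equation*}
for every $z$ at which $f(z)$ is defined. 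For each $\alpha>0$ the set where this limsup exceeds $2\alpha$ is then contained in $\{Mh>\alpha\}\cup\{|h|>\alpha\}$, which by the weak-type bound and Chebyshev's inequality has measure at most $(C_n+1)\delta/\alpha$. Letting $\delta\to 0$ shows this set has measure zero, and intersecting over a countable sequence $\alpha_k\to 0$ completes the proof.

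There is no real obstacle beyond assembling this standard machinery; the only step requiring genuine input is the weak-type inequality, for which Lemma \ref{lem covering} is tailor-made, and the passage from compactly supported $f$ to $f\in L^1(\CC^n;\loc)$ is purely a localization.
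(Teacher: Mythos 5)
Your proof is correct and follows exactly the route the paper itself uses (for its hyperplane variant, Proposition \ref{prop Lebesgue hyperplane}): the Vitali covering lemma gives the weak-type $(1,1)$ bound for the maximal function, and density of $C_c^0$ in $L^1$ plus the splitting $|f(w)-f(z)|\leq|g(w)-g(z)|+|h(w)|+|h(z)|$ shows the exceptional set has measure zero. The paper states the classical theorem without proof, but your argument is precisely the standard machinery it assembles in Section \ref{Sec:Lebesgue}, so there is nothing to add.
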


Let $f\in L^1(\CC^n)$. It follows from  Fubini's theorem that the function $$F(z''):=\int_{\CC^{n-k}}f(z',z'')d\lambda_{n-k}(z')$$ is locally integrable on $\CC^k$. Then by the Lebesgue differentiation theorem, we have

\begin{proposition}\label{prop weak Lebesgue hyperplane}
  Let $f\in L^1(\CC^n)$, then for almost all $z''\in\CC^k$, $$\lim_{\eps\to 0} \int_{\BB^k_{z''}(\eps)}\hspace{-3.05em}-\quad\quad
  \left|\int_{\CC^{n-k}}f(w',w'')-f(w',z'')d\lambda_{n-k}(w')\right|
  d\lambda_k(w'')=0.$$
\end{proposition}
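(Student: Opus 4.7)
The plan is to reduce this proposition directly to the classical Lebesgue differentiation theorem stated just above, applied to the slice-integrated function
$$F(z'') := \int_{\CC^{n-k}} f(w', z'') \, d\lambda_{n-k}(w').$$
By Fubini's theorem, $f \in L^1(\CC^n)$ implies $F \in L^1(\CC^k)$; in particular $F$ is locally integrable on $\CC^k$, as the paragraph preceding the proposition already records.

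First I would use linearity of the Lebesgue integral in the $w'$ variable to rewrite the inner integrand as
$$\int_{\CC^{n-k}} \bigl( f(w', w'') - f(w', z'') \bigr) \, d\lambda_{n-k}(w') \;=\; F(w'') - F(z''),$$
so the expression whose limit is being taken becomes the usual averaged $L^1$-oscillation
$$\frac{1}{\lambda_k(\BB^k_{z''}(\eps))} \int_{\BB^k_{z''}(\eps)} \bigl| F(w'') - F(z'') \bigr| \, d\lambda_k(w'').$$
Then I would invoke the Lebesgue differentiation theorem for $F \in L^1(\CC^k;\loc)$ to produce a null set $N \subset \CC^k$ such that this quantity tends to $0$ as $\eps \to 0$ for every $z'' \in \CC^k \setminus N$, which is precisely the claim.

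There is essentially no serious obstacle: the only content is the trivial identification of the inner integral with $F(w'') - F(z'')$, after which the statement is literally the Lebesgue differentiation theorem applied to $F$. The one small bookkeeping point I would record is that Fubini defines $F(z'')$ only for $z''$ outside some null set on which $f(\cdot, z'') \notin L^1(\CC^{n-k})$; this causes no trouble, since we can enlarge $N$ by that set and the conclusion is itself only claimed almost everywhere in $z''$.
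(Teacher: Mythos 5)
Your argument is correct and is exactly the paper's: the paper derives this proposition by noting (via Fubini) that $F(z'')=\int_{\CC^{n-k}}f(w',z'')\,d\lambda_{n-k}(w')$ is locally integrable on $\CC^k$ and then applying the Lebesgue differentiation theorem to $F$, which is precisely your reduction since the absolute value sits outside the inner integral. Your bookkeeping remark about the null set where $f(\cdot,z'')$ fails to be integrable is a harmless and correct addition.
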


\begin{remark}
  The above result, while informative, is insufficient for our main theorems. By refining the approach used in proving the Lebesgue differentiation theorem, we establish a more powerful version (Proposition \ref{prop Lebesgue hyperplane}) adequate for our needs.
\end{remark}

\begin{definition}
  Let $f\in L^1(\CC^n)$. Define the maximal function $$Mf(z''):=\sup_{\eps> 0} \int_{\BB^k_{z''}(\eps)}\hspace{-3.05em}-\quad\quad
  \int_{\CC^{n-k}}|f(w',w'')|d\lambda_{n-k}(w')
  d\lambda_k(w'')$$
\end{definition}

\begin{proposition}
   Let $f\in L^1(\CC^n)$, then the maximal function $Mf$ is lower semi-continuous on $\CC^k$.
\end{proposition}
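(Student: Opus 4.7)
The plan is to exhibit $Mf$ as a standard Hardy--Littlewood maximal function of a single $L^1$ function and then argue lower semi-continuity from the supremum of continuous functions. By Fubini, the function
\[
F(w''):=\int_{\CC^{n-k}}|f(w',w'')|\,d\lambda_{n-k}(w')
\]
belongs to $L^1(\CC^k)$. Setting
\[
A(z'',\eps):=\frac{1}{\lambda_k(\BB^k_{z''}(\eps))}\int_{\BB^k_{z''}(\eps)}F(w'')\,d\lambda_k(w''),
\]
we have $Mf(z'')=\sup_{\eps>0}A(z'',\eps)$. I would prove that $A$ is \emph{jointly continuous} on $\CC^k\times(0,+\infty)$; once this is established, the lower semi-continuity of $Mf$ is immediate, since the pointwise supremum of any family of continuous functions is automatically lower semi-continuous.

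To verify continuity of $A$, the denominator $\lambda_k(\BB^k_{z''}(\eps))=c_k\eps^{2k}$ is continuous and nonvanishing on $(0,+\infty)$, so only the numerator requires attention. I would rewrite it as $\int_{\CC^k}\one_{\BB^k_{z''}(\eps)}(w'')\,F(w'')\,d\lambda_k(w'')$ and invoke the dominated convergence theorem. Given a sequence $(z_j'',\eps_j)\to(z_0'',\eps_0)$ with $\eps_0>0$, the indicators $\one_{\BB^k_{z_j''}(\eps_j)}$ are uniformly bounded by $1$ and converge pointwise to $\one_{\BB^k_{z_0''}(\eps_0)}$ off the sphere $\{w''\in\CC^k:|w''-z_0''|=\eps_0\}$, a Lebesgue-null set; the function $F\in L^1(\CC^k)$ provides the integrable majorant. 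This yields continuity of the numerator and hence of $A$.

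No genuine obstacle arises. The only point needing mild care is the pointwise convergence of the indicators away from the limit sphere, which follows from the elementary fact that the strict inequality $|w''-z''|<\eps$ is stable under small perturbations of $(z'',\eps)$ whenever $|w''-z_0''|\neq\eps_0$. A slightly more direct alternative would be to fix $z_0''$ with $Mf(z_0'')>\alpha$, choose $\eps_0>0$ with $A(z_0'',\eps_0)>\alpha$, and observe that for $z''$ close enough to $z_0''$ one has $\BB^k_{z_0''}(\eps_0)\subset\BB^k_{z''}(\eps_0+|z''-z_0''|)$, so that the corresponding averages at $z''$ converge to $A(z_0'',\eps_0)$ and thus exceed $\alpha$; but routing the argument through joint continuity of $A$ is the cleanest formulation.
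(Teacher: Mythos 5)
Your proof is correct, and your main route differs from the paper's. The paper argues directly on the superlevel set $E_t=\{Mf>t\}$: given $z_0''\in E_t$ it picks $\eps$ with the average over $\BB^k_{z_0''}(\eps)$ exceeding $t$, then a slightly larger radius $\eps'>\eps$ for which the same integral divided by $\lambda_k(\BB^k_{z_0''}(\eps'))$ still exceeds $t$, and uses the inclusion $\BB^k_{z_0''}(\eps)\subset\BB^k_{z''}(\eps')$ for $|z''-z_0''|<\eps'-\eps$ together with nonnegativity of the integrand to conclude $Mf(z'')>t$ on a neighborhood. This is exactly the ``slightly more direct alternative'' you sketch in your closing paragraph, and it needs nothing beyond monotonicity of the integral. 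Your primary argument instead establishes the stronger statement that the averaging operator $A(z'',\eps)$ of $F(w''):=\int_{\CC^{n-k}}|f(w',w'')|\,d\lambda_{n-k}(w')$ is jointly continuous on $\CC^k\times(0,+\infty)$ via dominated convergence (the indicators converge off a null sphere, with $F\in L^1$ as majorant), and then invokes the fact that a pointwise supremum of continuous functions is lower semi-continuous. Both arguments are complete; yours buys the joint continuity of the averages as a reusable intermediate fact at the cost of an appeal to dominated convergence, while the paper's is more elementary and avoids any limit interchange. Your reduction to the single function $F$ on $\CC^k$ also makes transparent that the statement is just lower semi-continuity of the classical Hardy--Littlewood maximal function, which the paper leaves implicit.
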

\begin{proof}
It follows from Fubini's theorem that $Mf$ is finite almost everywhere.
  For any $t\in(0,+\infty)$, it suffices to show that $E_t:=\{z''\in \CC^k; Mf(z'')>t\}$ is open. Fix $z''_0\in E_t$, then there is $\eps>0$ such that $$ \int_{\BB^k_{z''_0}(\eps)}\hspace{-3.05em}-\quad\quad\int_{
  \CC^{n-k}}
   |f(w',w'')|d\lambda_{n-k}(w')d\lambda_k(w'')>t.$$
   Take $\eps'>\eps$ with $$ \frac{1}{\lambda_k(\BB^k_{z''_0}(\eps'))}\int_{\BB^k_{z''_0}(\eps)}\int_{
  \CC^{n-k}}
   |f(w',w'')|d\lambda_{n-k}(w')d\lambda_k(w'')>t,$$ then for any $z''\in \CC$ with $|z''-z''_0|<\eps'-\eps$, we have $\BB^k_{z''_0}(\eps)\subset\BB^k_{z''}(\eps')$ and \begin{align*}
     t <&\frac{1}{\lambda_k(\BB^k_{z''_0}(\eps'))}\int_{\BB^k_{z''_0}(\eps)}\int_{
  \CC^{n-k}}
   |f(w',w'')|d\lambda_{n-k}(w')d\lambda_k(w'')\\
      \le&  \int_{\BB^k_{z''}(\eps')}\hspace{-3.3em}-\quad\quad\ \int_{
  \CC^{n-k}}
   |f(w',w'')|d\lambda_{n-k}(w')d\lambda_k(w'')\le Mf(z'').
   \end{align*}
   Hence $\BB^k_{z''_0}(\eps'-\eps)\subset E_t$ and we complete the proof.
   \end{proof}

\begin{theorem}[Hardy-Littlewood theorem]
  Let $f\in L^1(\CC^n)$, then for any $t\in(0,+\infty)$, $$\lambda_k(\{z''\in\CC^k; Mf(z'')>t\})\le 5^{2k}t^{-1}\|f\|_{L^1}.$$
\end{theorem}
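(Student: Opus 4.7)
The plan is to reduce this to the classical $L^1$ weak-type estimate for the Hardy-Littlewood maximal function on $\CC^k \cong \RR^{2k}$, using the integrated function
\[
F(z'') := \int_{\CC^{n-k}} |f(w',z'')|\, d\lambda_{n-k}(w'),
\]
which, by Fubini's theorem, lies in $L^1(\CC^k)$ with $\|F\|_{L^1} = \|f\|_{L^1}$. With this reduction, $Mf(z'')$ is simply the centered maximal function of $F$ on $\CC^k$, and the conclusion is the standard Hardy-Littlewood $(1,1)$-inequality. So the actual task is to execute the Vitali covering argument carefully.

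First I would fix $t>0$ and consider the open set $E_t = \{Mf>t\}$ (openness was just established in the preceding proposition, so $E_t$ is measurable). For each $z'' \in E_t$, by the definition of the supremum there is some $\eps_{z''} > 0$ with
\[
\int_{\BB^k_{z''}(\eps_{z''})} F(w'')\, d\lambda_k(w'') \;>\; t \cdot \lambda_k\!\left(\BB^k_{z''}(\eps_{z''})\right).
\]
The crucial a priori bound is that $\lambda_k(\BB^k_{z''}(\eps_{z''})) < t^{-1} \|F\|_{L^1} = t^{-1}\|f\|_{L^1}$, which forces the radii $\eps_{z''}$ to be uniformly bounded. This is exactly the hypothesis required to invoke the Vitali covering lemma (Lemma \ref{lem covering}).

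Next I would restrict attention to an arbitrary compact $K \subset E_t$ (using inner regularity of Lebesgue measure, which suffices since $E_t$ is open). The family $\{\BB^k_{z''}(\eps_{z''}) : z'' \in K\}$ is an open cover of $K$ with uniformly bounded diameters. Applying Lemma \ref{lem covering} yields a pairwise disjoint subfamily $\mathcal{G}$ with $K \subset \bigcup_{B \in \mathcal{G}} 5B$. Since $\CC^k$ has real dimension $2k$, $\lambda_k(5B) = 5^{2k} \lambda_k(B)$, and disjointness combined with the defining inequality gives
\[
\lambda_k(K) \;\le\; \sum_{B \in \mathcal{G}} 5^{2k}\lambda_k(B) \;<\; 5^{2k} t^{-1} \sum_{B \in \mathcal{G}} \int_B F\, d\lambda_k \;\le\; 5^{2k} t^{-1} \|F\|_{L^1} \;=\; 5^{2k} t^{-1}\|f\|_{L^1}.
\]
Taking the supremum over compact $K \subset E_t$ yields the desired bound.

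I do not expect any genuine obstacle; this is a textbook argument. The only subtle points are (i) ensuring the radii are uniformly bounded so that Lemma \ref{lem covering} applies, which is handled by the trivial bound above, and (ii) the passage from compact subsets to $E_t$ itself, which is immediate once openness is known. One could alternatively note that $\mathcal{G}$ may be chosen countable (by disjointness and uniform boundedness), allowing a direct estimate on $E_t$ without passing through compact exhaustion, but the inner regularity version is cleaner to write.
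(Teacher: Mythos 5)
Your proof is correct and follows essentially the same route as the paper: reduce to a compact subset $K$ of the open set $E_t$, cover it by balls on which the average of $F$ exceeds $t$, apply the Vitali covering lemma to extract a disjoint subfamily, and sum using the dilation factor $5^{2k}$. The only cosmetic difference is that the paper first passes to a finite subcover by compactness before invoking Lemma \ref{lem covering}, whereas you apply the lemma directly to the full family after noting the a priori uniform bound on the radii; both are valid.
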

\begin{proof}
  Fix $t>0$ and denote $E_t:=\{z''\in\CC^k; Mf(z'')>t\}$.  It suffices to show that  any compact subset $K$ of $E_t$ satisfies $$\lambda_k(K)\le 5^{2k}t^{-1}\|f\|_{L^1}.$$

  For $z''\in K$, there is open disk $\BB_{z''}(\eps_{z''})$ such that  $$ \int_{\BB^{k}_{z''}(\eps_{z''})}\hspace{-3.9em}-\quad\quad\quad\int_{
  \CC^{n-k}}
   |f(w',w'')|d\lambda_{n-k}(w')d\lambda_k(w'')>t.$$
   Since $K$ is compact, then there are finite open balls $\{\BB_{z''_j}(\eps_{z''_j}); 1\le j\le N\}$ covering $K$. It follows from Lemma \ref{lem covering} that there are mutually disjoint open balls $\BB_{z''_{j_\ell}}(\eps_{z''_{j_\ell}}), 1\le \ell\le M (M\le N)$   such that $\{\BB_{z''_{j_\ell}}(5\eps_{z''_{j_\ell}}); 1\le \ell\le M\}$ covering $K$. Hence we have
   \begin{align*}
     \lambda_k(K) \le& 5^{2k}\sum_{\ell=1}^{M}\lambda_k(\BB_{z''_{j_\ell}}(\eps_{z''_{j_\ell}})) \\
      \le & 5^{2k}t^{-1} \sum_{j=1}^{M}\int_{\BB_{z''_{j_\ell}}(\eps_{z''_{j_\ell}})}\int_{
  \CC^{n-k}}
   |f(w',w'')|d\lambda_{n-k}(w')d\lambda_k(w'')\\
   \le& 5^{2k}t^{-1} \|f\|_{L^1}.
   \end{align*}
\end{proof}

\begin{proposition}\label{prop f*}
 Let $f\in L^1(\CC^n)$ and  define $$f^*(z''):=\limsup_{\eps\to 0} \int_{\BB^k_{z''}(\eps)}\hspace{-3.05em}-\quad\quad
  \int_{\CC^{n-k}}|f(w',w'')-f(w',z'')|d\lambda_{n-k}(w')
  d\lambda_k(w'').$$ Then we have
  \begin{enumerate}
    \item[(i)] If $f\in C^0_c(\CC^n)$, then $f^*=0$.
    \item[(ii)] If $g\in L^1(\CC^n)$ with $g^*=0$, then $f^*=(f-g)^*$.
    \item[(iii)] $\lambda_k(\{z''\in\CC^k;f^*(z'')>t\})\le 2(5^{2k}+1)t^{-1}\|f\|_{L^1}.$
  \end{enumerate}
\end{proposition}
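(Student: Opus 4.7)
The plan is to verify the three parts in order. Parts (i) and (ii) are elementary, and (iii) is the main estimate, proved by reducing the weak-$L^1$ bound for $f^*$ to the Hardy-Littlewood bound for $Mf$ combined with Chebyshev's inequality applied to a slice integral.

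For (i), since $f \in C^0_c(\CC^n)$ has compact support $K$, its projection $K' \subset \CC^{n-k}$ is also compact, and uniform continuity of $f$ provides, for any $\delta > 0$, some $\eta > 0$ with $|f(w', w'') - f(w', z'')| < \delta$ for all $w'$ whenever $|w'' - z''| < \eta$. Since both slices vanish outside $K'$,
\[
\int_{\CC^{n-k}} |f(w', w'') - f(w', z'')| \, d\lambda_{n-k}(w') \le \delta \cdot \lambda_{n-k}(K')
\]
for $|w''-z''| < \eta$, and this bound is preserved by averaging over $w'' \in \BB^k_{z''}(\eps)$ with $\eps < \eta$. Letting $\delta \to 0$ gives $f^*(z'') = 0$. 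Part (ii) is then immediate from the triangle inequality
\[
|f(w',w'') - f(w',z'')| \le |(f-g)(w',w'') - (f-g)(w',z'')| + |g(w',w'') - g(w',z'')|,
\]
which after integration, averaging, and taking $\limsup$ gives $f^* \le (f-g)^* + g^* = (f-g)^*$; replacing $f$ by $f-g$ and using $(-g)^* = g^* = 0$ yields the reverse inequality.

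For (iii), the essential estimate is
\[
f^*(z'') \le Mf(z'') + F(z''), \qquad F(z'') := \int_{\CC^{n-k}} |f(w', z'')| \, d\lambda_{n-k}(w'),
\]
which follows by applying
\[
\int_{\CC^{n-k}} |f(w', w'') - f(w', z'')| \, d\lambda_{n-k}(w') \le \int_{\CC^{n-k}} |f(w', w'')| \, d\lambda_{n-k}(w') + F(z''),
\]
averaging in $w''$ over $\BB^k_{z''}(\eps)$, and taking the $\limsup$ as $\eps \to 0$. Consequently $\{f^* > t\} \subset \{Mf > t/2\} \cup \{F > t/2\}$, so by the Hardy-Littlewood theorem one has $\lambda_k(\{Mf > t/2\}) \le 2 \cdot 5^{2k} t^{-1} \|f\|_{L^1}$, and by Chebyshev's inequality together with Fubini's theorem (giving $\|F\|_{L^1(\CC^k)} = \|f\|_{L^1(\CC^n)}$) one has $\lambda_k(\{F > t/2\}) \le 2 t^{-1} \|f\|_{L^1}$. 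Summing yields the claimed bound $2(5^{2k}+1) t^{-1} \|f\|_{L^1}$. There is no serious obstacle; the whole argument is a standard adaptation of the classical derivation of the Lebesgue differentiation theorem from the Hardy-Littlewood maximal inequality, the only twist being that differentiation in $z''$ is coupled with an independent full integration in $w'$.
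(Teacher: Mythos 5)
Your proposal is correct and follows essentially the same route as the paper: uniform continuity plus compact support for (i), the two-sided triangle inequality for (ii), and the pointwise bound $f^*\le Mf+F$ combined with the Hardy--Littlewood theorem and Chebyshev's inequality (with $\|F\|_{L^1(\CC^k)}=\|f\|_{L^1(\CC^n)}$ by Fubini) for (iii). No gaps; the constants match the claimed $2(5^{2k}+1)t^{-1}\|f\|_{L^1}$.
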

  \begin{proof}
  \begin{enumerate}
    \item[(i)] Since $f\in C^0_c(\CC^n)$, then $\supp(f)\subset \BB^{n-k}_0(R)\times \BB^{k}_0(R)$ for some $R>0$.  In addition,  for any $\delta>0$, there is $r>0$ such that $$|f(w',w'')-f(w',z'')|<\delta$$ as long as $|w''-z''|<r$.  Hence for $\eps<r$, we have \begin{align*}
                                                 \int_{\BB^k_{z''}(\eps)}\hspace{-3.05em}-\quad\quad
  \int_{\CC^{n-k}}|f(w',w'')-f(w',z'')|d\lambda_{n-k}(w')
  d\lambda_k(w'')
                                              \le &  \lambda_{n-k}(\BB^{n-k}_0(R))\delta.
                                            \end{align*}  As a result, $f^*=0$.
    \item[(ii)] By the  triangle inequality, we have  \begin{align*}
               (f-g)^* \le& f^*+g^*= f^* = (f-g+g)^*\le (f-g)^*+g^*=(f-g)^*.
             \end{align*}
    \item[(iii)] Since \begin{align*}
                         f^*(z'') \le & \sup_{\eps> 0} \int_{\BB^k_{z''}(\eps)}\hspace{-3.05em}-\quad\quad
  \int_{\CC^{n-k}}|f(w',w'')-f(w',z'')|d\lambda_{n-k}(w')
  d\lambda_k(w'') \\
                         \le & Mf(z'')+ \int_{\CC^{n-k}}|f(w',z'')|d\lambda_{n-k}(w').
                       \end{align*}
                       Note that  $F(z''):=\int_{\CC^{n-k}}|f(w',z'')|d\lambda_{n-k}(w')$ is integrable on $\CC^k$, then by
                       Chebyshev's inequality, we have $$\lambda_k(\{z''\in\CC^k;F(z'')>t\})\le t^{-1}\|F\|_{L^1}=t^{-1}\|f\|_{L^1}.$$
      Therefore, \begin{align*}
               & \lambda_k(\{z''\in\CC^k;f^*(z'')>t\})\\ \le    & \lambda_k(\{z''\in\CC^k;Mf^*(z'')>\frac{t}{2}\})+\lambda_k(\{z''\in\CC^k;F(z'')>\frac{t}{2}\})\\
                   \le & 2(5^{2k}+1)t^{-1}\|f\|_{L^1}.
                 \end{align*}
  \end{enumerate}

  \end{proof}

Now we can strength the Proposition \ref{prop weak Lebesgue hyperplane} as follows.

\begin{proposition}[Lebesgue-type differentiation theorem]\label{prop Lebesgue hyperplane}
  Let $f\in L^1(\CC^n)$, then for almost all $z''\in\CC^k$, $$\int_{\CC^{n-k}}| f(w',z'')|d\lambda_{n-k}(w')<+\infty$$ and $$\lim_{\eps\to 0} \int_{\BB^k_{z''}(\eps)}\hspace{-3.05em}-\quad\quad
  \int_{\CC^{n-k}}|f(w',w'')-f(w',z'')|d\lambda_{n-k}(w')
  d\lambda_k(w'')=0.$$
\end{proposition}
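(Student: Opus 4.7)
The proof proposal mirrors the classical derivation of the Lebesgue differentiation theorem from the Hardy--Littlewood maximal inequality, but working in the ``slice'' framework already developed.

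First I would dispose of the integrability statement~(a). By Fubini's theorem, $F(z''):=\int_{\CC^{n-k}}|f(w',z'')|\,d\lambda_{n-k}(w')$ is well-defined and finite for almost every $z''\in\CC^k$ and belongs to $L^1(\CC^k)$. So on a full-measure set in $\CC^k$ the first assertion is automatic, and the slice $f(\cdot,z'')$ lies in $L^1(\CC^{n-k})$.

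For~(b), the plan is to show $f^*(z'')=0$ for a.e.\ $z''$, where $f^*$ is the auxiliary function from Proposition~\ref{prop f*}. Fix $t>0$ and $\delta>0$. Choose $g\in C^0_c(\CC^n)$ with $\|f-g\|_{L^1}<\delta$ (standard density of continuous compactly supported functions in $L^1$). By Proposition~\ref{prop f*}(i) we have $g^*\equiv 0$, and Proposition~\ref{prop f*}(ii) then gives $f^*=(f-g)^*$. Applying Proposition~\ref{prop f*}(iii) to $f-g$ yields
\begin{equation*}
\lambda_k\bigl(\{z''\in\CC^k; f^*(z'')>t\}\bigr)=\lambda_k\bigl(\{(f-g)^*>t\}\bigr)\le 2(5^{2k}+1)t^{-1}\|f-g\|_{L^1}\le 2(5^{2k}+1)t^{-1}\delta.
\end{equation*}
Letting $\delta\to 0$ shows $\lambda_k(\{f^*>t\})=0$. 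Taking a countable sequence $t=1/m\to 0$ and taking a union of null sets yields $f^*(z'')=0$ for almost every $z''\in\CC^k$, which is precisely the desired vanishing of the limit superior, and hence the limit equals zero.

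Combining the two steps, on the intersection of the two full-measure sets both conclusions hold simultaneously, completing the proof. I do not expect a substantive obstacle: all the hard work -- the Vitali covering lemma, the weak-type $(1,1)$ bound for the slice maximal operator, and the three properties of $f^*$ -- is already packaged in the preceding lemmas and Proposition~\ref{prop f*}. The only nontrivial external input is the density of $C^0_c(\CC^n)$ in $L^1(\CC^n)$, which is standard.
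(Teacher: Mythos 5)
Your proposal is correct and follows essentially the same route as the paper: approximate $f$ in $L^1$ by $g\in C^0_c(\CC^n)$, use parts (i)--(iii) of Proposition \ref{prop f*} to conclude $\lambda_k(\{f^*>t\})\le 2(5^{2k}+1)t^{-1}\delta$, and let $\delta\to 0$. Your explicit treatment of the integrability assertion via Fubini is a harmless (and welcome) addition that the paper leaves implicit.
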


\begin{remark}
  Let $U\subset \CC^{n-k}$, $D\subset\CC^k$ and $f\in L^1(U\times D)$. Consider
  \begin{equation*}
   \tilde{f}(z):=\left\{\begin{aligned}
    &f(z), &z\in U\times D;\\
    &0,    &z\notin U\times D.
  \end{aligned}\right.
    \end{equation*}
Then $\tilde{f}\in L^1(\CC^n)$ and Proposition \ref{prop Lebesgue hyperplane} implies that for almost all $z''\in D$, $$\lim_{\eps\to 0} \int_{\BB^k_{z''}(\eps)}\hspace{-3.05em}-\quad\quad
  \int_{U}|f(w',w'')-f(w',z'')|d\lambda_{n-k}(w')
  d\lambda_k(w'')=0.$$

\end{remark}

\begin{proof}
It suffices to show that $f^*=0$ almost everywhere.
 Since $f\in L^1(\CC^n)$, for any $\delta>0$, we can find a function $g_\delta\in C^0_c(\CC^n)$ such that $\|f-g_\delta\|_{L^1}\le\delta.$ Then it follows from Proposition \ref{prop f*} that
 \begin{align*}
   \lambda_k(\{z''\in\CC^k;f^*(z'')>t\})= &\lambda_k(\{z''\in\CC^k;(f-g_\delta)^*(z'')>t\})  \\
   \le & 2(5^{2k}+1)t^{-1}\|f-g_\delta\|_{L^1} \\
    \le &  2(5^{2k}+1)t^{-1}\delta.
 \end{align*}
 Since $\delta$ is arbitrary, we get $\lambda_k(\{z''\in\CC^k;f^*(z'')>t\})=0$ for any $t>0$. We complete the proof.

\end{proof}

\subsection{Some properties of $L^2$-optimal functions}\label{Sec:Lemmas}

In this section, we will recall and prove some basic properties of $L^2$-optimal functions.

\begin{lemma}[\cite{Nadel, LZ24}]\label{lem coherent}
Let $D$ be a domain in $\CC^n$ and $\varphi$ be an upper semi-continuous function on $D$.
  Assume that $(D,\varphi)$ is $L^2$-optimal, then $\calI(\varphi)$   is coherent. In particular, the support set of the quotient sheaf $\mathcal{O}/\mathcal{I}(\varphi)$ is an analytic subset of $D$.
\end{lemma}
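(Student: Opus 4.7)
The plan is to adapt Nadel's classical proof of coherence for multiplier ideal sheaves, with the $L^2$-optimal condition playing the role of H\"ormander's $L^2$-existence theorem. By the strong Noetherian property of coherent analytic sheaves, it suffices to establish the following local generation statement: for every $x_0 \in D$ there exist a Stein neighborhood $V$ of $x_0$ with $\overline{V} \subset D$ and a countable family $\{F_j\}_{j \ge 1} \subset \calI(\varphi)(V)$ whose germs at every $x \in V$ generate the stalk $\calI(\varphi)_x$.

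To produce such a family, I would fix a smooth strictly plurisubharmonic $\phi$ in a neighborhood of $\overline{V}$ and set
$$H := \left\{F \in \calO(V) : \int_V |F|^2 e^{-\varphi - \phi}\, d\lambda_n < +\infty\right\},$$
a separable Hilbert space; take $\{F_j\}$ to be an orthonormal basis, each of which automatically lies in $\calI(\varphi)(V)$. Fix $x \in V$ and $g \in \calI(\varphi)_x$; the goal is to show that $g \in \langle F_j \rangle_x + \frakm_x^{N+1}$ for every $N \ge 0$, after which Krull's intersection theorem applied to $\calO_x / \langle F_j \rangle_x$ yields $g \in \langle F_j \rangle_x$. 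The construction: pick a smooth cutoff $\chi$ equal to $1$ in a small ball around $x$, form the $\dbar$-closed $(n,1)$-form $f := \dbar(\chi g\, dz_1 \wedge \cdots \wedge dz_n)$, and choose a smooth strictly plurisubharmonic weight $\tilde\phi$ on $V$ whose complex Hessian dominates that of $\phi + (N+n)\log(|z-x|^2 + \eps)$ for some small $\eps > 0$. Applying the $L^2$-optimal property with weight $\varphi + \tilde\phi$ produces $u$ with $\dbar u = f$ and a finite $L^2$-bound. Since $u$ is holomorphic near $x$, since $\varphi$ is locally bounded above, and since the weight forces $\int |u|^2 (|z-x|^2 + \eps)^{-(N+n)}\, d\lambda_n < +\infty$ near $x$, letting $\eps \to 0$ and passing to a weak limit forces $u$ to vanish to order $\ge N+1$ at $x$. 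Then $\tilde F := \chi g - u \in H$ has germ at $x$ equal to $g$ modulo $\frakm_x^{N+1}$; expanding $\tilde F$ in the orthonormal basis $\{F_j\}$ and truncating the sum using that jets of order $N$ at $x$ span only a finite-dimensional subspace, one finds an element of $\langle F_j \rangle_x$ with the same $N$-jet at $x$ as $\tilde F$, finishing the step.

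The main technical obstacle is the smoothness requirement in the definition of $L^2$-optimality: the singular function $\log|z-x|^2$ cannot be used directly, so one works with the smooth regularizations $\log(|z-x|^2 + \eps)$ and passes to the limit $\eps \to 0$. Ensuring that the $L^2$-estimate is uniform in $\eps$, and that the limiting $u$ still vanishes to the required order, is the delicate step; it is handled by weak compactness in $L^2(e^{-\varphi-\phi})$ together with the local boundedness from above of $\varphi$ (which makes $e^{-\varphi}$ locally bounded below). Once local generation is proved, the coherence of $\calI(\varphi)$ follows, and the analyticity of $\supp \calO/\calI(\varphi)$ is then an immediate consequence of Cartan's coherence theory.
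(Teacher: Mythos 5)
The paper does not prove this lemma itself but cites Nadel and [LZ24], and your argument is precisely the standard Nadel--Demailly coherence proof adapted to the $L^2$-optimal setting via the regularized weights $\log(|z-x|^2+\eps)$, which is exactly what those references do; the proof is correct. One small point of wording: you should take $\tilde\phi=\phi+(N+n)\log(|z-x|^2+\eps)$ itself (smooth and strictly plurisubharmonic since $\phi$ is and the logarithmic term is plurisubharmonic), because a weight that merely \emph{dominates the complex Hessian} of this function need not satisfy the pointwise upper bound $\tilde\phi\le \phi+(N+n)\log(|z-x|^2+\eps)+O(1)$ near $x$ that is actually needed to force the limit $u$ to vanish to order $N+1$ there.
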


\begin{remark}\label{rmk coherent}
  If moreover that $\calI(\varphi)\not\equiv 0$, then $\supp( \mathcal{O}/\mathcal{I}(\varphi))$ is a proper analytic subset of $D$. which means that $e^{-\varphi}$ is locally integrable outside a proper analytic subset. In particular, $\varphi>-\infty$ almost everywhere.
\end{remark}

\begin{lemma}[{\cite[Proposition 2.14]{LXYZ21}}]\label{lem h^}
    Let $\varphi, \varphi_j$ be upper semi-continuous functions on a holomorphic vector bundle $E$.
    Assume that $\{\varphi_j\}$ are $L^2$-optimal and decreasingly converge to $\varphi$.
    Then $\varphi$ is also $L^2$-optimal.
    Especially, $\varphi+\phi$ is $L^2$-optimal for any plurisubharmonic function $\phi$.
\end{lemma}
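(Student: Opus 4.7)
The plan is to combine the $L^2$-optimality of each $\varphi_j$ with weak compactness in weighted $L^2$ spaces and a two-step limiting argument (lower semi-continuity, then monotone convergence). Fix Stein coordinates $U$, a smooth strictly plurisubharmonic function $\phi$ on $U$, a K\"ahler metric $\omega$, and a $\dbar$-closed $(n,1)$-form $f$ on $U$ with
\begin{equation*}
M := \int_U \langle B_{\omega,\phi}^{-1}f,f\rangle_\omega\, e^{-\varphi-\phi}\, dV_\omega < +\infty.
\end{equation*}
Since $\{\varphi_j\}$ decreases to $\varphi$, we have $\varphi_j \geq \varphi$ and hence $e^{-\varphi_j-\phi} \leq e^{-\varphi-\phi}$, so the corresponding right-hand side with weight $\varphi_j$ is at most $M$. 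Applying the $L^2$-optimality of $\varphi_j$ produces solutions $u_j$ to $\dbar u_j = f$ satisfying $\int_U |u_j|^2_\omega e^{-\varphi_j-\phi}\, dV_\omega \leq M$.

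Next, for each fixed $k$ and every $j \geq k$, the inequality $\varphi_j \leq \varphi_k$ gives $e^{-\varphi_k-\phi} \leq e^{-\varphi_j-\phi}$, hence $\int_U |u_j|^2_\omega e^{-\varphi_k-\phi}\, dV_\omega \leq M$. Thus $\{u_j\}_{j \geq k}$ is bounded in the Hilbert space $L^2_{(n,0)}(U, e^{-\varphi_k-\phi}dV_\omega)$, and a diagonal extraction yields a single subsequence $\{u_{j_\ell}\}$ converging weakly in each such space, for every $k$, to a common $(n,0)$-form $u$ (the weak limits in different weighted spaces coincide, as they all agree as distributional limits of $\{u_{j_\ell}\}$ on compact subsets of $U$, using that $\varphi_1$ is locally bounded above). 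Lower semi-continuity of the weighted norm under weak convergence gives, for every $k$,
\begin{equation*}
\int_U |u|^2_\omega e^{-\varphi_k-\phi}\, dV_\omega \leq \liminf_{\ell\to\infty}\int_U |u_{j_\ell}|^2_\omega e^{-\varphi_{j_\ell}-\phi}\, dV_\omega \leq M,
\end{equation*}
and letting $k\to\infty$ with monotone convergence (as $e^{-\varphi_k-\phi}\nearrow e^{-\varphi-\phi}$) produces $\int_U |u|^2_\omega e^{-\varphi-\phi}\, dV_\omega \leq M$. Since $\dbar$ is continuous under weak $L^2$-limits, $\dbar u = f$ distributionally, so $\varphi$ is $L^2$-optimal.

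For the ``especially'' part, let $\psi$ be plurisubharmonic; we want $\varphi+\psi$ to be $L^2$-optimal. First suppose $\psi$ is smooth. Then for any smooth strictly plurisubharmonic weight $\phi$, the sum $\phi+\psi$ is again smooth strictly plurisubharmonic, and $i\ddbar\psi\geq 0$ yields $B_{\omega,\phi+\psi}\geq B_{\omega,\phi}$, hence $B_{\omega,\phi+\psi}^{-1}\leq B_{\omega,\phi}^{-1}$. Applying the $L^2$-optimality of $\varphi$ with weight $\phi+\psi$ provides a solution satisfying
\begin{equation*}
\int_U |u|^2_\omega e^{-(\varphi+\psi)-\phi}\, dV_\omega \leq \int_U \langle B_{\omega,\phi+\psi}^{-1}f,f\rangle_\omega\, e^{-(\varphi+\psi)-\phi}\, dV_\omega \leq \int_U \langle B_{\omega,\phi}^{-1}f,f\rangle_\omega\, e^{-(\varphi+\psi)-\phi}\, dV_\omega,
\end{equation*}
so $\varphi+\psi$ is $L^2$-optimal. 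For a general plurisubharmonic $\psi$, I would locally approximate by a decreasing sequence of smooth plurisubharmonic $\psi_j$ (standard mollification on Stein coordinate charts), so $\varphi+\psi_j$ is $L^2$-optimal and decreases to $\varphi+\psi$, and the first part of the lemma closes the argument. The main subtlety is the diagonal weak-compactness step and the identification of weak limits across the differently weighted Hilbert spaces; once this is handled, the two-step limiting procedure is routine.
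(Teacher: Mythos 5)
The paper does not prove this lemma; it is quoted verbatim from \cite[Proposition 2.14]{LXYZ21}, so there is no in-paper argument to compare against. Your proof is correct and is the standard one for such statements: solve with the weights $\varphi_j$, observe the uniform bound $M$ coming from $\varphi_j\geq\varphi$, extract a diagonal weak limit in the nested weighted spaces $L^2(e^{-\varphi_k-\phi})$ (the identification of the limits across weights via distributional convergence, using that each $\varphi_k$ is locally bounded above, is exactly the right justification), and finish with lower semi-continuity of the norm plus monotone convergence; the ``especially'' part via $B_{\omega,\phi+\psi}^{-1}\leq B_{\omega,\phi}^{-1}$ for smooth $\psi$ and then regularization is also the expected route. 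The only point you gloss over is that mollification produces the decreasing smooth $\psi_j$ only on relatively compact subsets of a chart, so for a Stein coordinate $U$ that is not relatively compact you need to solve on a Stein exhaustion $U_m\Subset U$ with the uniform bound $M$ and pass to one more weak limit; this is routine and uses precisely the compactness machinery you already set up in the first half.
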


It is well-known that any complex analytic subset is $L^2$-negligible in the sense of the following lemma.

\begin{lemma}[{\cite[Chapter VIII-(7.3)]{D12a}}]\label{lem L2negligible}
  Let $D$ be an open subset of $\CC^n$ and $Z$  a complex analytic subset of $D$.
Assume that $u$ is  a $(p, q-1)$-form with $L^2_\loc$
coefficients and $f$ a $(p, q)$-form with $L^1_\loc$ coefficients
such that $\dbar u=f$ on $D\setminus Z$ (in the sense of distribution theory). Then $\dbar u=f$ on $D$.
\end{lemma}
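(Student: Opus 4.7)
My plan is to verify the distributional identity $\int_D f \wedge \eta = (-1)^{p+q}\int_D u \wedge \dbar\eta$ for every smooth compactly supported test form $\eta$ of bidegree $(n-p, n-q)$, by replacing $\eta$ with $\chi_\eps \eta$ for a family of cutoffs $\chi_\eps$ supported away from $Z$, and then passing to the limit $\eps \to 0$. This is the standard removable-singularity scheme for $\dbar$ across an analytic set.

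Since the conclusion is local, I would first work near a fixed point of $Z$ and, after shrinking $D$, arrange that $Z \subset \{g=0\}$ for some nontrivial holomorphic function $g$ on $D$; this only weakens the hypothesis, which continues to hold on the smaller open set $D \setminus \{g=0\}$. Fix a smooth function $\chi : \RR \to [0,1]$ with $\chi \equiv 0$ on $(-\infty,1]$ and $\chi \equiv 1$ on $[2,\infty)$, and set $\chi_\eps(z) := \chi(|g(z)|^2/\eps)$. Then $\chi_\eps \in C^\infty(D)$ vanishes in a neighborhood of $\{g=0\}$, satisfies $\chi_\eps \to 1$ pointwise off $\{g=0\}$, and $\chi_\eps \eta$ is a test form with compact support in $D \setminus Z$. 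Applying the hypothesis to $\chi_\eps \eta$ and expanding $\dbar(\chi_\eps \eta) = \chi_\eps\,\dbar\eta + \dbar\chi_\eps \wedge \eta$ gives
\[
\int_D \chi_\eps f \wedge \eta \;=\; (-1)^{p+q}\int_D \chi_\eps\, u \wedge \dbar\eta \;+\; (-1)^{p+q}\int_D u \wedge \dbar\chi_\eps \wedge \eta.
\]
As $\eps \to 0^+$, the two $\chi_\eps$-weighted integrals converge to $\int_D f \wedge \eta$ and $(-1)^{p+q}\int_D u \wedge \dbar\eta$, respectively, by dominated convergence (using that $u$ and $f$ are $L^1_\loc$ on the compact support of $\eta$). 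Everything therefore reduces to showing that the remainder $R_\eps := \int_D u \wedge \dbar\chi_\eps \wedge \eta$ tends to zero.

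The hard part will be this error estimate, and it is the one place where the $L^2_\loc$ hypothesis on $u$ is decisive. A direct computation gives $\dbar\chi_\eps = \eps^{-1}\chi'(|g|^2/\eps)\, g\, d\bar g$, supported in the shell $A_\eps := \{\eps \leq |g|^2 \leq 2\eps\}$, where its pointwise size is of order $\eps^{-1/2}$; however, balancing this bound against the Lebesgue measure of $A_\eps$ (via a coarea computation, or stratifying by the vanishing order of $g$) shows that $\|\dbar\chi_\eps\|_{L^2(\supp\eta)}$ stays bounded uniformly in $\eps$. Cauchy-Schwarz then yields
\[
|R_\eps| \;\leq\; C\,\|\eta\|_{L^\infty}\,\|u\|_{L^2(A_\eps \cap \supp\eta)}\,\|\dbar\chi_\eps\|_{L^2(\supp\eta)},
\]
and since $A_\eps \cap \supp\eta$ shrinks to the null set $\{g=0\} \cap \supp\eta$, absolute continuity of the $L^2$-norm of $u$ forces $\|u\|_{L^2(A_\eps \cap \supp\eta)} \to 0$; hence $R_\eps \to 0$ and the proof concludes. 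A weaker hypothesis $u \in L^1_\loc$ would not suffice, since one cannot arrange $\|\dbar\chi_\eps\|_{L^\infty}$ to be bounded; the $L^2$ regularity is precisely what lets Cauchy-Schwarz absorb the $\eps^{-1/2}$ pointwise blow-up against the vanishing measure of the support. The assumption on $f$ can be relaxed to $L^1_\loc$ because no derivative falls on that side.
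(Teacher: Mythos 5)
Your argument is correct and is essentially the standard proof of this removable-singularity lemma; the paper does not prove it but cites Demailly (Chapter VIII-(7.3)), whose proof uses exactly this cutoff-and-Cauchy--Schwarz scheme. One caution on the crux step: the literal ``pointwise size $\eps^{-1/2}$ times Lebesgue measure of $A_\eps$'' balance fails when $g$ vanishes to order $\geq 2$ (for $g=z_1^2$ one has $\lambda(A_\eps)\sim\eps^{1/2}$, not $O(\eps)$), so the uniform bound on $\|\dbar\chi_\eps\|_{L^2}$ really must be obtained the way your parenthetical suggests, by keeping $|\partial g|^2$ inside the integral: from $\dbar\chi_\eps=\eps^{-1}\chi'(|g|^2/\eps)\,g\,\overline{\partial g}$ one gets $\int_{A_\eps\cap K}|\dbar\chi_\eps|^2\lesssim\eps^{-1}\int_{\{|g|^2\leq2\eps\}\cap K}|\partial g|^2$, and the coarea formula converts the last integral into $\int_{|w|^2\leq2\eps}\mathcal{H}^{2n-2}\bigl(g^{-1}(w)\cap K\bigr)\,d\lambda(w)=O(\eps)$, using the locally uniform area bound for the fibers of $g$. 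With that estimate made explicit (or, alternatively, after reducing to a smooth point of the hypersurface where $g=z_1$ in suitable coordinates, or using a $\log\log$-type cutoff), the proof is complete.
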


\begin{lemma}[{\cite[(5.8) Proposition]{D12b}}]\label{lem dem mod}
  Let $\pi:X'\to X$ be a proper holomorphic modification between two complex manifolds of dimension $n$. Let $\varphi$ be an upper semi-continuous function on $X$. Then $\pi_*(K_{X'}\otimes\calI(\pi^*\varphi))=K_X\otimes\calI(\varphi)$.
\end{lemma}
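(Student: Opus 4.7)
The plan is to verify the sheaf equality by comparing sections over an arbitrary open $U \subset X$. Since $\pi$ is a proper holomorphic modification, there is an analytic subset $E \subset X'$ (the exceptional locus) such that $\pi$ restricts to a biholomorphism $X' \setminus E \to X \setminus A$, where $A := \pi(E)$ is a proper analytic subset of $X$. Unwinding the definitions, a section of $\pi_*(K_{X'} \otimes \calI(\pi^*\varphi))$ over $U$ is a holomorphic $n$-form $F$ on $\pi^{-1}(U)$ with $|F|^2 e^{-\pi^*\varphi} \in L^1_{\loc}(\pi^{-1}(U))$, while a section of $K_X \otimes \calI(\varphi)$ over $U$ is a holomorphic $n$-form $f$ on $U$ with $|f|^2 e^{-\varphi} \in L^1_{\loc}(U)$. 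I would then construct mutually inverse maps between these two sets of sections.

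The easy direction sends $f$ to $\pi^* f$. For every $V \Subset U$, the identity $\pi^*(c_n f \wedge \overline{f}) = c_n \pi^* f \wedge \overline{\pi^* f}$, together with the fact that $E$ and $A$ have Lebesgue measure zero, produces
\begin{equation*}
\int_{\pi^{-1}(V)} |\pi^* f|^2 e^{-\pi^*\varphi}\, dV_{X'} = \int_{V} |f|^2 e^{-\varphi}\, dV_X,
\end{equation*}
so $\pi^* f$ satisfies the required $L^2$ bound and hence lies in the left-hand sheaf.

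The reverse direction is where the real work sits. Given $F$, the biholomorphism $\pi|_{X' \setminus E}$ yields a holomorphic $n$-form $f$ on $U \setminus A$ characterized by $\pi^* f = F$ on $\pi^{-1}(U) \setminus E$, and the same change-of-variables identity gives $|f|^2 e^{-\varphi} \in L^1_{\loc}(U \setminus A)$. Since $\varphi$ is upper semi-continuous, it is locally bounded above, so $e^{-\varphi}$ is locally bounded below on $U$, whence the coefficient of $f$ is itself locally $L^2$ on $U \setminus A$. The main step is then to extend $f$ holomorphically across the proper analytic subset $A$: along components of $A$ of codimension $\geq 2$ this follows from Hartogs' extension, while along hypersurface components it follows from the $L^2$ Riemann-type removable singularity theorem, since a local Laurent expansion in a defining coordinate would force any nontrivial polar term to violate the local $L^2$ bound. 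After extension, $|f|^2 e^{-\varphi} \in L^1_{\loc}(U)$ is automatic because $A$ has measure zero, giving $f \in (K_X \otimes \calI(\varphi))(U)$.

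Finally, the two assignments are inverse to each other on the dense open sets $X \setminus A$ and $X' \setminus E$, and any section of either sheaf is uniquely determined by its restriction to such a dense open, so they agree globally. The main obstacle is precisely the $L^2$ removable singularity step across $A$; everything else is a direct application of the change-of-variables formula for top-degree forms under the biholomorphism $\pi|_{X' \setminus E}$.
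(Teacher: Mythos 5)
Your argument is correct, and it is essentially the standard proof of this result; the paper itself does not prove the lemma but simply cites Demailly's Proposition (5.8), whose proof proceeds exactly as you do: the change-of-variables identity for top-degree forms off the measure-zero exceptional sets handles both inclusions, and the only substantive point is the $L^2$ Riemann-type removable-singularity extension of the pushed-forward form across $A$, which you justify correctly using the local upper bound on $\varphi$.
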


\begin{lemma}\label{lem soc mod}
   Let $\pi:X'\to X$ be a proper holomorphic modification between two complex manifolds of dimension $n$. Let $\varphi$ be an upper semi-continuous function on $X$. If  $\calI(\pi^*\varphi)$ has strong openness property, then $\calI(\varphi)$  has strong openness property.
\end{lemma}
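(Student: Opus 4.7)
The strong openness property asserts the sheaf equality $\calI(\psi)=\bigcup_{\eps>0}\calI((1+\eps)\psi)$. The inclusion $\bigcup_{\eps>0}\calI((1+\eps)\varphi)\subset\calI(\varphi)$ is automatic: upper semi-continuity gives local boundedness above, so after subtracting a local constant (which leaves both $\calI(\varphi)$ and $\calI((1+\eps)\varphi)$ unchanged) one may assume $\varphi\le 0$, whence $(1+\eps)\varphi\le\varphi$ and the inclusion follows from the monotonicity of multiplier ideals in the weight. Accordingly, I concentrate on the reverse inclusion, which I test stalkwise.

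Fix $x\in X$ and a germ $f\in\calI(\varphi)_x$; the goal is to produce $\eps>0$ with $f\in\calI((1+\eps)\varphi)_x$. The plan is to lift the question to $X'$ via the push-forward identity $\pi_*(K_{X'}\otimes\calI(\pi^*\varphi))=K_X\otimes\calI(\varphi)$ of Lemma \ref{lem dem mod}, apply the hypothesis there, and return to $X$. First I choose a coordinate chart around $x$ on which $K_X$ is trivialized by a nonvanishing holomorphic $n$-form $\omega$; then $f\omega$ is a section of $K_X\otimes\calI(\varphi)$ near $x$, and by Lemma \ref{lem dem mod} it corresponds to a section $\pi^*(f\omega)$ of $K_{X'}\otimes\calI(\pi^*\varphi)$ over $\pi^{-1}(U)$ for some neighborhood $U$ of $x$. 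For each $y\in\pi^{-1}(x)$, the strong openness hypothesis for $\calI(\pi^*\varphi)$ at $y$ produces some $\eps_y>0$ and an open neighborhood $V_y$ of $y$ on which $\pi^*(f\omega)$ is a section of $K_{X'}\otimes\calI((1+\eps_y)\pi^*\varphi)$; here I use the trivial identity $\pi^*((1+\eps)\varphi)=(1+\eps)\pi^*\varphi$.

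The key step, and really the only nontrivial one, is upgrading the pointwise $\eps_y$'s to a single $\eps$ valid on a saturated neighborhood of $\pi^{-1}(x)$. This is where the properness of the modification $\pi$ enters: the fiber $\pi^{-1}(x)$ is compact, so finitely many $V_{y_1},\ldots,V_{y_N}$ cover it and $\eps:=\min_{1\le i\le N}\eps_{y_i}>0$ serves on $V:=\bigcup_{i}V_{y_i}$. Properness again provides a neighborhood $W$ of $x$ with $\pi^{-1}(W)\subset V$, so $\pi^*(f\omega)|_{\pi^{-1}(W)}$ is a section of $K_{X'}\otimes\calI((1+\eps)\pi^*\varphi)$. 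Applying Lemma \ref{lem dem mod} this time to the weight $(1+\eps)\varphi$ pushes this section down to a section of $K_X\otimes\calI((1+\eps)\varphi)$ on $W$, which exhibits $f$ as a germ of $\calI((1+\eps)\varphi)$ at $x$. The main obstacle to watch is precisely this compactness/uniformity step; the remainder is routine bookkeeping with the push-forward identity.
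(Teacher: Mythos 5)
Your proposal is correct and follows essentially the same route as the paper: lift $f$ (as a section of $K_X\otimes\calI(\varphi)$) to $X'$ via Lemma \ref{lem dem mod}, use compactness of the fiber $\pi^{-1}(x)$ to extract a uniform exponent $1+\eps$ from the stalkwise strong openness on $X'$, and push back down. If anything, your treatment of the final descent step (choosing $W$ with $\pi^{-1}(W)\subset V$ by properness) is slightly more careful than the paper's, which simply asserts that $\pi(V)$ is an open neighborhood of $x$.
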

   \begin{proof}
  As $X'$ and $X$ are both complex manifolds, we need only prove that the strong openness property holding for $K_{X'}\otimes \mathcal{I}(\pi^*\varphi)$ guarantees the strong openness property for $K_X\otimes\mathcal{I}(\varphi)$.
  
    Fix $x\in X$ and $f\in K_X\otimes\calI(\varphi)_x$, there is a neighbourhood $  U$ of $x$ such that $\int_{U}i^{n^2} f \wedge \overline{f} e^{-\varphi}<+\infty$. By Lemma \ref{lem dem mod}, $\pi^*f\in H^0(\pi^{-1}(U),K_{X'}\otimes \calI(\pi^*\varphi))$ and
    the change of variable formula yields
$$\int_{U}i^{n^2} f \wedge \overline{f} e^{-\varphi}=\int_{\pi^{-1}(U)}i^{n^2} \pi^*f \wedge \overline{\pi^*f} e^{-\pi^*\varphi}.$$
 Since $\pi^{-1}(x)\subset \subset\pi^{-1}(U)$ is compact and $\calI(\pi^*\varphi)$ has strong openness property, there is $p>1$ and a subset $V\subset  U$ containing $\pi^{-1}(x)$ such that $$\int_{V}i^{n^2} \pi^*f \wedge \overline{\pi^*f} e^{-p\pi^*\varphi}<+\infty.$$ Since $\pi(V)$ is an open set containing $x$, then the change of variable formula yields $\int_{\pi(V)}i^{n^2} f \wedge \overline{f} e^{-p\varphi}<+\infty$. We complete the proof.
   \end{proof}

Noticing that any analytic set in a Stein manifold is globally defined and any
   Stein manifold is still Stein after removing a hypersurface, then we can show that the $L^2$-optimal property is preserved under  proper holomorphic modifications.

\begin{lemma}\label{lem modification}
  Let $\pi:X'\to X$ be a proper holomorphic modification between two complex manifolds. Let $\varphi$ be an upper semi-continuous function on   $X$. Then $\varphi$ is $L^2$-optimal if and only if $\pi^*\varphi$ is $L^2$-optimal.
\end{lemma}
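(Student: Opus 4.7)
The plan is to verify both directions by transferring the $\dbar$-problem across the biholomorphism induced by $\pi$ away from the exceptional locus, then extending the resulting solution across a hypersurface via Lemma \ref{lem L2negligible}. Let $E'\subset X'$ denote the exceptional set of $\pi$ and $Z:=\pi(E')\subset X$ its center; these are proper analytic subsets of $X'$ and $X$ respectively, and $\pi$ restricts to a biholomorphism $X'\setminus E'\to X\setminus Z$ with $\pi^{-1}(Z)=E'$.

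For the ``only if'' direction, assume $\varphi$ is $L^2$-optimal on $X$ and fix a Stein open $U'\subset X'$ together with a smooth strictly plurisubharmonic $\phi'$, a K\"ahler metric $\omega'$, and a $\dbar$-closed $f'\in L^{2}_{(n,1)}(U';\loc)$ with finite right-hand side in \eqref{eq:a1}. Cartan's Theorem~A provides a hypersurface $H'\subset U'$ containing $E'\cap U'$, so that $V':=U'\setminus H'$ is Stein and $\pi|_{V'}$ is a biholomorphism onto an open Stein subset $V\subset X\setminus Z$. Transferring $(\phi',\omega',f'|_{V'})$ via this biholomorphism and invoking the $L^2$-optimality of $\varphi$ on $V$ yields a solution $u$ on $V$ with the weighted $L^2$ estimate; pulling back, $u':=\pi^{*}u$ solves $\dbar u'=f'|_{V'}$ on $V'$, and the biholomorphic invariance of integrals of $(n,0)$-forms preserves the estimate. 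To promote $u'$ to a solution on all of $U'$, note that $\pi^{*}\varphi$ is upper semi-continuous on $U'$, hence locally bounded above; combined with the smoothness of $\phi'$, this makes $e^{-\pi^{*}\varphi-\phi'}$ locally bounded below by a positive constant, so the finite weighted integral on $V'$ upgrades to $\int_{K}|u'|^{2}_{\omega'}\,dV_{\omega'}<+\infty$ for every compact $K\subset U'$ (the analytic set $H'\cap U'$ being of measure zero). Thus $u'\in L^{2}_{\loc}(U')$, and since $f'\in L^{1}_{\loc}(U')$ with $\dbar u'=f'$ holding on the complement of $H'\cap U'$, Lemma \ref{lem L2negligible} promotes this to $\dbar u'=f'$ on all of $U'$; the desired estimate on $U'$ is inherited from the one on $V'$ since $H'\cap U'$ is a null set.

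The ``if'' direction is entirely symmetric. Given a Stein open $U\subset X$ with corresponding data $(\phi,\omega,f)$, the obstacle is that $\pi^{-1}(U)$ is generally not Stein; however, Cartan's Theorem~A in $U$ supplies a hypersurface $H\subset U$ containing $Z\cap U$, so that $U\setminus H$ is Stein and $\pi^{-1}(U\setminus H)\subset X'\setminus E'$ is biholomorphic to $U\setminus H$ and hence Stein. Applying the $L^2$-optimality of $\pi^{*}\varphi$ on $\pi^{-1}(U\setminus H)$ to the pullback data, pushing the solution forward via $\pi$ to $U\setminus H$, and then extending across $H$ by the same upper-semi-continuity-plus-Lemma \ref{lem L2negligible} argument produces a solution on $U$ with the required estimate.

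The main technical hurdle in each direction is this hypersurface-extension step, where the weighted $L^2$ bound on the constructed solution must be converted into an \emph{unweighted} $L^{2}_{\loc}$ bound in order to invoke Lemma \ref{lem L2negligible}. This is precisely where the upper semi-continuity of $\varphi$---which furnishes local upper bounds on $\pi^{*}\varphi$ (resp.\ $\varphi$)---and the structural fact, emphasized just before the lemma, that a Stein manifold remains Stein after deletion of a hypersurface, both play essential roles.
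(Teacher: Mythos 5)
Your proof is correct and follows essentially the same route as the paper's: both directions are reduced to the complement of a hypersurface containing the exceptional locus (globally cut out in the Stein chart), where $\pi$ is a biholomorphism, and the solution is then extended across the hypersurface via Lemma \ref{lem L2negligible}, using that the upper semi-continuous weight is locally bounded above to convert the weighted estimate into an unweighted $L^2_{\loc}$ bound. The only difference is cosmetic: the paper writes out the direction from $\pi^*\varphi$ to $\varphi$ and declares the converse ``similar,'' whereas you spell out both.
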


\begin{proof}
Let $n = \dim X = \dim X'$ and let $S \subset X$ be an analytic set such that $\pi : X' \setminus S' \to X \setminus S$ is a biholomorphism, where $S'=\pi^{-1}(S')$.

Assume that $\pi^*\varphi$ is $L^2$-optimal.
  Let $U\subset X$ be a Stein coordinate, $\phi$  a smooth strictly plurisubharmonic function,
   $\omega$  a K\"{a}hler metric  on $U$ and
   $f$  a $\dbar$-closed  $(n,1)$-form satisfying with \begin{equation*}
   \int_U \inner{B_{\omega,\phi}^{-1}f,f}_{\omega} e^{-\varphi-\phi} dV_{\omega}<+\infty.
  \end{equation*}
  Notice that the  integral is independent of the choice of $\omega$ and
  there is a nonzero holomorphic function $G\in \calO(U)$ such that $U\cap S\subset G^{-1}(0)$. Then $U\setminus G^{-1}(0)$ is also  Stein.
  Since $\varphi$ is locally bounded from above, we may even consider forms $f$ which are a priori defined only on $U \setminus G^{-1}(0) $, because $f$ will be in $L^2_{\text{loc}}(U)$ and therefore will automatically extend through $G^{-1}(0)$ by Lemma \ref{lem L2negligible}. The change of variable formula yields
  \begin{align*}
   \int_{\pi^{-1}(U)} \inner{B_{\pi^*\omega,\pi ^*\phi}^{-1}\pi^*f,\pi^*f}_{\pi^*\omega} e^{-\pi^*\varphi-\pi^*\phi} dV_{\pi^*\omega}=&\int_U \inner{B_{\omega,\phi}^{-1}f,f}_{\omega} e^{-\varphi-\phi} dV_{\omega}.
  \end{align*}

    Notice that $\pi$ is  a biholomorphism   on $\pi^{-1}(U\setminus G^{-1}(0))$,
   and $\pi^*\varphi$ is $L^2$-optimal, then the equation $\dbar u=\pi ^*f$ can be solved on $\pi^{-1}(U\setminus G^{-1}(0))$
   with the estimate:
   \begin{align*}
   & \int_{\pi^{-1}(U\setminus G^{-1}(0))}|u|^2_{\pi ^*\omega} e^{-\pi ^*\varphi-\pi^*\phi} dV_{\pi^*\omega}
     \\\leq &\int_{\pi^{-1}(U)} \inner{B_{\pi^*\omega,\pi ^*\phi}^{-1}\pi^*f,\pi^*f}_{\pi^*\omega} e^{-\pi ^*\varphi-\pi^*\phi} dV_{\pi^*\omega}.
  \end{align*}
  Hence $\dbar((\pi^{-1})^* u)=f$ on  $U\setminus G^{-1}(0)$ with \begin{equation*}
   \int_{U\setminus G^{-1}(0)}|(\pi^{-1})^* u|^2_{\omega} e^{-\varphi-\phi} dV_{\omega}
   \leq
     \int_U \inner{B_{\omega,\phi}^{-1}f,f}_{\omega} e^{-\varphi-\phi} dV_{\omega}.
  \end{equation*}
  Notice that $\varphi$ is   bound from above locally, the  Lemma \ref{lem L2negligible} implies that $\dbar(\pi^{-1})^* u=f$ on  $U$.
This means that $\varphi$   is $L^2$-optimal.

  Similarly, we can also derive the $L^2$-optimality of $\pi^*\varphi$ from the $L^2$-optimality of $\varphi$.  We complete the proof.
\end{proof}

\begin{lemma}[Siu-type lemma]\label{lem weak siu lemma}
  Let $\varphi$ be an upper semi-continuous function on  a bounded domain $U\times D\subset \CC^{n-k}_{w'}\times\CC^k_{w''}$. Assume that $\calI(\varphi)=\calO$. Then almost every $z''\in D$ satisfies that  for any  relatively compact subset $V\Subset U$  and any nonnegative function $P\in C^0(\overline{V}\times D)$, we have $$ \int_{V}P(w',z'')e^{-\varphi(w',z'')}d\lambda_{n-k}(w')<+\infty$$ and $$\lim_{\eps\to 0} \int_{\BB_{z''}(\eps)}\hspace{-3.05em}-\quad\quad
  \int_{V}P(w',w'') |e^{-\varphi(w',w'')}- e^{-\varphi(w',z'')}|d\lambda_{n-k}(w')
  d\lambda_n(w'')=0.$$
\end{lemma}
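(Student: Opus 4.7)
The plan is to reduce everything to Proposition \ref{prop Lebesgue hyperplane} applied to the single weight $e^{-\varphi}$, and to absorb the test function $P$ by a uniform $L^\infty$-bound on a small compact neighborhood of $\{w''=z''\}$. The key point is that this produces one null set $E\subset D$ that works uniformly in the pair $(V,P)$, so that the universal quantifier in the statement becomes harmless.

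First, because $\calI(\varphi)=\calO$, the weight $e^{-\varphi}$ is locally integrable on $U\times D$. I choose a countable exhaustion of $U\times D$ by relatively compact product open sets $V_j\times D_j$ with $V_j\Subset V_{j+1}\Subset U$, $D_j\Subset D_{j+1}\Subset D$, and $\bigcup_j V_j=U$, $\bigcup_j D_j=D$. For each $j$ one has $e^{-\varphi}\in L^1(V_j\times D_j)$, and the remark following Proposition \ref{prop Lebesgue hyperplane} yields a measure-zero set $E_j\subset D_j$ such that for every $z''\in D_j\setminus E_j$,
$$\int_{V_j}e^{-\varphi(w',z'')}d\lambda_{n-k}(w')<+\infty$$
and
$$\lim_{\eps\to 0}\frac{1}{\lambda_k(\BB^k_{z''}(\eps))}\int_{\BB^k_{z''}(\eps)}\int_{V_j}\left|e^{-\varphi(w',w'')}-e^{-\varphi(w',z'')}\right|d\lambda_{n-k}(w')d\lambda_k(w'')=0.$$
I then set $E:=\bigcup_j E_j$, a countable union of null sets, so $\lambda_k(E)=0$.

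Finally, I fix any $z''\in D\setminus E$, any $V\Subset U$, and any nonnegative $P\in C^0(\overline{V}\times D)$. I choose $j$ large enough that $V\Subset V_j$, and $\eps_0>0$ small enough that $\overline{\BB^k_{z''}(\eps_0)}\subset D_j$. Continuity of $P$ on the compact set $\overline{V}\times\overline{\BB^k_{z''}(\eps_0)}$ gives a finite bound $M:=\sup_{\overline{V}\times\overline{\BB^k_{z''}(\eps_0)}}P$. Then $\int_V P(w',z'')e^{-\varphi(w',z'')}d\lambda_{n-k}(w')\le M\int_{V_j}e^{-\varphi(w',z'')}d\lambda_{n-k}(w')<+\infty$, and for every $\eps<\eps_0$ the averaged integral from the statement is bounded by
$$M\cdot\frac{1}{\lambda_k(\BB^k_{z''}(\eps))}\int_{\BB^k_{z''}(\eps)}\int_{V_j}\left|e^{-\varphi(w',w'')}-e^{-\varphi(w',z'')}\right|d\lambda_{n-k}(w')d\lambda_k(w''),$$
which tends to $0$ as $\eps\to 0$ by the previous step. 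The only conceptual obstacle I anticipated was the universal quantifier over the uncountable family of admissible pairs $(V,P)$: the naive idea of applying Proposition \ref{prop Lebesgue hyperplane} to $Pe^{-\varphi}$ would force a separate null set for each $P$, and these cannot be unioned. The uniform bound by $M$ cleanly sidesteps this by reducing the entire assertion to a single Lebesgue-type statement about $e^{-\varphi}$ on the countable family $\{V_j\times D_j\}_j$.
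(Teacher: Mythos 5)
Your proposal is correct and follows essentially the same route as the paper: exhaust by a countable family of relatively compact product sets, apply the Lebesgue-type differentiation theorem (Proposition \ref{prop Lebesgue hyperplane}) to $e^{-\varphi}$ on each piece, take the countable union of exceptional null sets, and then absorb $P$ via its sup-bound on a compact neighborhood of the slice. The paper exhausts only $U$ and invokes uniform continuity of $P$ where you invoke boundedness, but these are the same argument.
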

\begin{proof}
Let $\{U_\ell\}$ be a compact exhaustion of $U$, which means $U_\ell\Subset U_{\ell+1}$ for each $\ell$ and $\bigcup_{\ell=1}^{+\infty}U_\ell=U$.  Then for any  relatively compact subset $V\Subset U$, $V\subset U_\ell$ for some $\ell$.
    Since  $\calI(\varphi)=\calO$, we know that the function $\int_{U_\ell}e^{-\varphi(w',w'')}d\lambda_{n-k}(w')$ is locally integrable on $D\subset\CC^k$. Then by Proposition \ref{prop Lebesgue hyperplane}, there is a measurable subset $A_{\ell}$ of full measure such that  $$\lim_{\eps\to 0}\int_{\BB_{z''}(\eps)}\hspace{-3.05em}-\quad\quad
  \int_{U_\ell}| e^{-\varphi(w',w'')}- e^{-\varphi(w',z'')}|d\lambda_{n-k}(w')
  d\lambda_n(w'')=0$$ for any $z''\in A_{\ell}$. Therefore,  for any $z''\in\bigcap_{\ell}A_{\ell}$, we obtain that $$\lim_{\eps\to 0} \int_{\BB_{z''}(\eps)}\hspace{-3.05em}-\quad\quad
  \int_{V} |e^{-\varphi(w',w'')}- e^{-\varphi(w',z'')}|d\lambda_{n-k}(w')
  d\lambda_n(w'')=0$$ for any $V\Subset U$.
  Notice that $P$ is uniformly continuous on $V\times \BB^k_{z''}(\eps)$, then for any $z''\in\bigcap_{\ell}A_{\ell}$, we obtain
  $$\lim_{\eps\to 0} \int_{\BB_{z''}(\eps)}\hspace{-3.05em}-\quad\quad
  \int_{V}P(w',w'') |e^{-\varphi(w',w'')}- e^{-\varphi(w',z'')}|d\lambda_{n-k}(w')
  d\lambda_n(w'')=0.$$

\end{proof}

\begin{lemma}[{\cite[Proposition 3.3]{LXYZ21}}] \label{pro nak res nak}
Let $\varphi$ be an upper semi-continuous function on a Stein domain $U\times D\subset \CC_{w'}^{n-k}\times\CC_{w''}^k$. Assume that $\varphi$ is $L^2$ optimal and $\supp\calO/\calI(\varphi)\subset H\times D$ for some hypersurface $H\subset U$.  Then for almost all $z''\in D$,  $\varphi|_{U\times\{z''\}}$ is  $L^2$-optimal.
\end{lemma}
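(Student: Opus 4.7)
The plan is to verify directly the $L^2$-optimality inequality for the slice $\varphi|_{U\times\{z''\}}$ by lifting slice data to an $(n,1)$-problem on $V\times\BB^k_{z''}(r)$, applying the ambient $L^2$-optimality of $\varphi$, and recovering the slice estimate via the mean-value property in the $w''$-direction together with Lemma~\ref{lem weak siu lemma}.

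First I fix a generic $z''\in D$ in the complement of the measure-zero union of the following exceptional sets: those where $e^{-\varphi(\cdot,z'')}$ fails to be locally integrable on $U\setminus H$ (null by Fubini and Remark~\ref{rmk coherent}) and those where Lemma~\ref{lem weak siu lemma} fails. Given a Stein coordinate $V\Subset U$, smooth strictly plurisubharmonic $\phi$, K\"ahler metric $\omega$ on $V$, and a $\dbar$-closed $(n-k,1)$-form $f$ on $V$ with $M:=\int_V\langle B_{\omega,\phi}^{-1}f,f\rangle_\omega e^{-\varphi(\cdot,z'')-\phi}dV_\omega<\infty$, I extend the data to $V\times D$ via $\tilde\phi:=\phi+|w''|^2$, $\tilde\omega:=\omega+\tfrac{i}{2}\partial\bar\partial|w''|^2$, and form $F:=f\wedge dw_1''\wedge\cdots\wedge dw_k''$, a $\dbar$-closed $(n,1)$-form on $V\times D$. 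A computation in diagonal coordinates yields $\langle B_{\tilde\omega,\tilde\phi}^{-1}F,F\rangle_{\tilde\omega}=\langle B_{\omega,\phi}^{-1}f,f\rangle_\omega$, independent of $w''$.

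For each small $r>0$ with $\BB^k_{z''}(r)\subset D$, $L^2$-optimality of $\varphi$ on the Stein open set $V\times\BB^k_{z''}(r)$ produces an $(n,0)$-form $U_r=u_r\,dw_1\wedge\cdots\wedge dw_n$ solving $\dbar U_r=F$ with the ambient estimate. Decomposing $\dbar U_r=F$ by the type of $d\bar w$ forces $u_r$ to be holomorphic in $w''$ with $\bar\partial_{w'}u_r=f$; hence the slice value $u(w'):=u_r(w',z'')$ is well-defined in $L^2_\loc(V)$, and matching Taylor coefficients of $u_r$ around $w''=z''$ yields $\dbar u=f$ on $V$. The subharmonicity of $|u_r|^2$ in $w''$ gives $|u(w')|^2\leq\lambda_k(\BB^k_{z''}(r))^{-1}\int_{\BB^k_{z''}(r)}|u_r(w',w'')|^2 d\lambda_k(w'')$, which together with Fubini reduces the slice integral $\int_V|u|^2 e^{-\varphi(\cdot,z'')-\phi}dV_\omega$ to a volume-averaged ambient integral carrying the slice weight $e^{-\varphi(\cdot,z'')}$.

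The main obstacle is exchanging the ambient weight $e^{-\varphi(w',w'')}$ (which the inherited $L^2$-estimate controls) for the slice weight $e^{-\varphi(w',z'')}$ in this average. Writing $e^{-\varphi(\cdot,z'')}=e^{-\varphi(\cdot,w'')}+(e^{-\varphi(\cdot,z'')}-e^{-\varphi(\cdot,w'')})$ and using $e^{|w''|^2}=e^{|z''|^2}(1+O(r))$ on $\BB^k_{z''}(r)$, the main piece is bounded by the averaged ambient RHS, which converges to $e^{-|z''|^2}M$ as $r\to 0$ by Lemma~\ref{lem weak siu lemma} applied to a continuous approximant of $\langle B_{\omega,\phi}^{-1}f,f\rangle e^{-\phi}$ (e.g., first assume $f$ smooth and pass to the limit). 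The genuine difficulty is the companion error term $\lambda_k(\BB^k_{z''}(r))^{-1}\int|u_r|^2(e^{-\varphi(\cdot,z'')}-e^{-\varphi(\cdot,w'')})e^{-\phi}d\lambda_k dV_\omega$: since $u_r$ depends on $r$ and is only $L^2$, Lemma~\ref{lem weak siu lemma} does not apply directly. I plan to bypass this by extracting a weakly convergent subsequence $u_{r_n}(\cdot,z'')\rightharpoonup u_0$ in the weighted Hilbert space on $V$, using the uniform bound from the main piece, the Lebesgue-type convergence of weights, and lower semicontinuity of the slice $L^2$-norm to conclude $\int_V|u_0|^2 e^{-\varphi(\cdot,z'')-\phi}dV_\omega\leq M$ with $\dbar u_0=f$.
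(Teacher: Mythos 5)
Your overall architecture (lift the slice data to an $(n,1)$-problem on $V\times\BB^k_{z''}(r)$, solve with the ambient $L^2$-optimality, restrict back, and treat the right-hand side with Lemma \ref{lem weak siu lemma}) matches the paper's, but the mechanism you use to return to the slice has a genuine gap that your final paragraph does not repair. The sub-mean-value inequality $|u_r(w',z'')|^2\le \lambda_k(\BB^k_{z''}(r))^{-1}\int_{\BB^k_{z''}(r)}|u_r(w',w'')|^2\,d\lambda_k(w'')$ produces, after multiplying by $e^{-\varphi(w',z'')-\phi}$ and integrating in $w'$, a ball-averaged ambient integral in which the weight is the \emph{slice} weight $e^{-\varphi(w',z'')}$, whereas the estimate inherited from $L^2$-optimality controls only the integral against the \emph{ambient} weight $e^{-\varphi(w',w'')}$. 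The discrepancy is exactly your error term $\lambda_k(\BB^k_{z''}(r))^{-1}\int|u_r|^2\,|e^{-\varphi(\cdot,z'')}-e^{-\varphi(\cdot,w'')}|$, and since $|u_r|^2$ is merely $L^1$ against the ambient weight and changes with $r$, Lemma \ref{lem weak siu lemma} (which requires a fixed continuous factor $P$) cannot absorb it. Your proposed fix is circular: to extract a weakly convergent subsequence of $u_r(\cdot,z'')$ in $L^2(V,e^{-\varphi(\cdot,z'')-\phi})$ you first need a uniform bound on $\int_V|u_r(\cdot,z'')|^2e^{-\varphi(\cdot,z'')-\phi}\,dV_\omega$, and that bound is precisely what the uncontrolled error term prevents you from obtaining.

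The paper avoids the weight mismatch by not evaluating at the center. It uses Fubini to choose, for each $\eps$, a single good slice $\xi_\eps\in\BB^k_{z''}(\eps)$ on which $\int_{V\times\{\xi_\eps\}}|v_\eps|^2e^{-\varphi(\cdot,\xi_\eps)-\phi_\eps}$ is at most the ball average of the ambient integral; here the weight appearing on the chosen slice is $e^{-\varphi(\cdot,\xi_\eps)}$, consistent with the ambient weight. The passage from $e^{-\varphi(\cdot,\xi_\eps)}$ to $e^{-\varphi(\cdot,z'')}$ as $\eps\to0$ is then handled by the \emph{upper semi-continuity} of $\varphi$ (which gives $\liminf_{\eps}e^{-\varphi(w',\xi_\eps)}\ge e^{-\varphi(w',z'')}$) combined with Fatou's lemma, as in \eqref{eq gap}; Lemma \ref{lem weak siu lemma} is only ever applied to the continuous datum $\langle B^{-1}_{\omega,\phi}f_\ell,f_\ell\rangle$ on the right-hand side, after mollifying $f$ on a Stein exhaustion $V_\ell$ of $V\setminus H$ (where $\calI(\varphi)=\calO$, so the lemma actually applies) and extending across $H$ at the end. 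If you replace your mean-value step by this Fubini-plus-upper-semi-continuity argument and incorporate the exhaustion of $V\setminus H$, the proof closes.
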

\begin{proof}
Since $U\setminus H$ is also Stein, there is  a Stein exhaustion $U_\ell$ of $U\setminus H$ and we have
$\calI(\varphi)=\calO$ on $\overline{U_\ell}\times D$. It follows from Lemma \ref{lem weak siu lemma} that there is a subset $A$ of $D$ such that $\lambda_k(A)=\lambda(D)$ and  any $z''\in A$ satisfies that  for any $U_\ell$ and any nonnegative function $P\in   C^0(\overline{U_\ell}\times D)$, we have
$$ \int_{U_\ell}P(w',z'')e^{-\varphi(w',z'')}d\lambda_{n-k}(w')<+\infty$$ and
\begin{equation}\label{for pro nak res nak aaa}
                                                 \lim_{\eps\to 0} \int_{\BB_{z''}(\eps)}\hspace{-3.05em}-\quad\quad
  \int_{U_\ell}P(w',w'') |e^{-\varphi(w',w'')}- e^{-\varphi(w',z'')}|d\lambda_{n-k}(w')
  d\lambda_k(w'')=0.
                                                   \end{equation}

 For any $z''\in A$, let $V\subset U$ be a Stein open subset,  $\omega$  a K\"{a}hler metric, $\phi$ a smooth strictly plurisubharmonic function on $V\times\{z''\}$ and $f(w')=\sum_{j=1}^{n-k}f^{(j)}(w')d\bar w'_j$ a $\dbar$-closed  $(n-k,1)$-form   such that
   \begin{equation*}
   \int_{V\times\{z''\}} \inner{B_{\omega,\phi}^{-1}f,f}_{\omega} e^{-\varphi-\phi} dV_{\omega}<+\infty.
  \end{equation*}
  Let $V_\ell$ be  a Stein exhaustion of $V\setminus H$. After passing to a subsequence, we may assume that $V_\ell\subset U_ell$ for each $\ell$. Let $f_\ell$ be the smooth approximation sequence of $f$ obtained by convolution on $V_{\ell+1}$, then we have $\dbar f_\ell=0$ and $f_\ell$ are smooth on $\overline{V_\ell}\times\{z''\}$ with
 \begin{equation}\label{for pro nak res nak bbb}
\lim_{\ell\to+\infty}\int_{V_\ell\times\{z''\}} \inner{B_{\omega,\phi}^{-1}f_\ell,f_\ell}_{\omega} e^{-\varphi-\phi} dV_{\omega}=\int_{V\times\{z''\}} \inner{B_{\omega,\phi}^{-1}f,f}_{\omega} e^{-\varphi-\phi} dV_{\omega}.
 \end{equation}

  Write $w=(w',w'')=(w_1,\cdots,w_{n-k},w_{n-k+1},\cdots,w_n)$ and take a K\"{a}hler metric $\widetilde\omega=\omega+\sum_{j=n-k+1}^n\frac{i}{2}dw_j\wedge d\bar w_j$    on $V_{\ell,\eps}:=V_\ell\times\BB^k_{z''}(\eps)$. Notice that $\phi_\eps(w',w''):=\phi(w')+\eps|w''-z''|^2$ is a smooth strictly plurisubharmonic function on $V_{\ell,\eps}$ and $$\widetilde{f_\ell}(w',w'')=\sum_{j=1}^{n-k}f_\ell^{(j)}(w')\wedge dw''\wedge d\bar w_j$$ is a $\dbar$-closed  $(n,1)$-form on $V_{\ell,\eps}$, where $dw''=dw_{n-k+1}\wedge\cdots\wedge dw_n$.

Since $\phi_\eps(w',z'')=\phi(w')$ and  $\langle B^{-1}_{\widetilde\omega,\phi_\eps} \widetilde {f_\ell} ,\widetilde {f_\ell}  \rangle_{\widetilde\omega} =\langle B^{-1}_{\omega,\phi}   f_\ell ,  f_\ell  \rangle_{ \omega}, $
then by \eqref{for pro nak res nak aaa},  we obtain \begin{align}\label{for pro nak res nak ddd}
  &\lim_{\eps \to 0} \frac{1}{\lambda_k(\BB^k_{z''}(\eps))}\int_{V_{\ell, \eps}}  \langle B^{-1}_{\widetilde\omega,\phi_\eps} \widetilde {f_\ell} ,\widetilde {f_\ell}  \rangle_{\widetilde\omega}e^{-\varphi-\phi_\eps}dV_{\widetilde\omega}\nonumber\\=&\int_{V_\ell\times\{z''\}}\langle B^{-1}_{\omega,\phi}   f_\ell,  f_\ell \rangle_{ \omega}e^{-\varphi-\phi}dV_{\omega}.
 \end{align}

Since $\varphi$ is $L^2$-optimal,
 there is a $v_{\ell,\eps}\in L^2_{n,0}(V_{\ell, \eps},E;\loc)$ such that $\dbar v_{\ell,\eps}=\widetilde f$ and
     \begin{align*}
    \int_{V_{\ell, \eps}} |v_{\ell,\eps}|^2_{\omega} e^{-\varphi-\phi_\eps} dV_{\widetilde\omega}
    \le \int_{V_{\ell, \eps}} \langle B^{-1}_{\omega,\phi_\eps}\widetilde {f_\ell},\widetilde {f_\ell} \rangle_{\widetilde\omega} e^{-\varphi-\phi_\eps} dV_{\widetilde\omega}.
 \end{align*}
 Moreover, by the weak regularity of $\dbar$ on $(n,0)$-forms, we can take $v_{\ell,\eps}$ to be smooth.
Using Fubini's theorem, we know that for any $\eps>0$, there exists a $\xi_\eps\in \BB^k_{z''}(\eps)\cap A$ such that \begin{equation*}
            \int_{V_\eps\cap\{w''=\xi_\eps\} }|v_{\ell,\eps}(w',\xi_\eps)|^2_{\widetilde\omega,h}e^{-\varphi-\phi_\eps}dV_{\widetilde\omega}\le
\frac{1}{\lambda_k(\BB^k_{z''}(\eps))}\int_{V_{\ell, \eps}} |v_{\ell,\eps}|^2_{\widetilde\omega,h} e^{-\varphi-\phi_\eps} dV_{\widetilde\omega}.
          \end{equation*}
Let $u_{\ell, \eps}(w')=v_{\eps}(w',\xi_\eps)/dw''$, then we have $\dbar u_{\ell, \eps}=f_\ell$ on $U$ and \begin{align}\label{for pro nak res nak ccc}
&\int_{V_\ell\times\{z''\} }|u_{\ell,\eps}|^2_{\omega}e^{-\varphi(w',\xi_\eps)-\phi_\eps(w',\xi_\eps)}dV_{\omega}\nonumber\\ \le&
\frac{1}{\lambda_k(\BB^k_{z''}(\eps))}\int_{V_{\ell, \eps}} \langle B^{-1}_{\widetilde\omega,\phi_\eps}\widetilde {f_\ell},\widetilde {f_\ell} \rangle_{\widetilde\omega} e^{-\varphi-\phi_\eps} dV_{\widetilde\omega}. 
\end{align}

Since $\varphi$ is upper semi-continuous,
we  can choose a sequence $\eps_k\to 0$ such that $u_{\ell,\eps_k}$  compactly converges to a limit $u_\ell$. Then   $\dbar u=f$ and for any $K\subset V_\ell$, by Fatou's lemma together with   \eqref{for pro nak res nak ddd} and \eqref{for pro nak res nak ccc} , we have
\begin{align}\label{eq gap}
  &\int_{K\times\{z''\} } |u_\ell|^2_{\omega}e^{-\varphi-\phi}dV_\omega   \nonumber\\\le &\int_{K\times\{z''\} } \lim_{\eps_k\to0}  |u_{\ell,\eps_k}|_\omega^2\cdot\liminf_{\eps_k\to0}e^{-\varphi(w',\xi_{\eps_k})-\phi_{\eps_k}(w',\xi_{\eps_k})}dV_\omega
  \nonumber\\\le &\int_{K\times\{z''\} } \liminf_{\eps_k\to0}  |u_{\ell,\eps_k}|_\omega^2 e^{-\varphi(w',\xi_{\eps_k})-\phi_{\eps_k}(w',\xi_{\eps_k})}dV_\omega  \\\le & \liminf_{\eps_k\to0} \int_{K\times\{z''\} } |u_{\ell,\eps_k}|_\omega^2 e^{-\varphi(w',\xi_{\eps_k})-\phi_{\eps_k}(w',\xi_{\eps_k})}dV_\omega    \nonumber\\  \le&   \liminf_{\eps_k\to0} \frac{1}{\lambda_k(\BB^k_{z''}(\eps_k))}\int_{V_{\ell, \eps_k}} \langle B^{-1}_{\widetilde\omega,\phi_{\eps_k}}\widetilde {f_\ell},\widetilde {f_\ell} \rangle_{\widetilde\omega} e^{-\varphi-\phi_{\eps_k}} dV_{\widetilde\omega}\nonumber\\
  \le & \int_{V_\ell\times \{z''\} }\langle B^{-1}_{\omega,\phi}   f_\ell,  f_\ell \rangle_{ \omega}e^{-\varphi-\phi}dV_{\omega},\nonumber
\end{align}
where  the second inequality is due to the fact that $$\liminf_ka_k\liminf_kb_k\le\liminf_ka_kb_k$$ for nonnegative sequence $\{a_k\},\{b_k\}$.
Since  $K\Subset V_\ell$ is arbitrary, we obtain
$$\int_{V_\ell\times\{z''\} } |u_\ell|^2_{\omega}e^{-\varphi-\phi}dV_\omega \le  \int_{V_\ell\times \{z''\} }\langle B^{-1}_{\omega,\phi}   f_\ell,  f_\ell \rangle_{ \omega,h}e^{-\varphi-\phi}dV_{\omega}.$$
Then  we can choose a sequence $\ell_k\to 0$ such that $u_{\ell_k}$  weakly converges to a limit $u$ on $V\times\{z''\}$. Then $\dbar u=f$. In addition, by Mazur's theorem, there is a sequence $\{g_j\}$ such that $g_j$ strongly converges to $u$ where each $g_{j}$ is a convex combination of
	$\{u_{\ell_k} \}_{k\ge j}$. Write $g_{j}=\sum_{m=1}^{N}a_mu_{\ell_{k_m}}$ for $a_m>0$ with $\sum_{m=1}^{N}a_m=1$, then by the Cauchy-Schwarz inequality, we have
\begin{align*}
  &\int_{V_\ell\times\{z''\} }|g_j|^2_{\omega}e^{-\varphi-\phi}dV_\omega \\
 = &\int_{V_\ell\times\{z''\} }|\sum_{m=1}^{N}a_mu_{\ell_{k_m}}|^2_{\omega}e^{-\varphi-\phi}dV_\omega\\
 \le &\int_{V_\ell\times\{z''\} }\left(\sum_{m=1}^{N}a_m\right)
 \left(\sum_{m=1}^{N}(a_m|u_{\ell_{k_m}}|^2_{\omega})\right)e^{-\varphi-\phi}dV_\omega\\
 \le& \sum_{m=1}^{N}a_m\int_{V_\ell\times \{z''\} }\langle B^{-1}_{\omega,\phi}   f_{\ell_{k_m}},  f_{\ell_{k_m}} \rangle_{ \omega,h}e^{-\varphi-\phi}dV_{\omega}
\end{align*}

Then by Fatou's lemma and (\ref{for pro nak res nak bbb}), we have
$$\int_{V\times\{z''\}}|u|^2_{\omega}e^{-\varphi-\phi}dV_\omega\le\int_{V\times \{z''\} }\langle B^{-1}_{\omega,\phi}   f ,  f  \rangle_{\omega}e^{-\varphi-\phi}dV_{\omega}.$$

\end{proof}

\begin{remark}
  In this section, all properties hold satisfactorily for ``upper semi-continuous" $L^2$-optimal Hermitian metrics on holomorphic vector bundles. However, the validity of the following modified inequality replacing \eqref{eq gap} remains unclear:
\begin{equation}\label{eq modified gap}
\lim_{\eps_k\to 0}\liminf_{\xi\to z''} |u_{\ell,\eps_k}|_{\omega,h(z',\xi)}^2
\leq \lim_{\eps_k\to 0}|u_{\ell,\eps_k}|_{\omega,h(z',\xi_{\eps_k})}^2.
\end{equation}
\end{remark}

\section{$L^2$ extension theorem}\label{Sec:Ext}

   Note that if we use Lemma \ref{lem weak siu lemma} instead of the generalized Siu's lemma (\cite[Lemma 2.22]{LXYZ21}) in the proof  of \cite[Theorem 3.1]{LXYZ21}, then we can obtain the following $L^2$-extension theorem for upper semi-continuous $L^2$-optimal functions.
\begin{theorem}[=Theorem \ref{Thm:OT ext}]
   Let $\varphi$ be an upper semi-continuous function on  a  Stein domain $U\times D\subset \CC^{n-k}_{w'}\times\CC^k_{w''}$.
	 Assume that $D$ is bounded, $\varphi$ is $L^2$ optimal and $\supp\calO/\calI(\varphi)\subset H\times D$ for some hypersurface $H\subset U$.  Then for almost every $z''\in D$, for any $f\in \calO(U\times\{z''\})$ with
$\int_{U\times\{z''\}}|f|^2e^{-\varphi}d\lambda_{n-k}(w')<+\infty,$
 there is an $F\in \calO(U\times D)$ such that $F|_{U\times\{z''\}}=f$ and
\begin{equation*}
\int_{U\times D}  \frac{|F|^2e^{-\varphi}}{ |w''-z''|^{2k}(\log |w''-z''|^2)^2} d\lambda_{n}(w)\leq C \int_{U\times\{z''\}}|f|^2e^{-\varphi} d\lambda_{n-k}(w'),
\end{equation*}
where $C$ is a uniform constant only depending on $k$ and $\sup_{w''\in D}|w''|^2$.

  If $k=n$,  $U$ and the condition that $\supp\calO/\calI(\varphi)\subset H\times D$  will disappear,  and $ \int_{U\times\{z''\}}|f|^2e^{-\varphi} d\lambda_{n-k}(w')$ will be replaced by $|f(z'')|^2e^{-\varphi(z'')}$.
\end{theorem}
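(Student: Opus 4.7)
The plan is to follow the proof of \cite[Theorem 3.1]{LXYZ21} essentially verbatim, with the generalized Siu's lemma replaced by our Lemma \ref{lem weak siu lemma}, and with a little extra care because $\varphi$ is only upper semi-continuous.

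\textbf{Setup.} The case $k=n$ is a direct specialization, so assume $k<n$. I would fix $z''\in D$ in the full-measure subset provided by Lemma \ref{pro nak res nak} (applied via Lemma \ref{lem weak siu lemma} to a compact exhaustion of $U\setminus H$), so that $\varphi|_{U\times\{z''\}}$ is $L^2$-optimal and the Lebesgue-type averaging holds along the $w'$-fibers. Given $f\in\calO(U\times\{z''\})$ with the stated $L^2$-integrability, view $f(w')$ as a function on $U\times D$ independent of $w''$. For small $\eps>0$ take a smooth cutoff $\chi_\eps(t)$ equal to $1$ for $t\le\eps^2/2$ and to $0$ for $t\ge\eps^2$, and set
\[F_\eps(w):=\chi_\eps(|w''-z''|^2)\,f(w'),\]
regarded as an $(n,0)$-form after wedging with $dw$. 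Then $g_\eps:=\bp F_\eps$ is a $\bp$-closed $(n,1)$-form supported on the thin slab $\eps^2/2\le|w''-z''|^2\le\eps^2$.

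\textbf{Solving and weighting.} Next I would choose the auxiliary weight
\[\phi_\eps(w''):=k\log(|w''-z''|^2+\eps^2)-\log\bigl(-\log(|w''-z''|^2+\eps^2)+M\bigr),\]
with $M$ large (depending on $\sup_D|w''|^2$) so that the outer logarithm is well-defined and positive on $D$. After a negligible strictly psh perturbation, $\phi_\eps$ becomes strictly plurisubharmonic on $U\times D$, and the standard Berndtsson--Chen Hessian computation controls the inverse curvature operator $B^{-1}_{\widetilde\omega,\phi_\eps}$ applied to $g_\eps$ by a bounded multiple of $|f(w')|^2\,|\chi'_\eps(|w''-z''|^2)|^2$. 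Because $\varphi+\phi_\eps$ is $L^2$-optimal by Lemma \ref{lem h^}, one obtains a smooth solution $u_\eps$ of $\bp u_\eps=g_\eps$ with
\[\int_{U\times D}|u_\eps|^2e^{-\varphi-\phi_\eps}\,d\lambda_n\le\int_{U\times D}\inner{B^{-1}_{\widetilde\omega,\phi_\eps}g_\eps,g_\eps}_{\widetilde\omega}e^{-\varphi-\phi_\eps}\,d\lambda_n.\]
Set $\widetilde F_\eps:=F_\eps-u_\eps$; each $\widetilde F_\eps$ is holomorphic on $(U\setminus H)\times D$, and Lemma \ref{lem L2negligible} extends it across $H\times D$. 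A weak-limit plus Mazur convex-combination argument then produces $F\in\calO(U\times D)$ with the desired weighted $L^2$-estimate. The singular factor $e^{-k\log|w''-z''|^2}$ in the weight forces $u_\eps|_{U\times\{z''\}}=0$, so $F|_{U\times\{z''\}}=f$.

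\textbf{Main obstacle.} The delicate step is the limit $\eps\to 0$ of the right-hand side
\[\int_{U\times D}\inner{B^{-1}_{\widetilde\omega,\phi_\eps}g_\eps,g_\eps}_{\widetilde\omega}e^{-\varphi-\phi_\eps}\,d\lambda_n.\]
Since $\varphi$ is only upper semi-continuous, one cannot appeal to any pointwise convergence $e^{-\varphi(w',w'')}\to e^{-\varphi(w',z'')}$ as $w''\to z''$. Instead, Lemma \ref{lem weak siu lemma}, applied to the continuous weight $P_\eps$ arising from the Hessian estimate, is precisely what guarantees that this integral collapses as $\eps\to 0$ to a uniform constant (depending only on $k$ and $\sup_D|w''|^2$) times $\int_{U\times\{z''\}}|f|^2e^{-\varphi}\,d\lambda_{n-k}$. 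Tracking the constants so that they are independent of $\eps$ and $z''$ is a bookkeeping exercise already carried out in \cite{LXYZ21}, and the scalar setting here avoids the open ``modified inequality'' issue flagged in the remark after Lemma \ref{pro nak res nak}.
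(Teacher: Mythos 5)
Your overall architecture (extend trivially in $w''$, cut off near the fiber, solve $\bp$ with an Ohsawa--Takegoshi-type weight, use Lemma \ref{lem weak siu lemma} to handle the merely upper semi-continuous $\varphi$, and force $u_\eps$ to vanish on the fiber via non-integrability of $|w''-z''|^{-2k}$) matches the paper's, but the central analytic step is wrong as stated. You apply the $L^2$-optimal inequality \emph{directly} with the composite weight $\phi_\eps=k\log(r+\eps^2)-\log(M-\log(r+\eps^2))$ and claim the right-hand side $\int\inner{B^{-1}_{\widetilde\omega,\phi_\eps}g_\eps,g_\eps}e^{-\varphi-\phi_\eps}$ collapses to $C\int_{U\times\{z''\}}|f|^2e^{-\varphi}$. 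It does not: on $\supp g_\eps=\{\eps^2/2\le r\le\eps^2\}$ the Hessian bound indeed gives $\inner{B^{-1}g_\eps,g_\eps}\lesssim|f|^2$, but $e^{-\phi_\eps}\sim (M+|\log\eps^2|)\,\eps^{-2k}$ there, and the slab has $w''$-volume $\sim\eps^{2k}$; the Lebesgue-type differentiation theorem normalizes the $\eps^{-2k}$ but cannot absorb the factor $M+|\log\eps^2|$ coming from the $-\log(M-\log(\cdot))$ part of the weight. Hence your right-hand side diverges like $|\log\eps^2|\cdot\int_{U\times\{z''\}}|f|^2e^{-\varphi}$, and no uniform bound survives the limit $\eps\to0$. (A secondary consequence: with your regularized $k\log(r+\eps^2)$ on the left-hand side, nothing forces $u_\eps|_{U\times\{z''\}}=0$ for fixed $\eps$, so the interpolation claim also needs the un-regularized $r^{-k}$ weight that only the correct estimate produces.)

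The missing idea is Chen's minimal-solution trick, which is the heart of the paper's Step Two and cannot be skipped when the only hypothesis is the black-box $L^2$-optimal estimate. One first solves $\bp u=v$ with the weight $e^{-\varphi-\tau(\phi)}$, $\tau(\phi)=k\log r$ (no $\chi$-term), taking $u$ to be the \emph{minimal} solution; since $\chi(\phi_\eps)$ is bounded plurisubharmonic on $U_j\times D_j$, the form $ue^{\chi(\phi_\eps)}$ is automatically the minimal solution of $\bp s=\bp(ue^{\chi(\phi_\eps)})$ in $L^2(e^{-\varphi_\tau-\chi(\phi_\eps)})$. Applying the $L^2$-optimal inequality to \emph{that} equation produces an extra term $\chi'(\phi_\eps)(r+\eps^2)^{-1}\bp r\wedge u$ which is absorbed into the left-hand side, yielding \eqref{formula right}; there the right-hand side carries the ratio $e^{\chi(\phi_\eps)}/\chi'(\phi_\eps)$, which is bounded, so the logarithms cancel and Lemma \ref{lem weak siu lemma} then gives the uniform constant. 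Without this twist (or some substitute such as a Berndtsson--Lempert-type argument, which you do not supply), the construction does not close. The remaining discrepancies --- invoking Lemma \ref{pro nak res nak} and Mazur's theorem, neither of which is needed here (Montel's theorem suffices for the limits since all $F_{\eps,j}$ are holomorphic), and attributing the role of $H$ to the holomorphy of $F_\eps-u_\eps$ rather than to the reduction to $\calI(\varphi)=\calO$ --- are cosmetic by comparison.
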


\begin{proof}
\vskip0.3cm
\noindent {\bf Step One: Reduce to the case that $\calI(\varphi)=\calO$.}
\vskip0.5cm
    Notice that $U\setminus H$ is Stein and $\calI(\varphi)=\calO$ on $(U\setminus H)\times D$. If  we have proven that for almost every $z''\in D$, there is a holomorphic function $F$ on $(U\setminus H)\times D$ such that $F=f$ on $(U\setminus H)\times\{z''\}$ and
\begin{equation*}
\int_{(U\setminus H)\times D}  \frac{|F|^2e^{-\varphi}}{ |w''-z''|^{2k}(\log |w''-z''|^2)^2} d\lambda_{n}(w)\leq C \int_{(U\setminus H)\times\{z''\}}|f|^2e^{-\varphi} d\lambda_{n-k}(w').
\end{equation*}
Since $\varphi$ is bounded from above locally, then by Lemma \ref{lem L2negligible}, the holomorphic function $F$ can extend holomorphically across $H\times D$ with the same estimate.

 If $k=n$, we may assume that $\calI(\varphi)\not\equiv0$. Then it follows from Lemma \ref{lem coherent} that $\supp\calO/\calI(\varphi)$ is a proper analytic subset of $D$. Since $D$ is Stein, there is a hypersurface $H\subset D$ such that  $\calI(\varphi)=\calO$ on $D\setminus H$ and $D\setminus H$ is also Stein.
 In summary, we may assume that $\calI(\varphi)=\calO$.

\vskip0.3cm
\noindent {\bf Step Two:   Construction an $L^2$ extension of openness type via  minimal  solutions \cite{chenboyong}.}
\vskip0.5cm

     Since $\calI(\varphi)=\calO$, Lemma \ref{lem weak siu lemma} implies the existence of a full-measure subset $A \subset D$ such that for every $z'' \in A$, the following holds: given any relatively compact subset $V \Subset U$ and any nonnegative continuous function $P \in C^0(\overline{V} \times D)$, we have  $$ \int_{V}P(w',z'')e^{-\varphi(w',z'')}d\lambda_{n-k}(w')<+\infty$$ and
     \begin{equation}\label{for basic}
     \lim_{\eps\to 0} \int_{\BB_{z''}(\eps)}\hspace{-3.05em}-\quad\quad
  \int_{V}P(w',w'') |e^{-\varphi(w',w'')}- e^{-\varphi(w',z'')}|d\lambda_{n-k}(w')
  d\lambda_n(w'')=0
  \end{equation}.
  Now fix $z''\in A$ and $f\in \calO(U\times\{z''\})$ with
$\int_{U\times\{z''\}}|f|^2e^{-\varphi}d\lambda_{n-k}(w')<+\infty$.
  Since $U\times D$ is Stein, by  Cartan's extension theorem, there is a holomorphic function $\hat{f}$ on $U\times D$ such that $\hat{f}|_{U\times\{z''\}}=f$. Take  Stein exhaustions $\{U_j\}$ and $\{D_j\}$ of $U$ and $D$ respectively.

  Let $C_D:=2e^2\sup_{w''\in D}|w''|^2$ and $r(w)=C_D^{-1}|w''-z''|^2$. Define $\phi=\log r$, $\phi_\varepsilon=\log (r+\varepsilon^2)$ and $\phi_{\delta,\varepsilon}(z)=\log (r+\delta^2+\varepsilon^2)=\log (e^{\phi_\delta}+\varepsilon^2)$ on $U\times D$.	
  Then $r\le e^{-2}$ and $\phi,\phi_\eps<-1$ for $\eps$ small enough.
	Take $\tau(t)=kt$ and $\chi(t)=-\log(-t+\log (-t))$ when $t<-1$, then
	$$\chi'=\frac{1-t^{-1}}{-t+\log(-t)}>0~~,~~ \chi''=\frac{(1-t^{-1})^2}{(-t+\log(-t))^2}+\frac{t^{-2}}{-t+\log(-t)}>0.$$	
    Let $0\le\rho:=\rho_{\varepsilon}\le1 $ be a smooth function on $\mathbb{R}$ such that $\rho=1$ on $(-\infty, a_\varepsilon)$, $\rho=0$ on $(b_\varepsilon,+\infty)$ and $|\rho'|\le \frac{1+c_\varepsilon}{b_\varepsilon-a_\varepsilon}$ on $(a_\varepsilon,b_\varepsilon)$, where $b_\varepsilon>a_\varepsilon>0$, $c_\varepsilon>0$ are to be determined later and $\lim_\varepsilon b_\varepsilon=\lim_\varepsilon c_\varepsilon=0$.

    Write $w=(w',w'')=(w_1,\cdots,w_{n-k},w_{n-k+1},\cdots,w_n)$. Let $\omega=\sum_{j=1}^{n}\frac{i}{2}dw_j\wedge d\bar{w}_j$. Then $dV_\omega:=\frac{\omega^n}{n!}=d\lambda_n(w)$.
	 Notice that $v:=\hat{f}dw\wedge \overline{\partial}\rho(r)$ is a $\dbar$-closed (n,1)-form on $U\times D$ and for $\varepsilon>0$ small enough, we have
	\begin{align*}
	\int_{U_j\times D_j} |v|^2_{\omega} e^{-\varphi-\tau(\phi)}dV_\omega
	= \int_{\{a_\varepsilon\le r \le b_\varepsilon\}} |\hat{f}|^2|\rho'|^2|dw\wedge \dbar r|_\omega^2 e^{-\varphi-\tau(\log r)}dV_{\omega}<+\infty,
	\end{align*}
	where $\eps$ small enough.

Since $\tau(\phi)$ is plurisubharmonic, it follows from Lemma \ref{lem h^} that $\varphi_\tau=\varphi+\tau(\phi)$ is also $L^2$-optimal.
	Then there exists a solution $\hat{u}$ of $\overline{\partial}\hat u=v$ such that
	\begin{align*}
	 \int_{U_j\times D_j} |\hat{u}|_\omega^2e^{-\varphi_\tau-\Psi}dV_{\omega}\le
	 \int_{U_j\times D_j} \langle B_{\omega,\Psi}^{-1}v,v\rangle_{\omega}e^{-\varphi_\tau-\Psi}dV_{\omega}<+\infty,
	 \end{align*}
	where $\Psi$ is a positive, bounded, smooth and strictly plurisubharmonic exhaustion function on $U\times D$. Take $u:=u_{\varepsilon,j}$ be the solution of $\overline{\partial}u=v$ with minimal norm in $L^2_{(n,0)}({U_j\times D_j},e^{-\varphi_\tau})$, that is, $u \perp  \Ker \overline{\partial}$ in $L^2_{(n,0)}({U_j\times D_j},e^{-\varphi_\tau})$.
	Since $\chi(\phi_{\varepsilon})$ is a smooth plurisubharmonic function on $U\times D$ and hence bounded on $U_j\times D_j$, we have $ue^{\chi(\phi_{\varepsilon})}\perp\Ker \overline{\partial}$ in $L^2_{(n,0)}({U_j\times D_j},  e^{-\varphi_\tau-\chi(\phi_{\varepsilon})})$, which means that $ue^{\chi(\phi_{\varepsilon})}$ is the minimal solution of $\dbar s=\dbar(ue^{\chi(\phi_{\varepsilon})})$ in $L^2_{(n,0)}({U_j\times D_j},  e^{-\varphi_\tau-\chi(\phi_{\varepsilon})})$.
	
Let  $\psi_{\delta}=\tau(\phi_\delta)+\chi(\phi_{\delta,\varepsilon})+\delta\Psi$, then $\lim_{\delta\to0}\psi_\delta=\varphi_\tau+\chi(\phi_{\varepsilon}).$
	Since $\log r$ is plurisubharmonic, we have $i  r\ddbar r \ge i \partial r\wedge \dbar r$. Therefore,
\begin{align*}
	i\ddbar \psi_\delta
	\ge 	\left(  \frac{\chi''}{(r+\varepsilon^2+\delta^2)^2}+
	\frac{(\delta^2+\eps^2)\chi'}{r(r+\delta^2+\varepsilon^2)^2}\right) \cdot i\partial r\wedge \dbar r,
		\end{align*}
	where $\chi'=\chi'(\phi_{\delta,\eps}), \chi''=\chi''(\phi_{\delta,\eps})$.
	
    Then for fixed $\eps>0$, by the dominated convergence theorem,  we have
	\begin{align*}
	& \limsup_{\delta\to0}\int_{U_j\times D_j} \langle B_{\omega,\psi_\delta}^{-1}\overline{\partial}(ue^{\chi(\phi_\varepsilon) }), \overline{\partial}(ue^{\chi(\phi_\varepsilon) })\rangle_\omega e^{-\psi_{\delta}(z)-\varphi}dV_{\omega}\\
	=&\limsup_{\delta\to 0}\int_{{U_j\times D_j}}|\dbar r|^2_{i \ddbar \psi_{\delta}} |\rho' \hat{f}+(r+\varepsilon^2)^{-1}\chi'  u|^2_{\omega}e^{2\chi(\phi_\varepsilon)-\psi_{\delta}-\varphi}dV_{\omega}\\
	\le& \int_{ U_j\times D_j}
	\frac{r|(r+\varepsilon^2)\rho'(r) \hat{f}+\chi'(\phi_\varepsilon) u|^2_{h}~e^{\chi(\phi_\varepsilon)-\tau(\phi)}}
	{r{\chi''(\phi_\varepsilon)}+\chi'(\phi_\varepsilon)\varepsilon^2}dV_{\omega}=:I,
 \end{align*}
 where the last integration $I$ is finite. Since $\psi_\delta$ is smooth strictly plurisubharmonic function decreasingly converging to $\chi(\phi_\varepsilon)+\tau(\phi)$,
then by a standard approximating procedure,  $\overline{\partial}s=\overline{\partial}(ue^{\chi(\phi_\varepsilon)})$ have a solution    with the estimate
\begin{align*}
	\int_{U_j\times D_j} |s|^2_{\omega}e^{-\varphi-\chi(\phi_\varepsilon)-\tau(\phi)}dV_{\omega}
	\le I.
	\end{align*}
	Noticing that $ue^{\chi(\phi_\varepsilon)}$ is the minimal solution of the equation $\dbar s=\dbar(ue^{\chi(\phi_{\varepsilon})})$ in $L^2_{(n,0)}({U_j\times D_j},  e^{-\varphi_\tau-\chi(\phi_{\varepsilon})})$, we have
	\begin{align*}
	\int_{U_j\times D_j} |ue^{\chi(\phi_\varepsilon)}|^2_{\omega}e^{-\varphi-\chi(\phi_\varepsilon)-\tau(\phi)}dV_{\omega}
	\le I.
	\end{align*}
Since
	\begin{align*}
	|(r+\varepsilon^2)\rho'(r) \hat{f}dw+\chi'(\phi_\varepsilon) u|^2_{\omega}
	\le 2(r+\varepsilon^2)^2|\rho'|^2|\hat{f}|^2+(1+\mathbb{I}_{\supp\rho'(r)})
|\chi'(\phi_\varepsilon)|^2|u|^2_\omega,
	\end{align*}
    	we obtain that
	\begin{align}\label{formula right}
&	\int_{U_j\times D_j} \Big(1- \frac{r(1+\mathbb{I}_{\supp\rho'(r)})|\chi'(\phi_\varepsilon)|^2}
	{r{\chi''(\phi_\varepsilon)}+\chi'(\phi_\varepsilon)\varepsilon^2}\Big)
	|u|^2_\omega e^{\chi(\phi_\varepsilon)-\tau(\phi)-\varphi}dV_{\omega}\\\nonumber
	\le&
	\int_{U_j\times D_j } \frac{2r(r+\varepsilon^2)^2\rho'(r)^2}
	{\chi'(\phi_\varepsilon)\varepsilon^2}|\hat{f}|^2  e^{\chi(\phi_\varepsilon)-\tau(\phi)-\varphi}dV_{\omega}.
	\end{align}
	
	Let us denote the left-hand side and right-hand side of equation  \eqref{formula right} by $I_1, I_2$ respectively.
	Next we need to control $I_1$ and $I_2$ as $\eps$ goes to $0$.

\vskip0.3cm
\noindent {\bf Step Three: Apply a Lebesgue-type differentiation theorem to control $I_2$.}
\vskip0.5cm	
	
	Recall that $ \tau(t)=kt,\chi(t)=-\log(-t+\log (-t))$,
	 and
	$\chi'=\frac{1-t^{-1}}{-t+\log(-t)},$
	then ${e^\chi}/\chi'=1/(1-t^{-1})\le 1/2$.
	Since $|\rho'|\le \frac{1+c_\varepsilon
	}{b_\varepsilon-a_\varepsilon}$, we get
\begin{align*}
    I_2
	\le \int_{(U_j\times D_j)\cap\{a_\varepsilon\le r\le b_\varepsilon\}  }
	\frac{2r(r+\varepsilon^2)^2
	}{\varepsilon^2(b_\varepsilon-a_\varepsilon)^2r^{k}  }|\hat{f}|^2 e^{-\varphi}dV_{\omega}.
		\end{align*}
		
    Take $a_\varepsilon=a\varepsilon^2, b_\varepsilon=b\varepsilon^2$ where $b>a>0$, then it follows immediately that when $a_\eps\le r\le b_\eps$,
	$$
	\frac{2r(r+\varepsilon^2)^2
	}{\varepsilon^2(b_\varepsilon-a_\varepsilon)^2  }
	\le C_1	,
	$$
	since $\phi_\eps=\log(r+\eps^2)\le -1$.
	
	Thus
\begin{align}\label{111}
	I_2
	\le C_1\int_{(U_j\times D_j)\cap\{a_\varepsilon\le r\le b_\varepsilon\}  }
	\frac{|\hat{f}|^2
	}{r^{k}  }d\lambda_n.
\end{align}

		By \eqref{for basic}, we  obtain that
	\begin{align*}
	\lim_{\varepsilon\to0} \frac{1}{\varepsilon^{2k}}\int_{(U_j\times D_j)\cap\{r\le b\varepsilon^2\}} |\hat{f}|_\omega^2e^{-\varphi}dV_{\omega}
	= \frac{(C_Db\pi)^{k}}{k!}\int_{U_j\times \{z''\}}|f|^2e^{-\varphi(w',z'')}d\lambda_{n-k}.
	\end{align*}
	Together with \eqref{111}, we have
\begin{align}\label{a1}
	\limsup_{\eps\to0}
	I_2	\le C_2\int_{U\times\{z''\}}|f|^2e^{-\varphi(w',z'')}d\lambda_{n-k},
\end{align}
	where  $C_2=\frac{C_1(C_Db\pi)^{k}}{k!a^k}$.

\vskip0.3cm
\noindent {\bf Step Four: Direct computation for estimating $I_1$.}
\vskip0.5cm	
    When $\mathbb{I}_{\supp\rho'(r)}=0$, we have {\small	
	\begin{align*}
	\Big(1-(1+\mathbb{I}_{\supp\rho'(r)}) \frac{r|\chi'(\phi_\eps)|^2}
	{r{\chi''(\phi_\eps)}+\chi'(\phi_\eps)\eps^2}\Big)
	e^{\chi(\phi_\eps)}
	&\ge\Big(1- \frac{|\chi'(\phi_\eps)|^2}
	{{\chi''(\phi_\eps)}}\Big)
	e^{\chi(\phi_\eps)}
	\ge C_3 (-\phi_\eps)^{-2}
	\end{align*}}
	for some positive constant $C_3$ independent of $\eps$ since $-\phi_\eps=-\log(r+\eps^2)\ge 1$.
	
    When $\mathbb{I}_{\supp\rho'(r)}=1$, we have $a\eps^2\le r \le b\eps^2$. Since $$-\log({(b+1)\eps^2})\le -\phi_\eps\le -\log({(a+1)\eps^2})$$ for $\eps$ small enough, we get that
    $
    \chi'(\phi_\eps)=\frac{1-\phi_\eps^{-1}}{-\phi_\eps+\log(-\phi_\eps)} \thicksim\frac{1}{-\phi_\eps}$ and $e^{\chi(\phi_\eps)}=\frac{1}{-\phi_\eps+\log(-\phi_\eps)}\thicksim\frac{1}{-\phi_\eps}$.
    Therefore,
    \begin{align*}
	\Big(1- \frac{2r|\chi'(\phi_\eps)|^2}
	{r{\chi''(\phi_\eps)}+\chi'(\phi_\eps)\eps^2}\Big)
	e^{\chi(\phi_\eps)}
	\ge & \Big(1- \frac{2r|\chi'(\phi_\eps)|^2}
	{\chi'(\phi_\eps)\eps^2}\Big)
	e^{\chi(\phi_\eps)}
	\ge  C_4(- \phi_\eps)^{-2}
	\end{align*}
	for some positive constant $C_4$ independent of $\eps$.
	
    Thus for $\varepsilon>0$ small enough, the LHS of \eqref{formula right}
	\begin{align}
	I_1
	\ge
	C_5\int_{U_j\times D_j}
	\frac{|u|^2_\omega e^{-\varphi}}{r^k (\log({r+\eps^2}))^2}dV_{\omega}.
	\end{align}
	
	Combining this with \eqref{formula right} and \eqref{a1}, we obtain for $\varepsilon>0$ small enough
	\begin{align}\label{form C_6}
	\int_{U_j\times D_j}
	\frac{|u|^2_\omega e^{-\varphi}}{r^k (\log({r+\eps^2}))^2}dV_{\omega}
	\le
	C_6
	\int_{U\times\{z''\}} |f|^2e^{-\varphi} d\lambda_{n-k}.
	\end{align}
\vskip0.3cm
\noindent {\bf Step Five: Construct a holomorphic extension with the demanded estimate. }
\vskip0.5cm	

    Since $\dbar u=\hat{f}dw\wedge  \dbar \rho(r)=0$ in $\{r< a_\varepsilon\}\cap U_j\times D_j$,  $u$ is holomorphic near $ U_j\times \{z''\}$.
	 Since $r^{-k}$ is not integrable near $U\times\{z''\}$, combining  \eqref{form C_6}, we get that $u|_{ U_j\times \{z''\}}=0$.
	Let $F_{\varepsilon,j}=\hat{f}\rho(r)-\frac{u}{dw}$ on $U_j\times D_j$, then $\dbar F_{\varepsilon,j}=0$ and $F_{\varepsilon,j}=f$ on $  U_j\times \{z''\}$. We obtain
    that
\begin{align*}
	&\int_{U_j\times D_j}
	|F_{\varepsilon,j}|^2e^{-\varphi} d\lambda_n\\
	\le&
	2\int_{U_j\times D_j}
	|\hat{f}\rho(r)|^2e^{-\varphi}dV_{\omega}
	+2C_7\int_{U_j\times D_j}
	\frac{|u|^2_\omega e^{-\varphi}}{r^k (\log({r+\eps^2}))^2}dV_{\omega},
\end{align*}	
	where $C_7=\sup_{0\le r<e^{-1}} r^k(\log r)^2$.

	Hence $\{F_{\varepsilon,j}\}_\eps$ is  bounded on $L^2(U_j\times D_j)$ for each fixed $j$ by (\ref{form C_6}).
	Using Montel's theorem we can choose a subsequence $F_{\varepsilon_k,j}$ compactly converging to a holomorphic function $F_j$ on $U_j\times D_j$.
In addition, we have
\begin{align}\label{ineq restricted}
	&\int_{(U_j\times D_j)\cap\{r\ge a_\eps\}}
	\frac{|F_{\varepsilon,j}|^2e^{-\varphi}}{r^k (\log{r})^2}dV_{\omega}\nonumber	\\
    \le& 2\int_{(U_j\times D_j)\cap\{r\ge a_\eps\}}
    \frac{|\hat{f}\rho(r)|^2e^{-\varphi}}{r^k (\log{r})^2}dV_{\omega}+2\int_{U_j\times D_j}
	\frac{|u|^2_\omega e^{-\varphi}}{r^k (\log{r})^2}dV_{\omega}\nonumber\\  \le&
    2\int_{(U_j\times D_j)\cap\{r\ge a_\eps\}}
    \frac{|\hat{f}\rho(r)|^2e^{-\varphi}}{r^k (\log({r+\eps^2}))^2}dV_{\omega}
    + C_8\int_{U\times\{z''\}} |f|^2e^{-\varphi} d\lambda_{n-k}.
\end{align}
	
By \eqref{for basic}, we have
\begin{align*}
 &\lim_{\eps\to0}\int_{U_j\times D_j\cap\{r\ge a_\eps\}}
	\frac{|\hat{f}\rho(r)|^2e^{-\varphi}}{r^k (\log({r+\eps^2}))^2}dV_{\omega}\\
 \le & \limsup_{\eps\to0}
	\frac{1}{(\log({b\eps^2+\eps^2}))^2}\int_{(U_j\times D_j)\cap\{ a_\eps\le r\le b_\eps\}}
	\frac{|\hat{f}|^2e^{-\varphi}}{r^k}dV_{\omega}\\
\le & \limsup_{\eps\to0}
	\frac{1}{(\log({b\eps^2+\eps^2}))^2}\cdot \frac{C_D^{k}}{a^k\eps^k}\int_{(U_j\times D_j)\cap\{ r\le b_\eps\}}
	|\hat{f}|^2e^{-\varphi}dV_{\omega}=0.
\end{align*}

Let $\eps$ goes to $0$ in \eqref{ineq restricted}, then by Fatou's lemma, we obtain that
	
\begin{align*}
	\int_{U_j\times D_j}
	\frac{|F_{j}|^2e^{-\varphi}}{r^k (\log{r})^2}dV_{\omega}	
	\le
	C_8\int_{U\times\{z''\}} |f|^2e^{-\varphi} d\lambda_{n-k}.
\end{align*}
	
	By a standard limitation argument via Montel's theorem again,  there is a subsequence of$\{F_j\}$ compactly converging to an $F\in \calO(U\times D)$ such that $F|_{U\times\{z''\}}=f$ and
\begin{align*}
	\int_{U\times D}
	\frac{|F|^2e^{-\varphi}}{r^k (\log{r})^2}dV_{\omega}	
	\le
	C_8\int_{U\times\{z''\}} |f|^2e^{-\varphi} d\lambda_{n-k}.
\end{align*}

 Noticing that $dV_{\omega}=d\lambda_n$ and $r=C_D^{-1}|w''-z''|^2$, we complete the proof.

\end{proof}

\section{Applications}\label{Sec:Appl}

\subsection{A characterization of plurisubharmonic functions}

Recall some notations and results in \cite{DNW21}.  A \textit{holomorphic cylinder} in $\mathbb{C}^n$ is a domain of the form $P_{A,r,s}:=A(\mathbb{D}_r\times\mathbb{B}_s^{n-1})$, where $A\in \textbf{U}(n)$ is unitary and $r,s>0$.
It is well-known that plurisubharmonic functions satisfy the mean value inequality on holomorphic cylinders. Conversely, this property characterizes all plurisubharmonic  functions.

\begin{lemma}[{\cite[Lemma 3.1]{DNW21}}] \label{Lemma:CharPsh}
Let $\varphi$ be an upper semi-continuous function on a domain $D\subset\mathbb{C}^n$, then $\varphi$ is plurisubharmonic  if and only if
\begin{equation*}
	\varphi(z) \le \frac{1}{\lambda_n(P)} \int_{z+P}\varphi d\lambda_n
\end{equation*}
 for any $z\in D$ and any holomorphic cylinder $z+P\Subset D$.
\end{lemma}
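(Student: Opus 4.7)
The plan is to reduce both directions to classical one-variable and Euclidean sub-mean value properties, exploiting the unitary invariance of holomorphic cylinders. After translating by $z$ and pulling back by $A\in\mathbf{U}(n)$, I may replace $\varphi$ by $\tilde\varphi(w)=\varphi(z+Aw)$, which preserves upper semi-continuity and plurisubharmonicity, and reduce the inequality to $\tilde\varphi(0)\le\lambda_n(P)^{-1}\int_{P}\tilde\varphi\, d\lambda_n$ for the standard cylinder $P=\mathbb{D}_r\times\mathbb{B}_s^{n-1}$, whose volume is $\pi r^2\,\lambda_{n-1}(\mathbb{B}_s^{n-1})$.

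For the forward implication, I would combine two sub-mean value inequalities via Fubini. For each $w'\in\mathbb{B}_s^{n-1}$, the slice $\zeta\mapsto\tilde\varphi(\zeta,w')$ is subharmonic on $\mathbb{D}_r$, so the classical disk mean value inequality yields
\[
\tilde\varphi(0,w')\le\frac{1}{\pi r^2}\int_{\mathbb{D}_r}\tilde\varphi(\zeta,w')\, d\lambda_1(\zeta).
\]
Moreover, $w'\mapsto\tilde\varphi(0,w')$ is plurisubharmonic (or identically $-\infty$) on $\mathbb{B}_s^{n-1}$, hence subharmonic on $\mathbb{R}^{2(n-1)}$, so the Euclidean ball mean value gives $\tilde\varphi(0)\le\lambda_{n-1}(\mathbb{B}_s^{n-1})^{-1}\int_{\mathbb{B}_s^{n-1}}\tilde\varphi(0,w')\, d\lambda_{n-1}(w')$. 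Chaining these two inequalities produces the required cylinder estimate.

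For the converse, I would recover the one-variable sub-mean value property of $\varphi$ along every complex direction from the cylinder hypothesis. Fix a unit vector $v\in\mathbb{C}^n$ and $r>0$ with $\{z+\zeta v:|\zeta|\le r\}\Subset D$; choose $A\in\mathbf{U}(n)$ with $Ae_1=v$, so that $z+P_{A,r,s}\Subset D$ for all sufficiently small $s>0$. Setting
\[
g(w')=\int_{\mathbb{D}_r}\varphi\bigl(z+A(\zeta,w')\bigr)\, d\lambda_1(\zeta),
\]
the hypothesis becomes $\varphi(z)\le(\pi r^2)^{-1}\lambda_{n-1}(\mathbb{B}_s^{n-1})^{-1}\int_{\mathbb{B}_s^{n-1}}g(w')\, d\lambda_{n-1}(w')$. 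Letting $s\to 0$: since $\varphi$ is upper semi-continuous and locally bounded above near the closed disk, the reverse Fatou lemma yields $\limsup_{w'\to 0}g(w')\le g(0)$, so $g$ is upper semi-continuous at $0$, and its ball averages at $0$ satisfy $\limsup_{s\to 0}\le g(0)$. This gives
\[
\varphi(z)\le\frac{1}{\pi r^2}\int_{\mathbb{D}_r}\varphi(z+\zeta v)\, d\lambda_1(\zeta)
\]
for every unit $v$ and admissible $r$, which is the classical sub-mean value characterization of plurisubharmonic upper semi-continuous functions.

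The main technical step is the $s\to 0$ passage in the converse: it rests on the local upper boundedness of the upper semi-continuous $\varphi$ to justify reverse Fatou, and on verifying that $g(w')$ is well-defined near $0$, which also follows from local upper boundedness on the (relatively compact) cylinder. All remaining steps are direct applications of classical one-variable sub-mean value inequalities, the Euclidean ball mean value property for subharmonic functions, and Fubini's theorem.
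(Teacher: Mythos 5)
The paper does not prove this lemma; it is quoted verbatim from \cite[Lemma 3.1]{DNW21}, so there is no internal proof to compare against. Your argument is a correct, self-contained proof along the classical lines one would expect: the forward direction by chaining the disk area sub-mean value inequality in the $\mathbb{D}_r$-slices with the Euclidean ball sub-mean value inequality for the plurisubharmonic restriction $w'\mapsto\tilde\varphi(0,w')$, and the converse by shrinking the ball factor $s\to 0$ to recover the area sub-mean value inequality on every complex disk through $z$. The one genuinely delicate step, passing to the limit $s\to 0$ via reverse Fatou using local upper boundedness of the upper semi-continuous $\varphi$ on the compact closure of the cylinder, is handled correctly, and the area-average (rather than circle-average) sub-mean value property does characterize subharmonicity of the restrictions for upper semi-continuous functions, so the final citation of the classical characterization is legitimate.
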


Notice that any plurisubharmonic function is strongly upper semi-continuous. Moreover, a strongly upper semi-continuous function is plurisubharmonic if and only if it agrees almost everywhere with a plurisubharmonic function. This leads to the following variant of Lemma \ref{Lemma:CharPsh}:

\begin{lemma}\label{lem susc charpsh}
  Let $\varphi$ be  a strongly upper semi-continuous function on a domain $D\subset\CC^n$. Assume that for almost every $z\in D$ and any holomorphic cylinder $z+P\Subset D$,
  \begin{equation}\label{for mean ineq}
	\varphi(z) \le \frac{1}{\lambda_n(P)} \int_{z+P}\varphi d\lambda_n.
\end{equation}
\end{lemma}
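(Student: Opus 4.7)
The plan is to reduce to Lemma \ref{Lemma:CharPsh} by upgrading the mean-value inequality \eqref{for mean ineq} from an almost-everywhere statement to a pointwise statement, exploiting strong upper semi-continuity of $\varphi$.

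Fix an arbitrary $z_0 \in D$ and a holomorphic cylinder $P$ with $z_0 + P \Subset D$. On a small neighborhood $V$ of $z_0$, for which $z + P \Subset D$ for every $z \in V$, set
$$u(z) := \frac{1}{\lambda_n(P)} \int_{z+P} \varphi \, d\lambda_n.$$
The first step is to verify that $u$ is upper semi-continuous on $V$. Since $\varphi$ is upper semi-continuous it is locally bounded above, so each truncation $\varphi_N := \max(\varphi, -N)$ is bounded and locally integrable. By continuity of translation in $L^1$, the functions
$$u_N(z) := \frac{1}{\lambda_n(P)} \int_{z+P} \varphi_N \, d\lambda_n$$
are continuous on $V$, and $u_N \searrow u$ pointwise by monotone convergence. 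Hence $u = \inf_N u_N$ is upper semi-continuous.

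Now let $S \subset D$ denote the measure-zero exceptional set where \eqref{for mean ineq} fails. By the strong upper semi-continuity of $\varphi$ at $z_0$ applied to the measure-zero set $S$, together with the hypothesis on $V \setminus S$,
$$\varphi(z_0) = \limsup_{\substack{z \to z_0 \\ z \in V \setminus S}} \varphi(z) \le \limsup_{\substack{z \to z_0 \\ z \in V \setminus S}} u(z) \le u(z_0),$$
the last inequality by upper semi-continuity of $u$. Since $z_0$ and $P$ were arbitrary, Lemma \ref{Lemma:CharPsh} yields that $\varphi$ is plurisubharmonic.

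The main technical point is justifying that $u$ itself is upper semi-continuous even when $\varphi$ can take the value $-\infty$; the truncate-and-pass-to-the-infimum step handles this cleanly. Everything else is a short combination of strong upper semi-continuity (to pass the almost-everywhere inequality to the point $z_0$) with the classical mean-value characterization in Lemma \ref{Lemma:CharPsh}.
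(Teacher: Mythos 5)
Your proof is correct and takes essentially the same route as the paper: both use strong upper semi-continuity to approach an arbitrary point through points where the mean-value inequality holds, then pass to the limit in the averaged integral and invoke Lemma \ref{Lemma:CharPsh}. The only (minor) difference is that you obtain upper semi-continuity of $z\mapsto \frac{1}{\lambda_n(P)}\int_{z+P}\varphi\,d\lambda_n$ by truncating $\varphi$ and taking a decreasing limit of continuous averages, whereas the paper first notes $\varphi\in L^1(D;\loc)$ and uses continuity of the integral under translation.
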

\begin{proof}
  Since $\varphi$ is bounded from above, together with \eqref{for mean ineq}, we know that $\varphi\in L^{1}(D;\loc)$.  For any $z\in D$ and any holomorphic cylinder $z+P\subset D$, we can choose $z_j\in D$ such that  $z_j+P\Subset D$, $\lim_{j\to +\infty}\varphi(z_j)=\varphi(z)$ and \begin{equation*}
	\varphi(z_j) \le \frac{1}{\lambda_n(P)} \int_{z_j+P}\varphi d\lambda_n.
\end{equation*}
Then by the absolute continuity property of the Lebesgue integral, we conclude that \begin{align*}
       \varphi(z)=  \lim_{j\to +\infty}\varphi(z_j)
       \le \lim_{j\to +\infty}\frac{1}{\lambda_n(P)} \int_{z_j+P}\varphi d\lambda_n
       \le  \frac{1}{\lambda_n(P)} \int_{z+P}\varphi d\lambda_n.
     \end{align*}
Hence $\varphi$ is plurisubharmonic by Lemma \ref{Lemma:CharPsh}.
\end{proof}

\begin{remark}
 Indeed, if we assume that $\varphi$ is only upper semi-continuous, then by employing convolution techniques, one can verify that the regularized functions $\varphi_\varepsilon$ satisfy the mean-value inequality \eqref{for mean ineq} for almost every $z\in D_\eps:=\{z\in D;{\rm dist}(z,\partial D)\}$. By continuity of $\varphi_\varepsilon$, this inequality in fact holds for every $z \in D_\varepsilon$. It follows from Lemma \ref{Lemma:CharPsh} that $\varphi_\eps$ is plurisubharmonic, which means that $i\ddbar \varphi_\eps\ge 0$ in the sense of currents. Since $\varphi_\eps$ converges to $\varphi$ in $L^1(D;\loc)$, then we know that  $i\ddbar \varphi\ge 0$ as a current. Therefore, $\varphi$ agrees almost everywhere with a plurisubharmonic function.
\end{remark}

\begin{definition}[\cite{DNW21}]\label{def DNW}
  Let   $\varphi$ be  an upper semi-continuous function on  a domain $D\subset \CC^n$.
  \begin{enumerate}
    \item We say that $\varphi$  satisfies the \textit{multiple coarse $L^2$-estimate property} if for any Stein coordinates $U\subset D$, for any smooth strictly plurisubharmonic function $\phi$ and
   any K\"{a}hler metric $\omega$ on $U$,
   the equation $\dbar u=f$ can be solved on $U$ for any $\dbar$-closed  $(n,1)$-form
   $f \in L^2_{(n,1)}(U; \loc)$
   with the estimate:
   \begin{equation*}
   \int_U|u|^2_{\omega} e^{-\varphi-\phi} dV_\omega
   \leq
   C_m\int_U \inner{B_{\omega,\phi}^{-1}f,f}_{\omega} e^{-m\varphi-\phi} dV_{\omega},
  \end{equation*}
   provided that the right-hand side is finite, where  $C_m$'s are constants such that $\lim_{m\to+\infty}\frac{\log C_m}{m}=0$.
   \item We say that $\varphi$  satisfies the \textit{multiple coarse $L^2$-extension property} if for any $z\in D$ with $\varphi(z)>-\infty$, and any holomorphic cylinder $z+P\Subset D$, there is a holomorphic function $f\in\calO(z+P)$ such that $f(z)=1$    with the estimate:
   \begin{equation*}
  \int_{z+P}|f|^2e^{-m\varphi} d\lambda_n
   \leq
   C_m e^{-m\varphi(z)},
  \end{equation*}
    where   $C_m$'s are constants such that $\lim_{m\to+\infty}\frac{\log C_m}{m}=0$.
  \end{enumerate}
\end{definition}
Deng-Ning-Wang  gave the following characterizations of plurisubharmonic functions via the above $L^2$ conditions.
\begin{theorem}[\cite{DNW21}]\label{thm DNW}
 Let   $\varphi$ be  an upper semi-continuous function on  a domain $D\subset \CC^n$,
  \begin{enumerate}
    \item If $\varphi$ satisfies the multiple coarse $L^2$-extension property, then $\varphi$ is plurisubharmonic.
    \item If $\varphi$ is continuous and satisfies the multiple coarse $L^2$-estimate property, then $\varphi$ satisfies the multiple coarse $L^2$-extension property and hence is plurisubharmonic.
  \end{enumerate}
\end{theorem}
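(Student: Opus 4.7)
The forward direction is standard: plurisubharmonic functions are classically strongly upper semi-continuous, and on any relatively compact Stein coordinate $U \Subset D$ they satisfy H\"ormander's $L^2$-estimate, which is the $m=1$ case of the multiple coarse property with $C_1=1$. For $m \geq 2$, I would apply H\"ormander to the plurisubharmonic weight $m\varphi$ together with the pointwise comparison $e^{-\varphi-\phi} \leq e^{(m-1)\sup_U\varphi}\,e^{-m\varphi-\phi}$; a local normalization $\varphi \mapsto \varphi - \sup_U\varphi$ on each Stein coordinate yields constants with $\log C_m / m \to 0$.

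For the backward direction I first prove the stronger statement in the remark following the theorem: an upper semi-continuous $\varphi$ with the multiple coarse $L^2$-estimate property agrees almost everywhere with a plurisubharmonic function. Granting this, strong upper semi-continuity upgrades the agreement to equality everywhere, since if $\varphi = \varphi'$ outside a null set $S$ and both are strongly upper semi-continuous, then for every $z \in D$,
\[
\varphi(z) = \limsup_{\substack{w \to z\\ w \notin S}} \varphi(w) = \limsup_{\substack{w \to z\\ w \notin S}} \varphi'(w) = \varphi'(z),
\]
using strong upper semi-continuity of $\varphi$ on the left and of the plurisubharmonic $\varphi'$ on the right.

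The plan for the remark is to adapt the proof of Theorem \ref{Thm:OT ext} in the case $k=n$, replacing $L^2$-optimality by the multiple coarse $L^2$-estimate with parameter $m$, and specializing Proposition \ref{prop Lebesgue hyperplane} to $k=n$ (the classical Lebesgue differentiation theorem applied to $e^{-m\varphi}$ on the open full-measure set where $\calI(m\varphi)=\calO$, cf.\ Lemma \ref{lem coherent} and Remark \ref{rmk coherent}) in place of the Siu-type Lemma \ref{lem weak siu lemma}. The target is to produce, for almost every $z \in D$, every holomorphic cylinder $z + P \Subset D$, and every $m \geq 1$, a function $F_m \in \calO(z+P)$ with $F_m(z) = 1$ satisfying
\[
\int_{z+P} |F_m|^2 e^{-m\varphi}\, d\lambda_n \leq \widetilde{C}_m\, e^{-m\varphi(z)}, \qquad \frac{\log \widetilde{C}_m}{m} \to 0.
\]
Jensen's inequality for the probability measure $d\lambda_n / \lambda_n(P)$ on $z+P$ then gives
\[
\log \frac{\widetilde{C}_m e^{-m\varphi(z)}}{\lambda_n(P)} \geq \frac{1}{\lambda_n(P)} \int_{z+P} \bigl(\log|F_m|^2 - m\varphi\bigr)\, d\lambda_n \geq -\frac{m}{\lambda_n(P)}\int_{z+P} \varphi\, d\lambda_n,
\]
using $\int_{z+P} \log|F_m|^2\, d\lambda_n \geq \lambda_n(P) \log|F_m(z)|^2 = 0$ by the sub-mean-value inequality for the plurisubharmonic $\log|F_m|^2$. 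Dividing by $m$ and letting $m \to \infty$ yields the mean-value inequality $\varphi(z) \leq \frac{1}{\lambda_n(P)}\int_{z+P} \varphi\, d\lambda_n$ for almost every $z$ and every cylinder $P$ (applying the bound first to a countable dense family of cylinders, then to arbitrary $P$ by monotone approximation). The convolution/regularization argument in the remark after Lemma \ref{lem susc charpsh} (verify the mean-value inequality for $\varphi_\varepsilon$, conclude $i\ddbar\varphi_\varepsilon \geq 0$, pass to the $L^1_{\loc}$-limit) then produces a plurisubharmonic function equal to $\varphi$ almost everywhere.

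The principal technical obstacle is constructing $F_m$ with the matching-weight estimate above. In Theorem \ref{Thm:OT ext} the $L^2$-optimality of $\varphi$ naturally yields $e^{-\varphi}$ on both sides of the $\dbar$-estimate, whereas the multiple coarse property outputs $e^{-\varphi}$ on the left and $e^{-m\varphi}$ on the right, and this mismatch must be reconciled before the final extension estimate can have $e^{-m\varphi}$ on both sides. I anticipate resolving this by absorbing an auxiliary factor of $(m-1)\varphi_\tau$ into the smooth strictly plurisubharmonic weight appearing in Steps Two and Four of the proof of Theorem \ref{Thm:OT ext}, after a smoothing approximation $\varphi_\delta := \max(\varphi, -1/\delta)$: the multiple coarse property applied with parameter $m$ and modified auxiliary weight $\phi + (m-1)\varphi_{\tau,\delta}$ delivers a matching-weight $\dbar$-estimate for $\varphi_\delta$, and taking $\delta \to 0$ recovers the desired extension for $\varphi$. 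Propagating the constants through this construction in such a way that $\log \widetilde{C}_m / m \to 0$ survives is essential, as it is precisely what makes the Jensen argument give $\varphi(z) \leq$ mean value without a lossy multiplicative factor.
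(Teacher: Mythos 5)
This statement is quoted from \cite{DNW21}; the paper does not prove it, so there is no internal proof to compare against --- the closest analogue is the paper's proof of its strengthening, Theorem \ref{thm DNW mod}. Measured against the statement itself, your proposal is misaligned in two ways. Part (1) (the multiple coarse $L^2$-\emph{extension} property implies plurisubharmonicity) is never addressed; it is proved in \cite{DNW21} by exactly the Jensen computation you carry out, but you apply that computation only after constructing the extensions, rather than recognizing that part (1) \emph{is} that computation. And your ``forward direction'' (plurisubharmonic $\Rightarrow$ multiple coarse $L^2$-estimate property) is not a claim of either part of the theorem. What you have actually sketched is Theorem \ref{thm DNW mod} together with the remark following it. Within that sketch, the Jensen step and the passage through Lemma \ref{lem susc charpsh} coincide with the paper's argument and are fine; note, however, that part (2) asserts the extension property at \emph{every} $z$ with $\varphi(z)>-\infty$ as an intermediate conclusion, whereas your construction (like Theorem \ref{Thm:OT ext}) only produces extensions at almost every $z$, so even for part (2) something extra is needed to recover the statement as written.

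The more serious problem is the step you yourself call ``the principal technical obstacle'': it is the crux, and your proposed fix does not work. To make the Jensen limit give $\varphi(z)\le\frac{1}{\lambda_n(P)}\int_{z+P}\varphi$ you genuinely need $\int_{z+P}|F_m|^2e^{-m\varphi}\le\widetilde C_m e^{-m\varphi(z)}$ with $\log\widetilde C_m/m\to 0$; the coarse estimate only controls $\int|u|^2e^{-\varphi-\phi}$ on the left. Your plan to absorb $(m-1)\varphi_{\tau,\delta}$ into the auxiliary weight is circular: Definition \ref{def DNW}(1) requires $\phi$ to be smooth and \emph{strictly plurisubharmonic}, and neither $(m-1)\varphi$ nor its truncation $\max(\varphi,-1/\delta)$ is known to be plurisubharmonic --- that is precisely what the theorem is trying to establish. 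The only unconditional conversion, $e^{-(m-1)\varphi}\le e^{(m-1)\sup_P(-\varphi)}$, inflates the constant to $\widetilde C_m\sim C_m e^{(m-1)\sup_P(-\varphi)}$, so that $\log\widetilde C_m/m\to\sup_P(-\varphi)\ne 0$ and the Jensen limit degenerates. Closing this gap is where the real work of \cite{DNW21} lies (for continuous $\varphi$ one exploits the small oscillation of $\varphi$ on shrinking cylinders; the present paper's Theorem \ref{thm DNW mod} instead routes through the single-point case of Theorem \ref{Thm:OT ext}); as written, your proposal does not supply that argument.
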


   As an application of the single-point case in Theorem \ref{Thm:OT ext}, we can relax the continuity requirement in Theorem \ref{thm DNW}-(2) to strong upper semi-continuity - the minimal regularity condition necessary for plurisubharmonicity.

\begin{theorem}[=Theorem \ref{thm DNW mod}]
   A measurable function $\varphi:D\subset\CC^n\to [-\infty,+\infty)$ is plurisubharmonic if and only if
   $\varphi$ is strongly upper semi-continuous and satisfies the multiple coarse $L^2$-estimate property.
\end{theorem}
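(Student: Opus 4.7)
The forward direction is classical: plurisubharmonic functions are strongly upper semi-continuous, and H\"{o}rmander's $L^2$ existence theorem supplies the multiple coarse $L^2$-estimate property with constants $C_m\equiv 1$.

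For the reverse direction, my plan is first to establish an ``extension at almost every point'' analogue of Theorem \ref{Thm:OT ext} in the single-point case ($k=n$), but adapted to the multiple coarse regime. The $L^2$-optimality hypothesis was used in the proof of Theorem \ref{Thm:OT ext} only to solve a single $\dbar$-equation with an optimal weighted estimate; replacing this step by the multiple coarse $L^2$-estimate (with weight $e^{-m\varphi}$ on the right-hand side and constant $C_m$), the same construction should produce, for almost every $z\in D$ with $\varphi(z)>-\infty$ and any holomorphic cylinder $z+P\Subset D$, a holomorphic function $F_m\in\calO(z+P)$ with $F_m(z)=1$ satisfying
\begin{equation*}
\int_{z+P}|F_m|^2\,e^{-m\varphi}\,d\lambda_n \leq C'_m\,e^{-m\varphi(z)},
\end{equation*}
where $C'_m$ depends on $m$ and on the geometry of $P$ but still obeys $\log C'_m/m\to 0$ as $m\to\infty$.

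With this extension in hand, the mean value inequality for $\varphi$ follows from Jensen's inequality. Since $\log|F_m|^2$ is plurisubharmonic on $z+P$ and $F_m(z)=1$, the mean value property on holomorphic cylinders gives $\frac{1}{\lambda_n(z+P)}\int_{z+P}\log|F_m|^2\,d\lambda_n\geq 0$. Applying the concavity of $\log$ to the displayed estimate then yields
\begin{equation*}
\log\bigl(C'_m/\lambda_n(z+P)\bigr)-m\varphi(z) \geq \frac{1}{\lambda_n(z+P)}\int_{z+P}\log|F_m|^2\,d\lambda_n - \frac{m}{\lambda_n(z+P)}\int_{z+P}\varphi\,d\lambda_n,
\end{equation*}
and therefore
\begin{equation*}
\varphi(z) \leq \frac{1}{\lambda_n(z+P)}\int_{z+P}\varphi\,d\lambda_n + \frac{\log\bigl(C'_m/\lambda_n(z+P)\bigr)}{m}.
\end{equation*}
Letting $m\to\infty$ produces the mean value inequality at almost every $z\in D$ with $\varphi(z)>-\infty$; at points where $\varphi(z)=-\infty$ the inequality is automatic. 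Strong upper semi-continuity together with Lemma \ref{lem susc charpsh} then concludes that $\varphi$ is plurisubharmonic.

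The main obstacle is carrying over the proof of Theorem \ref{Thm:OT ext} from the $L^2$-optimal setting to the multiple coarse regime. One must check that the minimal-solution construction of Step Two, the Lebesgue-type differentiation in Step Three (now applied to $e^{-m\varphi}$-weighted integrals), and the constant-tracking in Steps Four and Five all remain valid when $\varphi$ is replaced by $m\varphi$ in the right-hand weights, while the final constants grow at most subexponentially in $m$ so that $\log C'_m/m\to 0$ is preserved. Additional care is needed to verify that the relevant densities are well-defined almost everywhere with respect to the weight $e^{-m\varphi}$ for each $m$; this implicitly forces $\varphi>-\infty$ almost everywhere in each cylinder where the extension is produced, which is precisely what is required for the Jensen step to make sense.
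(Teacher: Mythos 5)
Your proposal follows essentially the same route as the paper: the single-point ($k=n$) case of Theorem \ref{Thm:OT ext}, rerun with the multiple coarse $L^2$-estimate in place of $L^2$-optimality so as to produce for almost every $z$ an extension with $F_m(z)=1$ and constant $C'_m$ still satisfying $\log C'_m/m\to 0$, followed by Jensen's inequality and Lemma \ref{lem susc charpsh}. The adaptation of the extension argument that you flag as the main obstacle is likewise left implicit in the paper (which simply invokes Theorem \ref{Thm:OT ext}), and your Jensen computation agrees with the paper's.
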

\begin{proof}
  It suffices to prove the ``if" part. Since  $\varphi$   satisfies the multiple coarse $L^2$-estimate property, by Theorem \ref{Thm:OT ext}, there is a full-measure subset $A$ of $D$ such that for any $z\in A$,  and any holomorphic cylinder $z+P\subset D$, there is a holomorphic function $f\in\calO(z+P)$ such that $f(z)=1$    with the estimate:
   \begin{equation*}
    \int_{z+P}|f|^2e^{-m\varphi} d\lambda_n
   \leq
   C e^{-m\varphi(z)},
  \end{equation*}
    where   $C$ is a constant only depending on $n$ and ${\rm diam} P$.
  Then we have $$\varphi(z)\le \frac{\log C}{m}-\frac{\lambda_n(P)}{m}-\frac{1}{m}\log\left(\frac{1}{\lambda_n(P)}\int_{z+P}|f|^2e^{-m\varphi} d\lambda_n\right).$$
    Use Jensen's inequality and let $m\to +\infty$, then we get
    \begin{align*}\varphi(z)&\le \lim_{m\to+\infty}-\frac{1}{m}\cdot\frac{1}{\lambda_n(P)}\int_{z+P}\log(|f|^2e^{-m\varphi}) d\lambda_n\\
    &\le \frac{1}{\lambda_n(P)}\int_{z+P}\varphi d\lambda_n+\lim_{m\to+\infty}-\frac{1}{m}\cdot\frac{1}{\lambda_n(P)}\int_{z+P}\log|f|^2 d\lambda_n\\
    &\le \frac{1}{\lambda_n(P)}\int_{z+P}\varphi d\lambda_n+\lim_{m\to+\infty}-\frac{\log|f(z)|^2 }{m}\\
    &\le \frac{1}{\lambda_n(P)}\int_{z+P}\varphi d\lambda_n
    \end{align*}
    where the third inequality follows from the plurisubharmonicity of $\log|f|^2$.

    Since $\varphi$ is strongly upper semi-continuous, by Lemma \ref{lem susc charpsh}, we obtain that $\varphi$ is plurisubharmonic.
\end{proof}

\subsection{Skoda's integrability theroem}
As another application of single-point case in Theorem \ref{Thm:OT ext},  following the language of \cite[Theorem 9.17, Corollary 9.18]{Boucksom L2note}, we can obtain Skoda's integrability theorem for strongly upper semi-continuous  $L^2$-optimal functions.

\begin{theorem}[=Theorem \ref{thm Skoda}]
  Let $\varphi$ be a strongly upper semi-continuous  $L^2$-optimal function on a domain $D\subset\CC^n$. Assume that $\calI(\varphi)\not\equiv 0$. If $\nu(\varphi,x)<2$ for some $x\in D$, then $\calI(\varphi)_x=\calO_x$.
\end{theorem}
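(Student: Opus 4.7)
The plan is to adapt Boucksom's Ohsawa-Takegoshi-based proof of Skoda's integrability (\cite[Theorem 9.17, Corollary 9.18]{Boucksom L2note}) to our setting, using the single-point case $k = n$ of Theorem \ref{Thm:OT ext} in place of the classical extension theorem. The goal is to show $1 \in \calI(\varphi)_x$, i.e., that $e^{-\varphi}$ is locally integrable near $x$.

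After localizing to a small ball $B$ around $x$ (WLOG $x = 0$), on which $\varphi$ remains $L^2$-optimal by Lemma \ref{lem h^}, Lemma \ref{lem coherent} and Remark \ref{rmk coherent} give that $\supp(\calO/\calI(\varphi))$ is a proper analytic subset of $D$ and $\varphi > -\infty$ almost everywhere on $B$. For almost every $y \in B$ with $\varphi(y) > -\infty$, Theorem \ref{Thm:OT ext} applied with $k = n$, $z'' = y$, $f = 1$ produces $F_y \in \calO(B)$ with $F_y(y) = 1$ and
\begin{equation*}
\int_B \frac{|F_y|^2 e^{-\varphi}}{|w - y|^{2n}(\log|w - y|^2)^2}\, d\lambda_n \leq C\, e^{-\varphi(y)}.
\end{equation*}
Since the density $|w - y|^{2n}(\log|w - y|^2)^2$ is uniformly bounded above on the bounded ball $B$, we may read off $F_y \in H^0(B, \calI(\varphi))$, and in particular $\calI(\varphi)_y = \calO_y$ at every such good point.

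The Lelong number hypothesis $\nu(\varphi, 0) < 2$ is used to extend this conclusion to $y = 0$. Picking $c \in (\nu(\varphi, 0), 2)$, and combining strong upper semi-continuity with the Lebesgue-type differentiation of Section \ref{Sec:Lebesgue}, one finds a sequence of good points $y_j \to 0$ along which $e^{-\varphi(y_j)} = O(|y_j|^{-c})$. Restricting the OT estimate to $B(0, |y_j|/2)$, where the weight $|w-y_j|^{-2n}(\log|w-y_j|^2)^{-2}$ is comparable to $|y_j|^{-2n}(\log|y_j|)^{-2}$, yields
\begin{equation*}
\int_{B(0, |y_j|/2)} |F_{y_j}|^2 e^{-\varphi}\, d\lambda_n \leq C'\, |y_j|^{2n - c}(\log|y_j|)^2,
\end{equation*}
which controls the local mass of $F_{y_j}$ near $0$ since $2n - c > 0$. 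Together with the global bound $\int_B |F_{y_j}|^2 e^{-\varphi}\,d\lambda_n \leq C'' e^{-\varphi(y_j)}$, a Montel-compactness argument, after a suitable renormalization if $\varphi(0) = -\infty$, extracts a subsequential locally uniform limit $F \in \calO(B)$, and Fatou's lemma applied to the weighted OT estimate confirms $F \in H^0(B, \calI(\varphi))$.

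The main obstacle is to arrange $F(0) \neq 0$, whence $\calI(\varphi)_0 = \calO_0$. When $\varphi(0) > -\infty$, this is immediate: the bound $e^{-\varphi(y_j)} \leq C$ along the chosen sequence gives uniform $L^2$-bounds on $F_{y_j}$, hence local uniform convergence, and $F_{y_j}(y_j) = 1$ together with $y_j \to 0$ forces $F(0) = 1$. The delicate case is $\varphi(0) = -\infty$, where the $L^2$-bound on $F_{y_j}$ degenerates and the naive limit produces $F_{y_j}(0) \to 0$; here one must renormalize the $F_{y_j}$ so that the resulting sequence retains a nontrivial value at $0$. Exploiting the positive gap $2 - c > 0$ via a careful decomposition of the OT integral into annular regions around $0$ and around $y_j$, matched against the Lelong-number-controlled growth of $e^{-\varphi(y_j)}$, is the core of the argument and is where the hypothesis $\nu(\varphi, x) < 2$ is used decisively.
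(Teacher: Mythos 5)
Your proposal follows the right general strategy (apply the single-point case of Theorem \ref{Thm:OT ext} at good points near $x$ and use $\nu(\varphi,x)<2$), but it has a genuine gap exactly where you yourself locate ``the core of the argument'': you never actually produce a section of $\calI(\varphi)$ that is nonzero at $x$. In the only interesting case, where $\varphi$ is very singular at $x$ (e.g.\ $\varphi(x)=-\infty$), the extensions $F_{y_j}$ normalized by $F_{y_j}(y_j)=1$ satisfy only $\int_B|F_{y_j}|^2e^{-\varphi}\,d\lambda_n\le Ce^{-\varphi(y_j)}$ with $e^{-\varphi(y_j)}\to+\infty$; the renormalization $G_j=e^{\varphi(y_j)/2}F_{y_j}$ needed to apply Montel forces $G_j(y_j)=e^{\varphi(y_j)/2}\to 0$, so the subsequential limit may vanish at $x$ or be identically zero. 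The annular decomposition you gesture at does not rule this out: the bound $\int_{B(0,|y_j|/2)}|F_{y_j}|^2e^{-\varphi}\le C|y_j|^{2n-c}(\log|y_j|)^2$ is an \emph{upper} bound on local mass, which is entirely consistent with $F(0)=0$, whereas what is needed is a \emph{lower} bound at $x$ that survives the limit. Nothing in the sketch supplies it, so the proof is incomplete at its decisive step.

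The paper closes this gap with a different device, following Boucksom: it considers the Bergman kernel $K(z)=\sum_j|\sigma_j(z)|^2$ of $\calH^2(U,\varphi)$. Applying the single-point extension at almost every $z$ with datum $C^{-1/2}e^{\varphi(z)/2}$ gives a norm-one element of the Bergman space with prescribed value, hence $\log K\ge\varphi-\log C$ almost everywhere; continuity of $K$ together with strong upper semi-continuity of $\varphi$ upgrades this to the pointwise comparison $\nu(\log K,x)\le\nu(\varphi,x)<2$. On the other hand, since the series is locally dominated by finitely many terms, $\nu(\log K,x)=\min_{j}2\,\mathrm{ord}_x\sigma_j$ is an even integer, hence equals $0$, so some $\sigma_j(x)\neq0$ and $\calI(\varphi)_x=\calO_x$. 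This parity argument is precisely what replaces the limit-of-extensions step you left open; without it (or an equivalent quantitative lower bound at $x$), the hypothesis $\nu(\varphi,x)<2$ is not actually exploited to completion in your argument.
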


\begin{proof}
Let $U\Subset D$ be a Stein neighbourhood of $x$.
since $\varphi$ is $L^2$-optimal and $\calI(\varphi)\not\equiv 0$, we know that the Bergman space $\calH^2(U, \varphi)\neq0$.  Let $(\sigma_j)$ be a complete orthonormal basis of $\calH^2(U,\varphi)$. Consider the linear continuous functional ${\rm{ev}}_z(f):f\mapsto f(z)$ on $\calH^2(U,\varphi)$. By the Riesz representation theorem, we have $$\sup_{\|f\|=1}|f(z)|^2=\sum _{j=1}^{+\infty} |\sigma_j(z)|^2.$$ Since $\varphi$ is locally bounded from above, the $L^2$ topology is actually stronger than the topology of uniform convergence on compact subsets of $U$. It follows that the series $\sum|\sigma_j|^2$ converges uniformly on compact subsets of $U$ and that its sum is real analytic. Moreover,  we know that there is a neighbourhood $ V\subset U$ of $x$ and $j_0,C_0>0$ such that
$\sum_{j=1}^{+\infty} |\sigma_j(z)|^2\le C_0\sum_{j=1}^{j_0} |\sigma_j(z)|^2$ on $V$. Then we have $$\nu(\log\sum_{j=1}^{+\infty} |\sigma_j(z)|^2, x)=\nu(\log\sum_{j=1}^{j_0} |\sigma_j(z)|^2,x)=\min_{1\le j\le j_0}\{2{\rm{ord}}_x \sigma_j\}.$$
 In particular, the  Lelong number of $\log\sum_{j=1}^{+\infty} |\sigma_j(z)|^2$ at $x$ is an even integer.

  In addition, by Theorem \ref{Thm:OT ext}, for almost every $z\in U$, there is a holomorphic function $F\in H^0(U,\varphi)$ such that $$\int_U|f|^2e^{-\varphi}d\lambda_n\le C|f(z)|^2e^{-\varphi(z)}.$$
  Since  $\calI(\varphi)\not\equiv 0$, it follows from Remark \ref{rmk coherent} that $\varphi>-\infty$ almost everywhere. Hence for almost every $z\in U$, we can take $f(z)=C^{-\frac{1}{2}}e^{\frac{1}{2}\varphi(z)}$, then $\|f\|\le1$. Since  $\varphi$ is strongly upper semi-continuous, we have
  $$\log\sup_{\|f\|=1}|f(z)|^2\ge \varphi -\log C.$$
  Hence $$\nu(\log(\sum_{j=1}^{+\infty} |\sigma_j(z)|^2), x)=\nu(\log\sup_{\|f\|=1}|f(z)|^2,z)\le\nu(\varphi,z).$$

  Now $\nu(\varphi,x)<2$, then $\min_{1\le j\le j_0}\{2{\rm{ord}}_x \sigma_j\}=0$, which means that there is a $\sigma_j\in H^0(U,\varphi)$ with $\sigma_j(x)\neq0$ for some $1\le j\le j_0$. Therefore, $\calI(\varphi)_x=\calO_x$.
\end{proof}

\subsection{Strong openness property}

In this section, we prove the strong openness property of  multiplier ideal sheaves by modifying  Guan-Zhou's proof \cite{GZsoc} of Demailly's strong openness conjecture and follows Lempert's language \cite{Lempert17}.

\begin{theorem}[=Theorem \ref{thm SOC}]
  Let $\varphi$ be an upper semi-continuous and $L^2$-optimal function on a domain $D\subset\CC^n$. Let $\{\varphi_j\}_{j\ge1}$ be upper semi-continuous  and $L^2$-optimal functions  increasingly converging to $\varphi$ on $D$. Assume that $\calI(\varphi_1)\not\equiv 0$. Then $\calI(\varphi)=\bigcup_j\calI(\varphi_j)$.
\end{theorem}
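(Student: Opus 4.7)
The plan is to adapt Guan-Zhou's proof of Demailly's strong openness conjecture \cite{GZsoc} to the $L^2$-optimal setting, using Theorem \ref{Thm:OT ext} in place of the classical Ohsawa-Takegoshi extension. The inclusion $\bigcup_j \calI(\varphi_j) \subset \calI(\varphi)$ is immediate from $\varphi_j \le \varphi$, which also nests the sheaves $\calI(\varphi_j)$ increasingly in $j$; note that $\varphi$ is itself automatically $L^2$-optimal by the remark following the statement. For the reverse, fix $f \in \calI(\varphi)_x$, shrink to a Stein neighborhood $U \Subset D$ of $x$ with $\int_U |f|^2 e^{-\varphi}\, d\lambda_n < \infty$, and seek some $j$ with $f \in \calI(\varphi_j)_x$.

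I would argue by induction on $d := \dim_x \supp(\calO / \calI(\varphi_1))$, which is analytic by Lemma \ref{lem coherent} and contains $\supp(\calO / \calI(\varphi_j))$ for every $j$. The base case $d = -\infty$ (i.e.\ $\calI(\varphi_1)_x = \calO_x$) is immediate since then $f \in \calI(\varphi_1)_x$. For the inductive step, choose coordinates such that $U = U' \times D'$ with $D' \subset \CC^k$ bounded, $k \ge 1$, and $\supp(\calO / \calI(\varphi_1)) \cap U \subset H \times D'$ for a hypersurface $H \subset U'$; a generic projection arranges that the slice-support dimension is strictly less than $d$. By Lemma \ref{pro nak res nak} together with Proposition \ref{prop Lebesgue hyperplane}, almost every slice $U' \times \{z''\}$ satisfies: $\varphi|_{U' \times \{z''\}}$ and each $\varphi_j|_{U' \times \{z''\}}$ are $L^2$-optimal, the restrictions $\varphi_j|_{U' \times \{z''\}}$ increase pointwise to $\varphi|_{U' \times \{z''\}}$, and $f|_{U' \times \{z''\}}$ is holomorphic with finite weighted $L^2$ norm. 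Fix such a slice near $x$; the inductive hypothesis produces some $j_0$ with $f|_{U' \times \{z''\}} \in \calI(\varphi_{j_0}|_{U' \times \{z''\}})$. Applying Theorem \ref{Thm:OT ext} with weight $\varphi_{j_0}$ (whose support condition holds because $\calI(\varphi_1) \subset \calI(\varphi_{j_0})$) produces $F \in \calO(U' \times D')$ with $F|_{U' \times \{z''\}} = f|_{U' \times \{z''\}}$ and
\begin{equation*}
\int_{U' \times D'} \frac{|F|^2 e^{-\varphi_{j_0}}}{|w'' - z''|^{2k}(\log|w'' - z''|^2)^2}\, d\lambda_n(w) \le C \int_{U' \times \{z''\}} |f|^2 e^{-\varphi_{j_0}}\, d\lambda_{n-k}(w') < \infty,
\end{equation*}
so $F \in \calI(\varphi_{j_0})_x$ in a neighborhood of $x$.

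Since $f - F$ vanishes on $U' \times \{z''\}$, we may write $f - F = \sum_{i=1}^{k}(w''_i - z''_i) g_i$ with $g_i$ holomorphic, and iterate the extension construction on each $g_i$ to obtain a series $f = \sum_\ell F_\ell$ locally at $x$ in which every $F_\ell$ lies in some $\calI(\varphi_{j_\ell})_x$; monotonicity of $\calI(\varphi_j)$ in $j$ then implies $f \in \calI(\varphi_{j^\ast})_x$ for a single $j^\ast$. The principal obstacle is to make this iterative reconstruction converge in one fixed weighted $L^2$ norm $L^2(e^{-\varphi_{j^\ast}})$: because the constant $C$ in Theorem \ref{Thm:OT ext} is not the sharp Ohsawa-Takegoshi constant, each step loses a definite factor, which must be absorbed either by the vanishing factors $|w'' - z''|$ produced at each restriction or by the strict drop in the support dimension at each iteration. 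The bookkeeping from \cite{GZsoc} and \cite{Lempert17} handles precisely this balance, and the hypothesis $\calI(\varphi_1) \not\equiv 0$ guarantees that the induction actually terminates after finitely many steps.
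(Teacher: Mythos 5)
Your opening moves (restricting to almost every slice via Lemma \ref{pro nak res nak} and Proposition \ref{prop Lebesgue hyperplane}, applying the induction hypothesis on the slice, and extending with Theorem \ref{Thm:OT ext}) are in the same spirit as the paper, but your concluding step contains a genuine gap that the paper is specifically structured to avoid. You propose to reconstruct $f$ as an infinite series $f=\sum_\ell F_\ell$ by iterating ``extend, subtract, divide by $(w''_i-z''_i)$,'' and you yourself identify the obstacle: each iteration multiplies the estimate by the non-sharp constant $C$ of Theorem \ref{Thm:OT ext}, and nothing in the construction supplies a compensating uniform contraction (the gain from the factors $|w''-z''|$ is not a fixed ratio less than $1/C$ on a fixed neighborhood of $x$). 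Deferring to ``the bookkeeping from \cite{GZsoc} and \cite{Lempert17}'' does not close this, because that bookkeeping is \emph{not} a norm-convergent series argument: the Guan--Zhou/Lempert conclusion is algebraic. One performs a \emph{single} extension per slice, obtains $f\in I+(z_k)\cdot\calO$ for the module $I=\bigcup_j\calI(\varphi_j)$, iterates finitely to get $f\in\bigcap_m(I+\frakm^m)$, and concludes $f\in I$ by the Krull intersection theorem (this is what makes a non-sharp constant harmless). Moreover, even granting convergence of your series, a sum of terms $F_\ell\in\calI(\varphi_{j_\ell})_x$ with $j_\ell\to\infty$ need not lie in any single $\calI(\varphi_{j^\ast})_x$ without uniform estimates against one fixed weight, so ``monotonicity'' alone does not produce $j^\ast$.

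The paper's actual route packages all of this into Lempert's criterion (Lemma \ref{lem lempert}): $f_0\in\bigcup_j\calI(\varphi_j)_0$ if and only if $\liminf_{S\ni\xi\to0}|\xi|\cdot\|f|_{H_{k,\xi}}\|=0$ for the coordinate hyperplanes, where $\|\cdot\|$ is the infimum over $j$ of the slice norms. After a log resolution (Lemmas \ref{lem soc mod} and \ref{lem modification}) reduces to $\supp(\calO/\calI(\varphi_1))\subset\{z_1\cdots z_n=0\}$, the visible proof is induction on the ambient dimension $n$ plus Fubini and the monotone convergence theorem to verify the $\liminf$ condition; the extension theorem and the Krull-type conclusion live inside the proof of Lemma \ref{lem lempert}. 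You should also note that your induction on $d=\dim_x\supp(\calO/\calI(\varphi_1))$ with a ``generic projection'' is not what the paper does and would require justifying that generic slices both reduce the support dimension \emph{and} land in the full-measure set where Lemma \ref{pro nak res nak} applies; the paper's normal-crossings reduction sidesteps this.
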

\begin{proof}

Let $H\subset \CC^n$ be a complex hyperplane, $W\subset H\cap D$  relatively open and $f$ a measurable function on $W$. Define \begin{align*}
\|f\|^2=\inf_j \int_W |f|^2e^{-\varphi_j}d\lambda_{n-1}\in [0,+\infty]
\end{align*}
 Then we have
\begin{lemma}[{\cite[Lemma 2.1]{Lempert17}}]\label{lem lempert}
The germ $f_0\in \calO_0$ belongs to $\bigcup_j \calI(\varphi_j)_0$ if and only if for any $\DD^{n}_{0}(r)$ with $r$ small enough and hyperplanes $H_k:=\{z\in\CC^n; z_k=0\}$ for $1\le k\le n$, there is a subset $S\subset \DD_0^1(r)$ of full measure such that
	\begin{align*}
	\underset{ S\ni\xi\to0}{\liminf}~~ |\xi| \cdot\|f|_{V\cap H_{k,\xi}}\|=0,
	\end{align*}
	where $H_{k,\xi}:=\{z\in\CC^n; z_k =\xi\}$.	
\end{lemma}	
\begin{remark}
  Here we present two minor modifications to Lempert's original formulation. First, we allow for the presence of a zero-measure set $S$, since  such sets do not affect integration when we use Fubini's theorem. Second, it suffices to consider only the $n$ hyperplanes $H_k(1\le k\le n)$, as Lempert's proof requires only that for any analytic disc through the $0$, there exists at least one hyperplane not containing it.
\end{remark}

Since strong openness property is local, it is sufficient to show that  $\calI(\varphi)_0=\bigcup_{j=1}^{+\infty}\calE(h_j)_0$.
We  prove the theorem by induction on $n=\dim D$.
It  is obvious for  the case $n=0$.
Assume the statement is right for $n-1$, then consider the $n$-dimensional case.

By Lemma \ref{lem soc mod} and Lemma \ref{lem modification} , shrinking $D$ if necessary, we may assume that $D=\DD^{n}_{0}(r_0)$ and $\supp(\calO/\calI(e^{-\varphi_1}))$ is contained in $\{z_1\cdot\cdots\cdot z_n=0\}$.
To apply the induction hypothesis we have to verify whether the restrictions $\varphi_j|_{D\cap H_{k,\xi}}$ for each $1\le k\le n$, any $j\in\ZZ_+$ and almost every $\xi\in\CC$ is upper semi-continuous and $L^2$-optimal. Owing to Lemma \ref{pro nak res nak}, it is true.

If $f\in \calI(\varphi)_0$,  then for any $\DD^{n}_{0}(r)$ with $r$ small enough,  $\int_{\DD^{n}_{0}(r) } |f|^2e^{-\varphi}d\lambda_{n}<+\infty$.
Then Fubini's theorem guarantees that there is  $S\subset\DD^1_0(r)$ of full measure such that
$$\int_{\DD^{n}_{0}(r)\cap H_{k,\xi}} |f|^2e^{-\varphi}d\lambda_{n-1}<+\infty\quad
 \text{ for }~ \xi\in S,$$
and 
\begin{equation}\label{ccc}
\liminf_{S\ni \xi\to 0} |\xi|^2 \int_{\DD^{n}_{0}(r)\cap H_{k,\xi}} |f|^2e^{-\varphi}d\lambda_{n-1}=0.
\end{equation}
The induction hypothesis implies that for each $\xi\in S$ there is a $j_\xi$ such that $\int_{\DD^{n}_{0}(r)\cap H_{k,\xi}} |f|^2e^{-\varphi_{j_\xi}}d\lambda_{n-1}<+\infty$,
and by the monotone convergence theorem we have
$$\| f|_{\DD^{n}_{0}(r)\cap H_{k,\xi}}\|^2=\int_{\DD^{n}_{0}(r)\cap H_{k,\xi}} |f|^2e^{-\varphi}d\lambda_{n-1}.$$
 It follows from  \eqref{ccc}  that
 \begin{equation}
\liminf_{S\ni \xi\to 0} |\xi|\cdot\| f|_{\DD^{n}_{0}(r)\cap H_{k,\xi}}\|=0.
\end{equation}
 Then by Lemma \ref{lem lempert}, $f_0\in \bigcup_j\calI(\varphi_j)_0$ indeed.
\end{proof}


\begin{thebibliography}{00}

\bibitem{Blocki13}
Z. Blocki,
\textit{Suita conjecture and the Ohsawa-Takegoshi extension theorem}, Invent. Math. {\bf 193}(1) (2013), 149--158.

\bibitem{Boucksom L2note}
S. Boucksom,
\textit{Singularities of plurisubharmonic functions and multiplier ideals}, (2023). Available at
 \url{http://sebastien.boucksom.perso.math.cnrs.fr/notes/L2.pdf}.

\bibitem{chenboyong}
    {B. Chen},
 \textit{A simple proof of the Ohsawa-Takegoshi extension theorem}, (2011)
  \url{https://arxiv.org/abs/1105.2430}.


\bibitem{Demailly-note2000}
{J.-P. Demailly}, \textit{Multiplier ideal sheaves and analytic methods in algebraic geometry},
     In {School on {V}anishing {T}heorems and {E}ffective {R}esults in {A}lgebraic {G}eometry}, { ICTP Lect. Notes}, {\bf 6} (2001), 1--148.


\bibitem{D12a}
J. P. Demailly,
\textit{Complex analytic and differential geometry}, (2012). Available at
 \url{https://www-fourier.ujf-grenoble.fr/~demailly/manuscripts/agbook.pdf}.

\bibitem{D12b}
J. P. Demailly,
\textit{Analytic methods in algebraic geometry}, International Press, Somerville, MA;
Higher Education Press, Beijing (2012).

\bibitem{DK2001}
     {J.-P. Demailly}, {J. Koll{\'a}r}, \textit{Semi-continuity of complex singularity exponents and
    {K}{\"a}hler-{E}instein metrics on {F}ano orbifolds},
  { Ann. Sci. \'Ecole Norm. Sup.(4)}
    {\bf 34}(4) (2001), 525--556.

\bibitem{DNW21} F. Deng, J. Ning, Z. Wang, \textit{Characterizations of plurisubharmonic functions}, Sci. China. Math. {\bf 64}(9) (2021), 1959--1970.

\bibitem{DNWZ22} F. Deng, J. Ning, Z. Wang, X. Zhou, \textit{Positivity of holomorphic vector bundles in terms of $L^p$-estimates for $\overline{\partial}$}, Math. Ann. {\bf 385} (2023), 575--607.

\bibitem{DWZZ24} F. Deng, Z. Wang, L. Zhang, X. Zhou, \textit{New characterizations of plurisubharmonic functions and positivity of direct image sheaves}, Amer. J. Math. {\bf 146}(3) (2024), 751--768.

\bibitem{GZext}
Q. Guan, X. Zhou,
\textit{ A solution of an $L^2$ extension problem with an optimal estimate and applications}, Ann. of Math. (2) {\bf 181}(3) (2015), 1139–1208.

\bibitem{GZsoc}
Q. Guan, X. Zhou,
\textit{A proof of Demailly's strong openness conjecture}, Ann. of Math. (2) {\bf 182}(2) (2015), 605--616.

\bibitem{Hein01}
J. Heinonen, \textit{Lectures on analysis on metric spaces}, Universitext. Springer-Verlag, New York, (2001), x+140 pp.

\bibitem{Hor65}
	L.~H\"ormander,	\textit{$L^{2}$ estimates and existence theorems for the {$\bar \partial
		$}\ operator}, {  Acta Math.}, {\bf 113} (1965), 89--152.





\bibitem{Lempert17}
    L. Lempert, \textit{Modules of square integrable holomorphic germs},
      {Trends Math.} (2017), 311--333.


\bibitem{LXYZ21}
Z. Liu, B. Xiao, H. Yang, X. Zhou, \textit{Multiplier submodule sheaves and a problem of Lempert}, (2024) \url{https://arxiv.org/abs/2111.13452v2}.

\bibitem{LZ24}
Z. Liu, X. Zhang, \textit{Skoda's $L^2$ division for $L^2$-optimal pairs}, (2024)
\url{https://arxiv.org/abs/2411.15670}.


\bibitem{Nadel}
{A. M. Nadel},
\textit{Multiplier ideal sheaves and K\"ahler-Einstein metrics of positive scalar curvature},
Ann. of Math. (2) {\bf132}(3) (1990), 549--596.

 \bibitem{OT87} T. Ohsawa, K. Takegoshi, \textit{On the extension of $L^2$ holomorphic functions}, Math. Z. {\bf 195}(2) (1987) 197--204.

\bibitem{Skoda72}
    {H. Skoda}, \textit{
    Sous-ensembles analytiques d'ordre fini ou infini dans $\CC^n$},
     { Bull. Soc. Math. France}
    {\bf 100} (1972), 353--408.

 \bibitem{ZZsiulem}
   X. Zhou, L. Zhu,
  \textit{A generalized Siu's lemma}, { Math. Res. Lett.} {\bf 24}(6) (2017), 1897--1913.

 

\end{thebibliography}
\end{document}